\newtheorem{theorem}{Theorem}[section]
\newtheorem{proposition}[theorem]{Proposition}
\newtheorem{lemma}[theorem]{Lemma}
\newtheorem{definition}[theorem]{Definition}
\newtheorem{remark}[theorem]{Remark}
\newtheorem{corollary}[theorem]{Corollary}
\newcommand\indices[2]{\footnotesize{\left\vert \begin{array}{l} 
\hspace{-6pt}  #1 \atop \hspace{-6pt} #2 \end{array} \right.}}
\newcommand\cP{\mathcal P}
\def\balpha{\bar \alpha}
\def\bX{\bar X}
\def\tX{\tilde X}
\def\Ib{{\mathcal I}^b}
\def\Is{{\mathcal I}^{\sigma}}
\def\If{{\mathcal I}^f}
\def\Ig{{\mathcal I}^g}
\def\Ibp{{\mathcal I}^{b,\prime}}
\def\Isp{{\mathcal I}^{\sigma,\prime}}
\def\Ifp{{\mathcal I}^{f,\prime}}
\def\Igp{{\mathcal I}^{g,\prime}}
\newcommand\bA {\mathbb A}
\newcommand\EE {\mathbb E}
\newcommand\HH {\mathbb H}
\newcommand\RR {\mathbb R}
\newcommand\PP {\mathbb P}
\newcommand\bF {\mathbb F}
\newcommand\cA {\mathcal A}
\newcommand\cE {\mathcal E}
\newcommand\cF {\mathcal F}
\def\h{\hat}
\def\t{\tilde}
\def\o{\bar}
\def\u{\underline}
\newcommand\PXt {{\mathbb P}_{X_t}}
\newcommand\PXT {{\mathbb P}_{X_T}}
\begin{document}

\title[FBSDEs and McKean Vlasov]{Forward-Backward Stochastic Differential Equations and Controlled McKean Vlasov Dynamics}

\author{Ren\'e Carmona}
\address{ORFE, Bendheim Center for Finance, Princeton University,
Princeton, NJ  08544, USA.}
\email{rcarmona@princeton.edu}
\thanks{Partially supported  by NSF: DMS-0806591}

\author{Fran\c{c}ois Delarue}
\address{Laboratoire Jean-Alexandre Dieudonn\'e, 
Universit\'e de Nice Sophia-Antipolis, 
Parc Valrose, 
06108 Cedex 02, Nice, FRANCE}
\email{Francois.Delarue@unice.fr}

\subjclass[2000]{Primary }

\keywords{}

\date{June 10, 2011}

\begin{abstract}
The purpose of this paper is to provide a detailed probabilistic analysis of the optimal control of nonlinear stochastic dynamical systems
of the McKean Vlasov type. Motivated by the recent interest in mean field games, we highlight the connection and the differences 
between the two sets of problems. We prove a new version of the stochastic maximum principle and give sufficient conditions for existence of an optimal
control. We also provide examples for which our sufficient conditions for existence of an optimal solution are satisfied. Finally we show that our solution to the control problem provides approximate equilibria for large stochastic games with mean field interactions.
\end{abstract}

\maketitle

\section{\textbf{Introduction}}
\label{se:intro}

The purpose of this paper is to provide a detailed probabilistic analysis of the optimal control of nonlinear stochastic dynamical systems
of the McKean-Vlasov type. The present study is motivated in part by a recent surge of interest in mean field games.

We prove a  version of the stochastic Pontryagin maximum principle that is tailor-made to McKean-Vlasov dynamics and give sufficient conditions for existence of an optimal
control. We also provide a class of examples for which our sufficient conditions for existence of an optimal solution are satisfied. Putting these conditions to work at the solution of an optimal control problem leads to the solution of a system of Forward Backward Stochastic Differential Equations (FBSDEs for short) where the marginal distributions of the solutions appear in the coefficients of the equations. We call these equations mean field FBSDEs, or FBSDEs of McKean-Vlasov type. To the best of our knowledge, these equations have not been studied before. A rather general existence result was recently proposed in \cite{CarmonaDelarue_ecp}, but one of the assumptions (boundedness of the coefficients with respect to the state variable) precludes applications to solvable models such that the Linear Quadratic (LQ for short) models. Here, we take advantage of the convexity of the underlying Hamiltonian 
and apply the so-called \emph{continuation method} exposed in \cite{PengWu}
in order to prove existence and uniqueness of the solution of the FBSDEs at hand, extending and refining the results of \cite{CarmonaDelarue_ecp} to the models considered in this paper.   The technical details are given in Section \ref{se:fbsde}.

Without the control $\alpha_t$ (see Eq. \eqref{fo:mkvsde} right below), stochastic differential equations of McKean-Vlasov type are associated to special nonlinear Partial Differential Equations (PDEs) put on a rigorous mathematical footing by Henry McKean Jr in \cite{McKean1}. See also \cite{McKean2,Sznitman,JourdainMeleardWoyczynski}. Existence and uniqueness results for these equations have been developed in order to provide effective equations for studying large systems, reducing the dimension and the complexity, at the cost of handling non Markovian dynamics depending upon the statistical distribution of the solution. In the same spirit, we show in Section   
\ref{se:chaos} that our solution of the optimal control of McKean-Vlasov stochastic differential equations provides strategies putting a large system of individual optimizers in an approximate equilibrium, the notion of equilibrium being defined appropriately. The proof is based on standard arguments from the theory of the propagation of chaos, see for example \cite{Sznitman,JourdainMeleardWoyczynski}. The identification of approximate equilibriums in \textit{feedback form}
 requires strong regularity properties of the decoupling field of the FBSDE. They are proved in Section \ref{se:fbsde}. 

\section{\textbf{Probabilistic Set-Up of McKean-Vlasov Equations}}
\label{se:mkv}

In what follows, we assume that $W=(W_t)_{0\le t\le T}$ is an $m$-dimensional standard Wiener process defined on a probability space $(\Omega,\cF,\PP)$ and $\bF=(\cF_t)_{0\le t\le T}$ is its natural filtration possibly augmented 
with an independent $\sigma$-algebra ${\mathcal F}_{0}$.  For each random variable/vector or stochastic process $X$, we denote
by $\PP_X$ the law (alternatively called the distribution) of $X$. 

The stochastic dynamics of interest in this paper are given by a stochastic process
$ X=(X_t)_{0\le t\le T}$ satisfying a nonlinear stochastic differential equation of the form
\begin{equation}
\label{fo:mkvsde}
dX_t=b(t,X_t,\PXt, \alpha_t)dt+\sigma(t,X_t,\PXt,\alpha_t)dW_t, \qquad\qquad 0\le t\le T,
\end{equation}
where the drift and diffusion coefficient of the state $X_t$ of the system are given by the pair of deterministic functions $(b,\sigma):[0,T]\times\RR^d\times \cP_2(\RR^d)\times A\rightarrow \RR^{d} \times \RR^{d \times m}$ and $ \alpha=(\alpha_t)_{0\le t\le T}$ is a progressively measurable process with values in a measurable space $(A,\cA)$.
Typically, $A$ will be an open subset of an Euclidean space $\RR^k$ and $\cA$ the $\sigma$-field induced by the Borel $\sigma$-field of this Euclidean space. 

Also, for each measurable space $(E,\cE)$, we use the notation $\cP(E)$ for the space of probability measures on $(E,\cE)$, assuming that the $\sigma$-field $\cE$ on which the measures are defined is understood. When $E$ is a metric or a normed space (most often $\RR^d$
), we denote by $\cP_p(E)$ the subspace of $\cP(E)$ of the probability measures of order $p$, namely those probability measures which integrate the $p$-th power of the distance to a fixed point (whose choice is irrelevant in the definition of $\cP_p(E)$).
The term \emph{nonlinear} does not refer to the fact that the coefficients $b$ and $\sigma$ could be nonlinear functions of  $x$ but instead to the fact that they depend not only on the value of the unknown process $X_t$ at time $t$, but also on its marginal distribution $\PP_{X_t}$.
We shall assume that the drift coefficient $b$ and the volatility $\sigma$ satisfy the following assumptions.
\begin{itemize}
\item[(A1)] \hskip 6pt For each $x\in\RR^d$, $\mu\in\cP_2(\RR^d)$ and $\alpha\in A$, the function 
$[0,T] \ni t \mapsto (b,\sigma)(t,x,\mu,\alpha) \in \RR^d \times \RR^{d \times m}$ is square integrable;
\item[(A2)] \hskip 6pt  $\exists c>0$, $\forall t\in[0,T]$,  $\forall \alpha\in A$,  $\forall x,x'\in\RR^d$,  $\forall \mu,\mu'\in\cP_2(\RR^d)$,
$$
|b(t,x,\mu,\alpha)-b(t,x',\mu',\alpha)| +  |\sigma(t,x,\mu,\alpha)-\sigma(t,x',\mu',\alpha)|\le c \bigl[ 
|x-x'|+W_2(\mu,\mu') \bigr];
$$
\end{itemize} 
where $W_2(\mu,\mu')$ denotes the 2-Wasserstein distance. For a general $p>1$, the p-Wasserstein distance $W_p(\mu,\mu')$ is defined on $\cP_p(E)$ by:
$$
W_p(\mu,\mu')=\inf\left\{\left[\int_{E \times E} |x-y|^p\pi(dx,dy)\right]^{1/p};\;\pi\in\cP_2(E\times E) \text{ with marginals } \mu \text{ and } \mu'\right\}.
$$
Notice that if $X$ and $X'$ are random variables of order $2$, then  $W_2(\PP_X,\PP_{X'})\le [\EE|X-X'|^2]^{1/2}$.

The set $\bA$ of so-called \textit{admissible} control processes $ \alpha$ is defined as the set of $A$-valued progressively measurable processes $\alpha \in \HH^{2,k}$, where $\HH^{2,n}$ denotes the Hilbert space
$$
\HH^{2,n}:=\Big\{Z\in\HH^{0,n};\;\EE\int_0^T|Z_s|^2ds<+\infty\Big\}
$$
with $\HH^{0,n}$ standing for the collection of all $\RR^n$-valued progressively measurable processes on $[0,T]$. By (A.1) and (A.2), any $\alpha \in \bA$ satisfies 
$$
\EE\int_0^T[|b(t,0,\delta_0,\alpha_t)|^2+|\sigma(t,0,\delta_0,\alpha_t)|^2]dt<+\infty. 
$$
Together with the Lipschitz assumption (A.2), this guarantees that, for any $\alpha\in\bA$, there exists a unique solution $ X= X^{\alpha}$ of 
\eqref{fo:mkvsde}, and that moreover this solution satisfies
\begin{equation}
\label{fo:moment}
\EE\sup_{0\le t\le T}|X_t|^p<+ \infty
\end{equation}
for every $p \in [1,2]$. See e.g. \cite{Sznitman,JourdainMeleardWoyczynski} for a proof.
\vskip 2pt
The stochastic optimization problem which we consider is to minimize the objective function
\begin{equation}
\label{fo:mkvobjective}
J(\alpha)=\EE\left\{\int_0^Tf(t,X_t,\PXt, \alpha_t)dt+g(X_T,\PXT)\right\}  ,
\end{equation}
over the set $\bA$ of admissible control processes $\alpha=(\alpha_t)_{0\le t\le T}$. The \emph{running cost} function $f$ is a real valued deterministic function on $[0,T]\times\RR^d\times \cP_2(\RR^d)\times A$, and the  \emph{terminal cost} function $g$ is a real valued deterministic function on $\RR^d\times \cP_2(\RR^d)$. Assumptions on the cost functions $f$ and $g$ will be spelled out later.

\vskip 4pt
The McKean-Vlasov dynamics posited in \eqref{fo:mkvsde} are sometimes called of \emph{mean field  type}. This is justified by the fact that the uncontrolled stochastic differential equations of the McKean-Vlasov type first appeared in the infinite particle limit of large systems of particles with mean field interactions. See \cite{McKean2, Sznitman,JourdainMeleardWoyczynski} for example. 
Typically, the dynamics of such a system of $N$ particles are given by a system of $N$ stochastic differential equations of the form
$$
dX^i_t=b^i(t,X_t^1,\cdots,X_t^N)dt +\sigma^i(t,X_t^1,\cdots,X_t^N)dW^i_t,
$$ 
where the $W^i$'s are $N$ independent standard Wiener processes in $\RR^{m}$,  the $\sigma^i$'s are $N$ 
deterministic functions from $[0,T]\times\RR^{N\times d}$ into the space of $d\times m$ real matrices, and the $b^i$'s are $N$ 
deterministic functions from $[0,T]\times\RR^{N\times d}$ into $\RR^{d}$. The interaction between the particles is said to be of mean field type when the functions $b^i$ and $\sigma^i$ are of the form
$$
b^i(t, x_1,\cdots,x_N)=b(t, x_i,\bar\mu^N_{\u x}),\quad\text{and}\qquad \sigma^i(t, x_1,\cdots,x_N)=\sigma(t, x_i,\bar\mu^N_{\u x}),\quad i=1,\cdots,N,
$$
for some deterministic function $b$ from $[0,T]\times\RR^{d}\times \cP_1(\RR^d)$ into $\RR^{d}$, and $\sigma$ from $[0,T]\times\RR^{d}\times \cP_1(\RR^d)$ into the space of $d\times m$ real matrices. Here, for each $N$-tuple $\u x=(x_1,\cdots,x_N)$, we denote by $\bar{\mu}^N_{\u x}$, or $\bar{\mu}^N$ when no confusion is possible, the empirical probability measure defined by:
\begin{equation}
\label{fo:empirical}
\bar{\mu}^N(dx')=\frac1N\sum_{j=1}^N\delta_{x_j}(dx')
\end{equation}
and for each $x$ by $\delta_x$ the unit point mass (Dirac) measure at $x$. We shall come back to this formulation of the problem 
in the last section of the paper when we use results from the propagation of chaos to construct approximate equilibriums.
\vspace{4pt}

We emphasize that the optimization problem \eqref{fo:mkvobjective} differs from the optimization problem encountered in the theory of mean field games. Differences between these optimization problems are discussed in \cite{CarmonaDelarueLaChapelle}. When handling mean field games, the optimization of the cost functional \eqref{fo:mkvobjective} is performed for a fixed flow of probability measures. In other words, the argument $(\PP_{X_{t}})_{0 \leq t \leq T}$ in \eqref{fo:mkvsde}
and \eqref{fo:mkvobjective}
is kept fixed as $\alpha$ varies and the controlled processes are driven by the same flow of measures, which is not necessarily the flow of distributions of the process $(X_{t})_{0 \leq t \leq T}$ but only an input. Solving the corresponding mean field game then consists in identifying a flow of probability measures, that is an input, such that the optimal states have precisely the input as flow of statistical distributions. As highlighted in Section \ref{se:chaos}, the optimization problem for controlled McKean-Vlasov dynamics as we consider here, also reads as a limit problem as $N$ tends to infinity, of the optimal states of $N$ interacting players or agents using a common policy.

\subsection*{Useful Notations} Given a function $h : \RR^d \rightarrow \RR$ and a vector $p \in \RR^d$, we will denote by 
$\partial h(x) \cdot p$ the action of the gradient of $h$ onto $p$. When $h : \RR^d \rightarrow \RR^{\ell}$, we will also denote by 
$\partial h(x) \cdot p$ the action of the gradient of $h$ onto $p$, the resulting quantity being an element of $\RR^{\ell}$. When  $h : \RR^d \rightarrow \RR^{\ell}$ and $p \in \RR^{\ell}$, we will denote by $\partial h(x) \odot p$ the element of $\RR^d$ defined by 
$\partial_{x} [ h(x) \cdot p]$ where $\cdot$ is here understood as the inner product in $\RR^{\ell}$.

\section{\textbf{Preliminaries}}
\label{se:preliminaries}
We now introduce the notation and concepts needed for the analysis of the stochastic optimization problem associated with the control of McKean-Vlasov dynamics.

\subsection{Differentiability and Convexity of Functions of Measures}
There are many notions of differentiability for functions defined on spaces of measures, and recent progress in the theory of optimal transportation
have put several forms in the limelight. See for example \cite{AmbroglioGigliSavare,Villani} for expos\'es of these geometric approaches in textbook form.
However, the notion of differentiability which we find convenient for the type of stochastic control problem studied in this paper is slightly different. It is more of a functional analytic nature. We believe that it was introduced by P.L. Lions in his lectures at the \emph{Coll\`ege de France}. See \cite{Cardaliaguet} for a readable account. This notion of differentiability is based on the \emph{lifting} of functions $\cP_2(\RR^d)\ni\mu\mapsto H(\mu)$ 
into functions $\t H$  defined on the Hilbert space $L^2(\t\Omega;\RR^d)$ over some probability space $(\t\Omega,\t\cF,\t\PP)$ by setting $\t H(\tX)=H(\t\PP_{\tX})$ for $\t X \in L^2(\t \Omega;\RR^d)$, $\tilde{\Omega}$ being a Polish space and $\t \PP$ an atomless measure. Then, a function $H$
is said to be differentiable  at $\mu_0\in\cP_2(\RR^d)$ if there exists a random variable $\tX_0$ with law $\mu_0$, in other words 
satisfying $\t\PP_{\tX_0}=\mu_0$, such that the lifted function $\t H$ is Fr\'echet differentiable at $\tX_0$. 
Whenever this is the case, the Fr\'echet derivative of $\t H$ at $\t X_0$ can be viewed as an element of $L^2(\t\Omega;\RR^d)$ by identifying $L^2(\t\Omega;\RR^d)$ and its dual. It then turns out that its distribution depends only upon the law $\mu_0$ and not upon the particular random variable $\t X_0$ having distribution $\mu_0$. See Section 6 in \cite{Cardaliaguet} for details. This Fr\'echet derivative $[D\t H](\t X_0)$ is called the representation of the derivative of $H$ at $\mu_0$ along the variable $\t X_{0}$.  Since it is viewed as an element of $L^2(\t\Omega;\RR^d)$, by definition, 
\begin{equation}
\label{eq:1:1:1}
H(\mu)=H(\mu_0)+[D \t H](\t X_0)\cdot (\t X-\t X_0) +o(\|\t X-\t X_0\|_{2})
\end{equation}
whenever $\tX$ and $\tX_0$ are random variables with distributions $\mu$ and $\mu_0$ respectively, the dot product being here the $L^2$- inner product over $(\t \Omega,\t {\mathcal F},\t \PP)$ and $\| \cdot \|_{2}$ the associated norm. It is shown in \cite{Cardaliaguet} that, as a random variable, it is of the form $\t h(\t X_0)$ for some deterministic measurable function $\t h : \RR^d \rightarrow \RR^d$, which is uniquely defined $\mu_0$-almost everywhere on $\RR^d$. The equivalence class of 
$\t h$ in $L^2(\RR^d,\mu_{0})$ being uniquely defined, we can denote it by $\partial_{\mu} H(\mu_{0})$ (or $\partial
 H(\mu_{0})$ when no confusion is possible): We will call $\partial_\mu H(\mu_0)$ the derivative of $H$ at $\mu_{0}$
 and we will 
 often identify it
  with a function $\partial_{\mu} H(\mu_{0})( \, \cdot \, ) : \RR^d \ni x \mapsto \partial_{\mu} H(\mu_{0})(x)
\in \RR^d$ (or by $\partial H(\mu_{0})( \, \cdot \, )$ when no confusion is possible). Notice that $\partial_{\mu} H(\mu_{0})$ allows us to express $[D \t H](\t X_0)$ as a function of any random variable $\t X_{0}$ with distribution $\mu_0$, irrespective of where this random variable is defined. In particular, 
the differentiation formula
\eqref{eq:1:1:1} is somehow invariant by modification of the space $\tilde{\Omega}$ and of the variables $\t X_{0}$ and $\t X$ 
used for the representation of $H$,
in the sense that $[D \t H](\t X_0)$ always reads as $\partial_{\mu}H(\mu_{0})(\t X_{0})$, whatever the choices of 
$\t \Omega$, $\t X_{0}$ and $\t X$ are.
It is plain to see how this works when the function $H$ is of the form
\begin{equation}
\label{fo:Hofmu}
H(\mu)=\int_{\RR^d} h(x)\mu(dx)=\langle h,\mu\rangle
\end{equation}
for some scalar differentiable function $h$ defined on $\RR^d$. Indeed, in this case, $\t H(\t X)=\t\EE [h(\t X)]$ and $D\t H(\t X)\cdot \t Y=\t\EE[\partial h(\t X)\cdot \t Y]$ so that we can think of $\partial_\mu H(\mu)$ as the deterministic function $\partial h$. We will use this particular example to recover the Pontryagin principle originally derived in \cite{AndersonDjehiche} for scalar interations as a particular case of the general Pontryagin principle which we prove below. The example \eqref{fo:Hofmu} highlights the fact that this notion of differentiability is very different from the usual one. Indeed, given the fact that the function $H$ defined by \eqref{fo:Hofmu} is linear in the measure $\mu$ when viewed as an element of the dual of a function space, one should expect the derivative to be $h$ and NOT $h'$ ! The notion of differentiability used in this paper is best understood as differentiation of functions of limits of empirical measures (or linear combinations of Dirac point masses) in the directions of the atoms of the measures. We illustrate this fact in the next two propositions.

\begin{proposition}
\label{pr:diff_empir}
If $u$ is differentiable on ${\mathcal P}_{2}(\RR^d)$,  given any integer $N \geq 1$, we define the \textit{empirical} projection of $u$ onto $\RR^d$ by
\begin{equation*}
\bar{u}^N : (\RR^d)^N \ni \u x= (x_{1},\dots,x_{N}) \mapsto u \biggl( \frac{1}{N} \sum_{i=1}^N \delta_{x_{i}} \biggr).
\end{equation*}
Then, $\bar{u}^N$ is differentiable on $(\RR^d)^N$ and, for all $i \in \{1,\dots,N\}$,
\begin{equation*}
\partial_{x_{i}} \bar{u}^N(\u x) = 
\partial_{x_{i}} \bar{u}^N(x_{1},\dots,x_{N}) = \frac{1}{N} \partial u \biggl( \frac{1}{N} \sum_{j=1}^N \delta_{x_{j}}\biggr)(x_{i}).
\end{equation*}
\end{proposition}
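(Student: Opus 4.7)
The idea is to realize the empirical measure $\bar\mu^N_{\underline x}=\frac1N\sum_j\delta_{x_j}$ as the law of a simple $\tilde\Omega$-random variable and then read off the partial derivatives of $\bar u^N$ directly from the Fréchet differential of the lift $\tilde u$. Since $(\tilde\Omega,\tilde{\mathcal F},\tilde\PP)$ is Polish and atomless, we can fix once and for all a measurable partition $\tilde\Omega=A_1\cup\cdots\cup A_N$ with $\tilde\PP(A_j)=1/N$ for every $j$. Given $\underline x=(x_1,\dots,x_N)\in(\RR^d)^N$, set
\begin{equation*}
\tilde X_0:=\sum_{j=1}^N x_j\,\mathbf{1}_{A_j}\in L^2(\tilde\Omega;\RR^d),\qquad\text{so that }\tilde\PP_{\tilde X_0}=\bar\mu^N_{\underline x}.
\end{equation*}
Then $\tilde u(\tilde X_0)=u(\bar\mu^N_{\underline x})=\bar u^N(\underline x)$, so differentiability of $\bar u^N$ at $\underline x$ reduces to understanding how $\tilde u$ behaves under perturbations of $\tilde X_0$ that stay of the above piecewise-constant form.

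\textbf{From Fréchet derivative to partial derivatives.} For $\underline h=(h_1,\dots,h_N)\in(\RR^d)^N$, define
\begin{equation*}
\tilde X_{\underline h}:=\tilde X_0+\tilde H,\qquad \tilde H:=\sum_{j=1}^N h_j\,\mathbf{1}_{A_j}.
\end{equation*}
Then $\tilde\PP_{\tilde X_{\underline h}}=\bar\mu^N_{\underline x+\underline h}$, and $\|\tilde H\|_2^2=\frac1N\sum_{j=1}^N|h_j|^2$, so $\|\tilde H\|_2\le|\underline h|$ where $|\cdot|$ is the Euclidean norm on $(\RR^d)^N$. Applying the differentiation formula \eqref{eq:1:1:1} at $\mu_0=\bar\mu^N_{\underline x}$ with the representative $\tilde X_0$ gives
\begin{equation*}
\bar u^N(\underline x+\underline h)-\bar u^N(\underline x)
=\tilde\EE\bigl[\partial_\mu u(\bar\mu^N_{\underline x})(\tilde X_0)\cdot\tilde H\bigr]+o(\|\tilde H\|_2).
\end{equation*}
Since $\tilde X_0(\omega)=x_j$ for $\omega\in A_j$ and $\tilde\PP(A_j)=1/N$, the expectation evaluates to
\begin{equation*}
\sum_{j=1}^N\frac1N\,\partial_\mu u(\bar\mu^N_{\underline x})(x_j)\cdot h_j.
\end{equation*}
Because $\|\tilde H\|_2\le|\underline h|$, the remainder is $o(|\underline h|)$, which is exactly the statement that $\bar u^N$ is (Fréchet) differentiable at $\underline x$ with partial derivatives $\partial_{x_i}\bar u^N(\underline x)=\frac1N\,\partial_\mu u(\bar\mu^N_{\underline x})(x_i)$.

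\textbf{Main difficulty and how to handle it.} The delicate point is that $\partial_\mu u(\bar\mu^N_{\underline x})$ is defined only $\bar\mu^N_{\underline x}$-almost everywhere, i.e.\ on the finite set $\{x_1,\dots,x_N\}$; one must check that the right-hand side of the formula does not depend on the choice of representative. This is automatic here because the support of $\bar\mu^N_{\underline x}$ is precisely $\{x_1,\dots,x_N\}$, so the values $\partial_\mu u(\bar\mu^N_{\underline x})(x_i)$ are literal atoms of positive $\bar\mu^N_{\underline x}$-mass and are thus unambiguously defined. Besides this point, one should also justify that working with the particular lift $\tilde X_0=\sum_jx_j\mathbf{1}_{A_j}$ is legitimate: this is guaranteed by the invariance of the differentiation formula \eqref{eq:1:1:1} under change of representative, as recalled in the excerpt.
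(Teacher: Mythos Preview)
Your proof is correct and takes essentially the same approach as the paper: the paper realizes $\bar\mu^N_{\underline x}$ as the law of $x_\vartheta$ for a uniform random index $\vartheta$ on $\{1,\dots,N\}$, which is exactly your construction $\tilde X_0=\sum_j x_j\mathbf{1}_{A_j}$ with $A_j=\{\vartheta=j\}$, and then reads off the derivative from the Fr\'echet expansion of $\tilde u$. Your discussion of the well-definedness of $\partial_\mu u(\bar\mu^N_{\underline x})(x_i)$ at the atoms is a nice additional remark that the paper leaves implicit.
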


\begin{proof} On $(\tilde{\Omega},\t {\mathcal F},\t \PP)$, consider a 
uniformly distributed
random variable $\vartheta$  over the set $\{1,\dots,N\}$. Then, for any fixed
$\u x = (x_{1},\dots,x_{N}) \in (\RR^d)^N$, $x_{\vartheta}$
is a random variable  having the distribution $\bar{\mu}^N = N^{-1} \sum_{i=1}^N \delta_{x_{i}}$. In particular, with the same notation as above for $\t u$, 
\begin{equation*}
\bar{u}^N(\u x)
=
\bar{u}^N(x_{1},\dots,x_{N}) = \tilde u (x_{\vartheta}). 
\end{equation*}
Therefore, for $\u h = (h_{1},\dots,h_{N}) \in (\RR^d)^N$, 
\begin{equation*}
\bar{u}^N( \u x + \u h) = \tilde{u}(x_{\vartheta}+h_{\vartheta})
 = \tilde{u}(x_{\vartheta}) + D\tilde{u}(x_{\vartheta}) \cdot h_{\vartheta} + o(\vert h \vert),
\end{equation*}
the dot product being here the $L^2$- inner product over $(\t \Omega,\t {\mathcal F},\t \PP)$, from which we deduce
\begin{equation*}
\bar{u}^N(\u x + \u h)
= \bar{u}^N(\u x) + \frac{1}{N} \sum_{i=1}^N \partial\, u(\o\mu^N)(x_i) h_{i} + o(\vert h \vert),
\end{equation*}
which is the desired result.
\end{proof}

\vspace{5pt}

The mapping $D\tilde{u} : L^2(\t \Omega;\RR^d) \rightarrow L^2(\t \Omega;\RR^d)$ is said to be Lipchitz continuous if there exists a constant $C>0$ such that, for any square integrable random variables $\t X$ and $\t Y$ in $L^2(\t \Omega;\RR^d)$, it holds $\|D\tilde{u}(\t X)-D\tilde{u}(\t Y)\|_{2}\le C\|\t X-\t Y\|_{2}$. In such a case, the Lipschitz property can be transferred onto $L^2(\Omega)$ and then rewritten as:
\begin{equation}
\label{fo:Lip}
{\mathbb E} \bigl[ \vert \partial u(\PP_{X})(X) - \partial u(\PP_{Y})(Y) \vert^2 \bigr] \leq 
C^2 \EE\bigl[ \vert X-Y \vert^2 \bigr],
\end{equation}
for any square integrable random variables $X$ and $Y$ in $L^2(\Omega;\RR^d)$.
From our discussion of the construction of $\partial u$, notice that, for each $\mu$, $\partial u(\mu)(\,\cdot\,)$ is only uniquely defined $\mu$-almost everywhere. The following lemma (the proof of which is deferred to Subsection \ref{subse:proof:le:5})
then says that, in the current framework, there is a Lipschitz continuous version of $\partial u(\mu)(\,\cdot\,)$:
\begin{lemma}
\label{le:5}
Given a family of Borel-measurable mappings $(v(\mu)(\, \cdot \,) : \RR^d \rightarrow \RR^d)_{\mu \in {\mathcal P}_{2}(\RR^d)}$ indexed by the probability measures of order $2$ on $\RR^d$, assume that there exists a constant $C$ such that, for any square integrable random variables 
$\xi$ and $\xi'$ in $L^2( \Omega;\RR^d)$, it holds 
\begin{equation}
\label{eq:28:2:5}
{\mathbb E} \bigl[ \vert v(\PP_{\xi})(\xi) - v(\PP_{\xi'})(\xi') \vert^2 \bigr] \leq C^2 {\mathbb E} \bigl[ \vert \xi - \xi' \vert^2
\bigr].
\end{equation}
Then, for each $\mu\in\cP_2(\RR^d)$, one can redefine $v(\mu)(\,\cdot\,)$ on a $\mu$-negligeable set in such a way that:
\begin{equation*}
\forall x,x' \in \RR^d, \quad \vert v(\mu)(x) - v(\mu)(x') \vert \leq C \vert x-x' \vert,
\end{equation*}
for the same $C$ as in \eqref{eq:28:2:5}. 
\end{lemma}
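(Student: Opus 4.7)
Fix $\mu \in {\mathcal P}_{2}(\RR^d)$, and set $\Phi(x,y) := \vert v(\mu)(x) - v(\mu)(y)\vert^2 - C^2 \vert x-y\vert^2$. This is a symmetric Borel function, it vanishes on the diagonal, and it belongs to $L^1(\mu \otimes \mu)$ (as $v(\mu) \in L^2(\mu)$, which one sees by applying \eqref{eq:28:2:5} with $\xi' \equiv 0$ and using that $\mu$ has finite second moment). Applying \eqref{eq:28:2:5} to pairs $(\xi,\xi')$ both of distribution $\mu$ yields
\begin{equation*}
\int_{\RR^d \times \RR^d} \Phi \, d\pi \leq 0
\end{equation*}
for every coupling $\pi$ of $\mu$ with itself. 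The plan is to upgrade this integrated bound into a pointwise Lipschitz estimate, and then to invoke Kirszbraun's extension theorem.

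The key construction uses ``swap'' couplings. Given disjoint Borel sets $A, B \subset \RR^d$ with $\mu(A), \mu(B) > 0$ and a parameter $\alpha \in (0, \min(\mu(A), \mu(B))]$, consider
\begin{equation*}
\pi := \frac{\alpha}{\mu(A) \mu(B)} \bigl( \mu|_A \otimes \mu|_B + \mu|_B \otimes \mu|_A \bigr) + \int_{\RR^d} \delta_{(x,x)} \, d\mu^{\mathrm{res}}(x),
\end{equation*}
where $\mu^{\mathrm{res}} := \mu - (\alpha/\mu(A)) \mu|_A - (\alpha/\mu(B)) \mu|_B \geq 0$. A direct check shows that $\pi$ is a probability measure with both marginals equal to $\mu$. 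Since $\Phi$ vanishes on the diagonal, the ``identity'' part contributes nothing to $\int \Phi \, d\pi$, and the symmetry of $\Phi$ reduces $\int \Phi \, d\pi \leq 0$ to
\begin{equation*}
\int_{A \times B} \Phi \, d(\mu \otimes \mu) \leq 0.
\end{equation*}

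I would then upgrade this rectangle inequality to a pointwise bound. Assume for contradiction that $\{\Phi > 0\}$ has positive $\mu \otimes \mu$-measure; pick a Lebesgue point $(x_{0}, y_{0}) \in \{\Phi > 0\}$ of $\Phi$ with respect to the Radon measure $\mu \otimes \mu$ and the basis of cubes in $\RR^{2d}$. Since $\{\Phi > 0\}$ is disjoint from the diagonal, $x_{0} \neq y_{0}$, so for small enough cubes $A \ni x_{0}$ and $B \ni y_{0}$ of the same side length, $A \cap B = \emptyset$, and Lebesgue differentiation gives
\begin{equation*}
\frac{1}{(\mu \otimes \mu)(A \times B)} \int_{A \times B} \Phi \, d(\mu \otimes \mu) \longrightarrow \Phi(x_{0}, y_{0}) > 0
\end{equation*}
as the side length shrinks, contradicting the rectangle inequality. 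Hence $\vert v(\mu)(x) - v(\mu)(y)\vert \leq C \vert x-y\vert$ for $(\mu \otimes \mu)$-almost every $(x,y)$.

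To finish, Fubini provides a Borel set $E \subset \mathrm{supp}(\mu)$ of full $\mu$-measure such that for every $x \in E$ the Lipschitz bound holds for $\mu$-a.e.\ $y$. For any $x, x' \in E$, the intersection of the two full-measure ``good'' sets has full $\mu$-measure and, because $x \in \mathrm{supp}(\mu)$, meets every neighborhood of $x$; picking a sequence $y_{n} \to x$ in this intersection and applying the triangle inequality yields $\vert v(\mu)(x) - v(\mu)(x')\vert \leq C\vert x-x'\vert$. Thus $v(\mu)$ is $C$-Lipschitz on $E$, and Kirszbraun's extension theorem provides a $C$-Lipschitz extension to $\RR^d$ which then coincides with $v(\mu)$ outside a $\mu$-null set. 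The main technical subtlety is the design of the swap coupling with the identity component on $\mu^{\mathrm{res}}$: it is precisely what isolates the rectangle integral cleanly, by exploiting that $\Phi$ vanishes on the diagonal.
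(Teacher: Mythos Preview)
Your argument is correct and takes a genuinely different route from the paper's proof. A couple of minor points worth making explicit: the passage from the hypothesis to $\int \Phi\,d\pi\le 0$ for an arbitrary self-coupling $\pi$ presupposes that any such $\pi$ can be realised as the joint law of a pair $(\xi,\xi')$ on $(\Omega,\cF,\PP)$; this uses that the underlying space is atomless (the paper makes the same implicit move when it freely chooses $(\Omega,\cF,\PP)=(0,1)^d\times\RR^d$). And for the Lebesgue-differentiation step, the cleanest justification is the dyadic one: products of level-$n$ dyadic cubes in $\RR^d$ are exactly the level-$n$ dyadic cubes in $\RR^{2d}$, and the martingale convergence theorem gives $\frac{1}{(\mu\otimes\mu)(Q)}\int_Q\Phi\,d(\mu\otimes\mu)\to\Phi$ $(\mu\otimes\mu)$-a.e., so the rectangle inequality forces $\Phi\le 0$ off the diagonal. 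With those clarifications, every step goes through.

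The paper's proof is considerably heavier. It first reduces to the case where $v$ is bounded and $\mu$ has a smooth strictly positive density with rapid decay, then builds a triangular (Knothe--Rosenblatt) transport $U:(0,1)^d\to\RR^d$ pushing the uniform law to $\mu$, mollifies $v(\mu)$ by convolution with Gaussians, and uses a one-coordinate \emph{reflection} coupling in the cube $(0,1)^d$ to bound each directional derivative of the mollified function pointwise. Several layers of approximation then remove the smoothness and boundedness assumptions. Your approach bypasses all of this: the swap coupling plus the diagonal identity part isolates the product integral directly, and Kirszbraun handles the extension in one stroke with the exact constant $C$. What your argument gains is economy and the avoidance of any regularity assumption on $\mu$ along the way; what the paper's argument gains is an explicit pointwise derivative bound on the mollified approximants, which is not needed here but can be useful when one wants quantitative information about $\partial_\mu$-derivatives themselves.
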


By \eqref{fo:Lip}, we can use Lemma \ref{le:5} in order to define $\partial u(\mu)(x)$ for every $\mu$ and every $x$ while preserving the Lipschitz property in the variable $x$.
From now on, we shall use this version of $\partial  u$. So, if $\mu,\nu\in\cP_2(\RR^d)$ and $X$ and $Y$ are random variables such that $\PP_X=\mu$ and $\PP_Y=\nu$, we have:
\begin{equation*}
\begin{split}
\EE \bigl[ \vert \partial u(\mu)(X) - \partial u(\nu)(X) \vert^2 \bigr]\le &
2\bigg(\EE \bigl[ \vert \partial u(\mu)(X) - \partial u(\nu)(Y) \vert^2 \bigr] 
+\EE \bigl[ \vert \partial u(\nu)(Y) - \partial u(\nu)(X) \vert^2 \bigr]\bigg)
\\
\leq& 4C^2 \EE \bigl[ \vert Y - X \vert^2 \bigr],
\end{split}
\end{equation*}
where we used the Lipschitz property \eqref{fo:Lip} of the derivative together with the result of  Lemma \ref{le:5} 
applied to the function $\partial u(\nu)$.
Now, taking the infimum over all the couplings $(X,Y)$ with marginals $\mu$ and $\nu$, we obtain
\begin{equation*}
\inf_{X,\PP_X=\mu} \EE \bigl[ \vert \partial u(\mu)(X) - \partial u(\nu)(X) \vert^2 \bigr]\le 4C^2 W_{2}(\PP_{X},\PP_{Y})^2
\end{equation*}
and since the left-hand side depends only upon $\mu$ and not on $X$ as long as $\PP_X=\mu$, we get
\begin{equation}
\label{fo:est}
\EE \bigl[ \vert \partial u(\mu)(X) - \partial u(\nu)(X) \vert^2 \bigr]\leq 4C^2 W_{2}(\mu,\nu)^2.
\end{equation}
We will use the following consequence of this estimate:
 
\begin{proposition}
\label{prop:14:3:1}
Let $u$  be a differentiable function on ${\mathcal P}_{2}(\RR^d)$ with a Lipschitz derivative, and let $\mu\in\cP_2(\RR^d)$, and  $\u x=(x_{1},\dots,x_{N})\in (\RR^d)^N$ and $\u y=(y_{1},\dots,y_{N})\in (\RR^d)^N$. Then, with the same notation as in the statement of Proposition \ref{pr:diff_empir},  we have:
\begin{equation*}
\partial \bar{u}^N(\u x)\cdot(\u y - \u x) = \frac{1}{N} \sum_{i=1}^N 
 \partial u (\mu)(x_{i}) (y_{i} - x_{i})
+ {\mathcal O} \biggl[ W_{2}(\bar{\mu}_{N},\mu) \biggl( N^{-1} \sum_{i=1}^N \vert x_{i} - y_{i} \vert^2 \biggr)^{1/2} \biggr], 
\end{equation*}
the dot product being here the usual Euclidean inner product
and ${\mathcal O}$ standing for the Landau notation. 
\end{proposition}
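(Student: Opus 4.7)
The plan is to reduce the statement to estimate \reff{fo:est} via a direct application of Proposition \ref{pr:diff_empir} followed by Cauchy--Schwarz. I begin by writing out the left-hand side explicitly using the computation of the partial derivatives of the empirical projection. Namely, by Proposition \ref{pr:diff_empir},
\begin{equation*}
\partial \bar{u}^N(\u x) \cdot (\u y - \u x) = \sum_{i=1}^N \partial_{x_{i}} \bar u^N(\u x) \cdot (y_{i}-x_{i}) = \frac{1}{N} \sum_{i=1}^N \partial u(\bar \mu_{N})(x_{i}) \cdot (y_{i} - x_{i}),
\end{equation*}
where $\bar \mu_{N} = N^{-1} \sum_{j=1}^N \delta_{x_{j}}$. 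Subtracting and adding the target quantity produces the error
\begin{equation*}
R := \frac{1}{N} \sum_{i=1}^N \bigl[ \partial u(\bar \mu_{N})(x_{i}) - \partial u(\mu)(x_{i}) \bigr] \cdot (y_{i} - x_{i}),
\end{equation*}
so it suffices to show that $R = \mathcal O \bigl[ W_{2}(\bar \mu_{N},\mu) ( N^{-1} \sum_{i} |x_{i}-y_{i}|^2 )^{1/2} \bigr]$.

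Next, I apply the Cauchy--Schwarz inequality to get
\begin{equation*}
\vert R \vert \leq \biggl( \frac{1}{N} \sum_{i=1}^N \bigl\vert \partial u(\bar \mu_{N})(x_{i}) - \partial u(\mu)(x_{i}) \bigr\vert^2 \biggr)^{1/2} \biggl( \frac{1}{N} \sum_{i=1}^N \vert y_{i} - x_{i} \vert^2 \biggr)^{1/2}.
\end{equation*}
The second factor is exactly the one that appears in the claimed error term, so the work reduces to controlling the first factor by $W_{2}(\bar \mu_{N},\mu)$, up to a multiplicative constant.

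Here the key observation is that the first factor in the previous display is precisely an expectation against the empirical measure $\bar \mu_{N}$: if $X$ is any random variable with distribution $\bar \mu_{N}$, then
\begin{equation*}
\frac{1}{N} \sum_{i=1}^N \bigl\vert \partial u(\bar \mu_{N})(x_{i}) - \partial u(\mu)(x_{i}) \bigr\vert^2 = \EE \bigl[ \vert \partial u(\bar \mu_{N})(X) - \partial u(\mu)(X) \vert^2 \bigr].
\end{equation*}
Since $u$ has Lipschitz derivative, the right-hand side is bounded by $4 C^2 W_{2}(\bar \mu_{N},\mu)^2$ thanks to \reff{fo:est}. Combining the two estimates yields $\vert R \vert \leq 2C W_{2}(\bar \mu_{N},\mu) ( N^{-1} \sum_{i} |x_{i}-y_{i}|^2 )^{1/2}$, which is the desired bound.

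No single step is a genuine obstacle; the only subtle point is that the use of \reff{fo:est} requires the pointwise, Lipschitz-in-$x$ version of $\partial u(\mu)(\,\cdot\,)$ produced via Lemma \ref{le:5}, since the identity on the empirical mean above would otherwise be meaningless (a generic $\mu$-a.e.\ representative could fail to be defined on the atoms $x_{1},\ldots,x_{N}$ when $\mu$ and $\bar \mu_{N}$ are mutually singular). Once this convention is in force, the argument is a two-line combination of Proposition \ref{pr:diff_empir}, Cauchy--Schwarz, and estimate \reff{fo:est}.
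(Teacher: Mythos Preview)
Your proof is correct and follows essentially the same approach as the paper: expand via Proposition \ref{pr:diff_empir}, add and subtract the target sum, apply Cauchy--Schwarz, and recognize the resulting average as an expectation against $\bar\mu_N$ so that estimate \reff{fo:est} applies. The paper makes the last step concrete by writing $X=x_\vartheta$ with $\vartheta$ uniform on $\{1,\dots,N\}$, but this is exactly your abstract ``$X$ with distribution $\bar\mu_N$''; your additional remark about needing the Lipschitz version of $\partial u(\mu)(\cdot)$ from Lemma \ref{le:5} is a useful clarification that the paper leaves implicit.
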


\begin{proof}
Using Proposition \ref{pr:diff_empir}, we get:
\begin{equation*}
\begin{split}
\partial \bar{u}^N(\u x)\cdot(\u y - \u x)
&=\sum_{i=1}^N \partial _{x_i}\bar{u}^N(\u x)  (y_i-x_i)
\\
&=\frac1N\sum_{i=1}^N\partial u(\bar{\mu}^N)(x_i)  (y_i-x_i)
\\
&=\frac1N\sum_{i=1}^N \partial u(\mu)(x_i)  (y_i-x_i)
+\frac1N\sum_{i=1}^N[\partial u(\bar{\mu}^N)(x_i) - \partial u(\mu)(x_i)]  (y_i-x_i).
\end{split}
\end{equation*}
Now, by Cauchy-Schwarz' inequality,
\begin{equation*}
\begin{split}
&\bigg|\frac1N\sum_{i=1}^N[ \partial u(\bar{\mu}^N)(x_i) - \partial u(\mu)(x_i)]  (y_i-x_i) \bigg| 
\\
&\le \bigg(\frac1N\sum_{i=1}^N | \partial u(\bar{\mu}^N)(x_i) - \partial u(\mu)(x_i)|^2\bigg)^{1/2}
\bigg(\frac1N\sum_{i=1}^N|y_i-x_i|^2\bigg)^{1/2}
\\
&= \big({\t \EE}\bigl[ | \partial u(\bar{\mu}^N)(x_\vartheta) - \partial u(\mu)(x_\vartheta)|^2 \bigr]\big)^{1/2}
\bigg(\frac1N\sum_{i=1}^N|y_i-x_i|^2\bigg)^{1/2}
\\
&\le 2CW_2(\bar{\mu}^N,\mu)\bigg(\frac1N\sum_{i=1}^N|y_i-x_i|^2\bigg)^{1/2},
\end{split}
\end{equation*}
if we use the same notation for $\vartheta$ as in the proof of Proposition 
\ref{pr:diff_empir}
 and apply the estimate \eqref{fo:est} with $X=x_\vartheta$, $\mu=\bar{\mu}^N$ and $\nu=\mu$.
\end{proof}

\begin{remark}
\label{re:convergence}
We shall use the estimate of Proposition \ref{prop:14:3:1} when $x_i=X_i$ and the $X_i$'s are independent $\RR^d$-valued random variables with common distribution $\mu$. Whenever $\mu \in {\mathcal P}_{2}(\RR^d)$, the law of large numbers ensures that the Wasserstein distance between $\mu$ and the empirical measure $\bar{\mu}^N$ tends to $0$ a.s., that is 
\begin{equation*}
\PP \bigl( \lim_{n \rightarrow + \infty} W_{2}(\bar{\mu}^N,\mu) = 0 \bigr) = 1, 
\end{equation*}
see for example Section 10 in \cite{RachevRuschendorf}. Since we can find a constant $C >0$, independent of $N$, such that 
\begin{equation*}
W_{2}^2(\bar{\mu}^N,\mu) \leq C \bigl( 1 + \frac{1}{N} \sum_{i=1}^N \vert X_{i} \vert^2 \bigr),
\end{equation*}
we deduce from the law of large numbers again that the family $(W_{2}^2(\bar{\mu}^N,\mu))_{N \geq 1}$ is uniformly integrable, so that the convergence to $0$ also holds in the $L^2$ sense:
\begin{equation}
\label{fo:wassertsein empirical}
\lim_{n \rightarrow + \infty} \EE \bigl[ W_{2}^2(\bar{\mu}^N,\mu) \bigr] = 0.  
\end{equation}
Whenever $\int_{\RR^d}|x|^{d+5}\mu(dx)<\infty$, the rate of convergence can be specified. We indeed have the following standard estimate on the Wasserstein distance between $\mu$ and the empirical measure $\bar{\mu}^N$: 
\begin{equation}
\label{fo:horowitz}
{\mathbb E} \bigl[ W^2_{2}(\bar{\mu}^N,\mu) \bigr] \leq C N^{-2/(d+4)},
\end{equation}
for some constant $C>0$, 
see for example Section 10 in \cite{RachevRuschendorf}. 
Proposition \ref{prop:14:3:1} then says that, when $N$ is large, the gradient of $\bar{u}^N$ at the empirical sample $(X_{i})_{1 \leq i \leq N}$ is close to the sample 
$(\partial u(\mu)(X_{i}))_{1 \leq i \leq N}$, the accuracy of the approximation being specified in the $L^2({\Omega})$ norm by \eqref{fo:horowitz} when $\mu$ is sufficiently integrable.
\end{remark}

\subsection{Joint Differentiability and Convexity}
\label{subse:joint}
{\ }

\textit{Joint Differentiability.}
Below, we often consider functions $g :  \RR^n \times {\mathcal P}_{2}(\RR^d) \ni (x,\mu) \rightarrow  g(x,\mu) \in \RR$ depending on both an $n$-dimensional $x$  and a probability measure $\mu$. Joint differentiability is then defined according to the same procedure: $g$ is said to be jointly differentiable if the \emph{lifting} $\tilde{g} : \RR^n \times L^2(\t \Omega;\RR^d) \ni (x,\t X) \mapsto 
g(x,\t \PP_{\t X})$ is jointly differentiable. In such a case, we can define the partial derivatives in $x$ and $\mu$: they read
$\RR^d \times {\mathcal P}_{2}(\RR^d) \ni (x,\mu) \mapsto \partial_{x} g(x,\mu)$ and $ 
\RR^d \times {\mathcal P}_{2}(\RR^d) \ni (x,\mu) \mapsto \partial_{\mu} g(x,\mu)(\cdot) \in L^2(\RR^d,\mu)$ respectively. The partial Fr\'echet derivative of $\tilde{g}$ in the direction $\tX$ thus reads $L^2(\t \Omega;\RR^d) \ni (x,\tX) \mapsto D_{\tX} \t g(x,\tX) =
\partial_{\mu} g(x,\t \PP_{\t X})(\t X) \in L^2(\t \Omega;\RR^d)$.  

We often use the fact that joint continuous differentiability in the two arguments is equivalent with partial differentiability in each of the two arguments and joint continuity of the partial derivatives. Here, the joint continuity of $\partial_{x} g$ is understood as the joint continuity with respect to the Euclidean distance on $\RR^n$ and the Wasserstein distance on ${\mathcal P}_{2}(\RR^d)$. The joint continuity of $\partial_{\mu} g$ is understood as the joint continuity of the mapping
$(x,\tX) \mapsto \partial_{\mu} g(x,\t \PP_{\t X})(\t X)$ from $\RR^n \times L^2(\t \Omega;\RR^d)$ into $L^2(\t \Omega;\RR^d)$.   

When the partial derivatives of $g$ are assumed to be Lipschitz-continuous, we can benefit 
from Lemma \ref{le:5}. It says that, for any $(x,\mu)$, the \emph{representation} $\RR^d \ni x' \mapsto \partial_{\mu} g(x,\mu)(x')$ makes sense as a Lipschitz function in $x'$ and that an appropriate version of \eqref{fo:est} holds true. 
\vspace{4pt}

\textit{Convex Functions of Measures.}
We define a notion of convexity associated with this notion of differentiability.
A function $g$ on $\cP_2(\RR^d)$ which is differentiable in the above sense is said to be convex if for every $\mu$ and $\mu'$ in 
$\cP_2(\RR^d)$ we have
\begin{equation}
\label{fo:convexity:0}
g(\mu')-g(\mu) - \t\EE[\partial_\mu g( \mu)(\t X)\cdot (\t{X'}- \t X)]\ge 0
\end{equation}
whenever $\t X$ and $\t{X'}$ are square integrable random variables with distributions $\mu$ and $\mu'$ respectively.
Examples are given in Subsection \ref{subs:examples}.

More generally, a function $g$ on $\RR^n \times\cP_2(\RR^d)$ which is jointly differentiable in the above sense is said to be convex if for every $(x,\mu)$ and $(x',\mu')$ in $\RR^n \times\cP_2(\RR^d)$ we have
\begin{equation}
\label{fo:convexity}
g(x',\mu')-g(x,\mu)-\partial_xg(x,\mu)\cdot (x'- x) - \t\EE[\partial_\mu g( x, \mu)(\t X)\cdot (\t{X'}- \t X)]\ge 0
\end{equation}
whenever $\t X$ and $\t{X'}$ are square integrable random variables with distributions $\mu$ and $\mu'$ respectively.

\subsection{Proof of Lemma \ref{le:5}}
\label{subse:proof:le:5}

\textit{First Step.} 
We first consider the case $v$ bounded and assume that $\mu$ has a strictly positive continuous density $p$ on the whole $\RR^d$, $p$ and its derivatives being of exponential decay at the infinity. 
We then claim that there exists a continuously differentiable one-to-one function from $(0,1)^d$ onto  $\RR^d$ such that, whenever 
$\eta_{1},\dots,\eta_{d}$ are $d$ independent random variables, each of them being uniformly distributed on $(0,1)$,
 $U(\eta_{1},\dots,\eta_{d})$ has distribution $\mu$. It satisfies for any $(z_{1},\dots,z_{d}) \in (0,1)^d$
 \begin{equation*}
 \frac{\partial U_{i}}{\partial z_{i}}(z_{1},\dots,z_{d}) \not = 0, \quad
  \frac{\partial U_{j}}{\partial z_{i}}(z_{1},\dots,z_{d}) = 0, \quad 1 \leq i < j \leq d. 
 \end{equation*}
 The result is well-known when $d=1$. In such a case, $U$ is the inverse of the cumulative distribution function of $\mu$. In higher dimension, $U$ can be constructed by an induction argument on the dimension. Assume indeed that some $\hat{U}$ has been constructed for the first marginal distribution $\hat{\mu}$ of $\mu$ on $\RR^{d-1}$, that is for the push-forward of $\mu$ by the projection  
 mapping $\RR^d \ni (x_{1},\dots,x_{d}) \mapsto (x_{1},\dots,x_{d-1})$.  Given $(x_{1},\dots,x_{d-1}) \in \RR^{d-1}$, we then denote by $p(\cdot \vert x_{1},\dots,x_{d-1})$ the conditional density of $\mu$ given the $d-1$ first coordinates:
 \begin{equation*}
 p(x_{d} \vert x_{1},\dots,x_{d-1}) = \frac{p(x_{1},\dots,x_{d})}{\hat{p}(x_{1},\dots,x_{{d-1}})}, \quad 
 x_{1},\dots,x_{d-1} \in \RR^{d-1},
 \end{equation*}
 where $\hat{p}$ denotes the density of $\hat{\mu}$ (which is continuously differentiable and positive). We then denote by 
 $(0,1) \ni z_{d} \mapsto U_{d}(z_{d}\vert x_{1},\dots,x_{d-1})$ the inverse of the cumulative distribution function of the law of density $p(\, \cdot \, \vert x_{1},\dots,x_{d-1})$. It satisfies
 \begin{equation*}
 F_{d}\bigl(U_{d}(z_{d}\vert x_{1},\dots,x_{d-1}) \vert x_{1},\dots,x_{d-1}\bigr) = z_{d},
 \end{equation*} 
 with
 \begin{equation*}
 F_{d}(x_{d}\vert x_{1},\dots,x_{d-1}) = \int_{-\infty}^{x_{d}} p(y \vert x_{1},\dots,x_{d-1}) dy,
 \end{equation*}
 which is continuously differentiable in $(x_{1},\dots,x_{d})$ (using the exponential decay of the density at the infinity). 
By the implicit function theorem, the mapping $\RR^{d-1} \times (0,1) \ni (x_{1},\dots,x_{d-1},z_{d}) \mapsto 
U_{d}(z_{d} \vert x_{1},\dots,x_{d-1})$ is continuously differentiable. The partial derivative with respect to $z_{d}$ is given by 
\begin{equation*}
 \frac{\partial U_{d}}{\partial z_{d}}(z_{d} \vert x_{1},\dots,x_{d-1}) = \frac{1}{p( U_{d}(z_{d} \vert x_{1},\dots,x_{d-1}) \vert x_{1},\dots,x_{d-1})},
 \end{equation*}
 which is non-zero. We now let 
 \begin{equation*}
 U(z_{1},\dots,z_{d}) = \bigl( \hat{U}(z_{1},\dots,z_{d-1}), U_{d}(z_{d} \vert \hat{U}(z_{1},\dots,z_{d-1})) \bigr), \quad 
 z_{1},\dots,z_{d} \in (0,1)^d. 
 \end{equation*}
By construction, $U(\eta_{1},\dots,\eta_{d})$ has distribution $\mu$: $(\eta_{1},\dots,\eta_{d-1})$ has distribution $\hat{\mu}$ and the conditional law of $U_{d}(\eta_{1},\dots,\eta_{d})$ given $\eta_{1},\dots,\eta_{d-1}$ is the conditional law of $\mu$ given the $d-1$ first coordinates. It satisfies $[\partial U_{d}/\partial z_{d}](z_{1},\dots,z_{d}) > 0$
and $[\partial U_{i}/\partial z_{d}](z_{1},\dots,z_{d}) = 0$ for $i < d$. In particular, since $\hat{U}$ is assumed (by induction) to be injective and
$[\partial U_{d}/\partial z_{d}](z_{1},\dots,z_{d}) > 0$, $U$ must be injective as well. As the Jacobian matrix of $U$ is triangular with non-zero elements on the diagonal, it is invertible. By the global inversion theorem, $U$ is a diffeomorphism: the range of $U$ is the support of $\mu$, that is $\RR^d$. This proves that $U$ is one-to-one from $(0,1)^d$ onto $\RR^d$. 
\vspace{2pt}

\textit{Second Step.} We still consider the case $v$ bounded and assume that $\mu$ has a strictly positive continuous density $p$ on the whole $\RR^d$, $p$ and its derivatives being of exponential decay at the infinity. We will use the mapping $U$ constructed in the first step. 
For three random variables $\xi$, $\xi'$ and $G$ in $L^2(\Omega;\RR^d)$, the pair $(\xi,\xi')$ being independent of $G$, the random variables 
$\xi$ and $\xi'$ having the same distribution, and $G$ being ${\mathcal N}_{d}(0,I_{d})$ normally distributed, \eqref{eq:28:2:5} implies that, for any integer $n \geq 1$
\begin{equation*}
\EE \bigl[ \vert v\bigl(\xi+ n^{-1} G,\PP_{\xi + n^{-1} G}\bigr)
- v\bigl(\xi'+ n^{-1} G,\PP_{\xi + n^{-1} G}\bigr) \vert^2 \bigr] \leq C^2 \EE \bigl[ \vert \xi - \xi' \vert^2 \bigr].
\end{equation*}
In particular, setting 
\begin{equation*}
v_{n}(x) = \frac{n^d}{(2 \pi)^{d/2}}\int_{\RR^d} v\bigl(y,\PP_{\xi + n^{-1} G}\bigr) \exp \bigl( -n^2 \frac{ \vert x - y \vert^2}{2} \bigr) dy,
\end{equation*}
we have
\begin{equation}
\label{eq:25:3:2}
\EE \bigl[ \vert v_{n}(\xi) - v_{n}(\xi') \vert^2 \bigr] \leq C^2 \EE \bigl[ \vert \xi - \xi' \vert^2 \bigr].
\end{equation}
Notice that $v_{n}$ is infinitely differentiable with bounded derivatives. 

We now choose a specific coupling for $\xi$ and $\xi'$. Indeed, we know that for any $ \eta=(\eta_1,\dots,\eta_{d})$ and 
$\eta'=(\eta_{1}',\dots,\eta_{d}')$, with uniform distribution on $(0,1)^d$, $U( \eta)$ and $U(\eta')$ have the same distribution as $\xi$. Without any loss of generality, we then assume that the probability space $(\Omega,{\mathcal F},\PP)$ is given by 
$(0,1)^d \times \RR^d$ endowed with its Borel $\sigma$-algebra and the product of the Lebesgue measure on $(0,1)^d$ and 
of the Gaussian measure ${\mathcal N}_{d}(0,I_{d})$. The random variables $\eta$ and $G$ are then chosen as
the canonical mappings  
$\eta : (0,1)^d \times \RR^d \ni (z,y) \mapsto z$ and $G : (0,1)^d \times \RR^d \ni (z,y) \mapsto y$.

We then define $\eta'$ as a function of the variable $z \in (0,1)^d$ only. For a given $z^{0}=(z^{0}_{1},\dots,z^{0}_{d}) \in (0,1)^d$ and for $h$ small enough so that the open ball $B(z^0,h)$ of center $z^{0}$ and radius $h$ is included in $(0,1)^d$, we let:
\begin{equation*}
\eta'(z)=
\left\{ \begin{array}{ll}
z^0 + (  z^{0}_{d} - z_{d}) e_{d} &\quad \textrm{on} \ B(z^0,h),
\\
z, &\quad \textrm{outside} \ B(z^0,h),
\end{array}
\right.
\end{equation*}
where $e_{d}$ is the $d$th vector of the canonical basis. 
We rewrite \eqref{eq:25:3:2} as:
\begin{equation*}
\int_{(0,1)^d} \bigl\vert v_{n}\bigl(U(\eta(z))\bigr) - v_{n}\bigl(U(\eta'(z))\bigr) \bigr\vert^2 dz \leq 
C^2 \int_{(0,1)^d} \bigl\vert U(\eta(z)) -U(\eta'(z)) \bigr\vert^2 dz,
\end{equation*}
or equivalently:
\begin{equation}
\label{fo:50}
\begin{split}
&\int_{\vert r \vert < h} \bigl\vert v_{n}\bigl[U \bigl(z^{0} + r - 2 r_{d} e_{d} \bigr) \bigr]
 - v_{n}\bigl(U(z^{0} + r)\bigr) \bigr\vert^2 dr 
\\
&\hspace{15pt} \leq 
C^2 \int_{\vert r \vert < h} \bigl\vert U\bigl( z^{0}+ r - 2 r_{d} e_{d} \bigr)
-U(z^{0}+r) \bigr\vert^2 dr.
\end{split}
\end{equation}
Since $U$ is continuously differentiable, we have 
\begin{equation*}
v_{n}(U(z^{0}+r)) = v_{n}(U(z^{0})) + \partial v_{n}(U(z^{0}))  \cdot \bigl[ \partial U(z^0) \cdot r \bigr] + o(r),
\end{equation*}
where $\partial U(z^0)$ is a $d \times d$ matrix. 
We deduce
\begin{equation*}
\begin{split}
v_{n}\bigl[ U \bigl(z^{0}+ r - 2r_{d} e_{d}
\bigr)\bigr] - v_{n}(U(z^{0}+r))  &= - 2 \sum_{i=1}^d 
\frac{\partial v_{n}}{\partial x_{i}}(U(z^{0}))
\frac{\partial U_{i}}{\partial z_{d}}(z^{0}) r_{d}+ o(r)
\\
&= - 2 \frac{\partial v_{n}}{\partial x_{d}}(U(z^{0}))
\frac{\partial U_{d}}{\partial z_{d}}(z^{0}) r_{d}+ o(r),
\end{split}
\end{equation*}
since $\partial U_{i}/\partial z_{d} =0$ for $i \not = d$,
and
\begin{equation}
\label{fo:51}
\begin{split}
&\int_{\vert r \vert <h} \bigl\vert v_{n}\bigl[U \bigl(z^{0}+r - 2r_{d} e_{d}\bigr) \bigr] - v_{n}(U(z^{0}-r)) \bigr\vert^2 dr
\\
&= 4 \bigl\vert
\frac{\partial v_{n}}{\partial x_{d}}(U(z^{0})) 
\frac{\partial U_{d}}{\partial z_{d}}(z^{0}) \bigr\vert^2 \int_{\vert r \vert <h} r_{d}^2 dr + o(h^3).
\end{split}
\end{equation}
Similarly, 
\begin{equation}
\label{fo:52}
\int_{\vert r \vert < h} \bigl\vert U(z^{0}+r - 2r_{d} e_{d}) - U(z^{0}+r) \bigr\vert^2 dr
= 4 \bigl\vert 
\frac{\partial U_{d}}{\partial z_{d}}(z^{0}) \vert^2 \int_{\vert r \vert <h} r_{d}^2 dr + o(h^3).
\end{equation}
and putting together \eqref{fo:50},  \eqref{fo:51}, and \eqref{fo:52}, we obtain
\begin{equation*}
\bigl\vert \frac{\partial v_{n}}{\partial x_{d}}(U(z^{0})) \frac{\partial U_{d}}{\partial z_{d}}(z^{0}) \vert^2
 \leq 
C^2 
\bigl\vert \frac{\partial U_{d}}{\partial z_{d}}(z^{0}) \vert^2.
\end{equation*}
Since $[\partial U_{d}/\partial z_{d}](z^{0})$ is different from zero, we deduce that 
\begin{equation*}
\bigl\vert \frac{\partial v_{n}}{\partial x_{d}}(U(z^{0}))  \vert^2
 \leq C^2,
\end{equation*}
and since $U$ is 
a one-to-one mapping from $(0,1)^d$ onto $\RR^d$, and $z^0\in(0,1)^d$ is arbitrary, we conclude that
$\vert [\partial v_{n}/\partial x_{d}](x) \vert \leq C$, for any $x \in \RR^d$. By changing the basis used for the construction of $U$ (we used the canonical one but we could use any orthonormal basis), we have $\vert \nabla v_{n}(x) e \vert \leq C$ for any $x,e \in \RR^d$ with $\vert e \vert =1$. This proves that the functions 
$(v_{n})_{n \geq 1}$ are uniformly bounded and $C$-Lipschitz continuous. We then denote by $\hat{v}$
the limit of a subsequence converging for the topology of uniform convergence on compact subsets. For simplicity, we keep the index $n$ to denote the subsequence.  Assumption \eqref{eq:28:2:5} implies: 
\begin{equation*}
\EE \bigl[ \vert v_{n}(\xi) - v(\xi,\PP_{\xi}) \vert^2 \bigr] \leq 
\EE \bigl[ \vert v(\xi+n^{-1}G,\PP_{\xi+n^{-1}G}) - v(\xi,\PP_{\xi}) \vert^2 \bigr] 
\leq C^2 n^{-2},
\end{equation*}
and taking the limit $n\to+\infty$, we deduce that $\hat{v}$ and $v(\,\cdot\,,\PP_{\xi})$ coincide $\PP_{\xi}$ almost everywhere. This completes the proof when 
$v$ is bounded and $\xi$ has a continuous positive density $p$, $p$ and its derivatives being of exponential decay at the infinity. 
\vspace{2pt}

\textit{Third Step.} When $v$ is bounded and $\xi$ is bounded and has a general distribution, we approximate $\xi$ by $\xi+n^{-1} G$ again. Then, 
$\xi + n^{-1} G$ has a positive continuous density, the density and its derivatives being of Gaussian decay at the infinity, so that, by the second step, the function $\RR^d \ni x \mapsto v(x,\PP_{\xi+n^{-1}G})$ 
can be assumed to be $C$-Lipschitz continuous for each $n\ge 1$. Extracting a converging subsequence and passing to the limit as above, we deduce that $v(\cdot,\PP_{\xi})$ admits a $C$-Lipschitz continuous version. 

When $v$ is bounded but $\xi$ is not bounded, we approximate $\xi$ by its orthogonal projection on the ball of center $0$ and radius $n$. We then complete the proof in a similar way. 

Finally when $v$ is not bounded, we approximate $v$ by $(\psi_{n}(v))_{n \geq 1}$ where, for each $n \geq 1$, 
$\psi_{n}$ is a bounded smooth function from $\RR$ into itself such that $\psi_{n}(r)=r$ for $r\in [-n,n]$ and $\vert [d\psi_{n}/dr](r) \vert \leq 1$ for all $r \in \RR$. Then, for each $n \geq 1$, there exists a $C$-Lipschitz continuous 
version of $\psi_{n}(v(\cdot,\PP_{\xi}))$. Choosing some $x_{0} \in \RR^d$ such that $\vert v(x_{0},\PP_{\xi}) \vert < + \infty$,  the sequence $\psi_{n}(v(x_{0},\PP_{\xi}))$ is bounded so that the sequence of functions $(\psi_{n}(v( \, \cdot \, ,\PP_{\xi})))_{n \geq 1}$ is uniformly bounded and continuous on compact subsets. Extracting a converging subsequence, we complete the proof in the same way as before. 

\subsection{The Hamiltonian and the Dual Equations}
The Hamiltonian of the stochastic optimization problem is defined as the function $H$ given by
\begin{equation}
\label{fo:hamiltonian}
H(t,x,\mu,y,z,\alpha)=b(t,x,\mu,\alpha)\cdot y +\sigma(t,x,\mu,\alpha)\cdot z + f(t,x,\mu,\alpha)
\end{equation}
where the dot notation stands here for the inner product in an Euclidean space.
Because we need to compute derivatives of $H$ with respect to its variable $\mu$, we consider the lifting $\t H$ defined by
\begin{equation}
\label{fo:liftedhamiltonian}
\t H(t,x,\t X,y,z,\alpha)=H(t,x,\mu,y,z,\alpha)
\end{equation}
for any random variable $\t X$ with distribution $\mu$, and we shall denote by $\partial_\mu H(t,x,\mu_0,y,z,\alpha)$ the derivative  with respect to $\mu$ computed at $\mu_0$ (as defined above) whenever all the other variables $t$, $x$, $y$, $z$ and $\alpha$ are held fixed. We recall that $\partial_\mu H(t,x,\mu_0,y,z,\alpha)$ is an element of $L^2(\RR^d,\mu_{0})$ and that we identify it with a function $\partial_\mu H(t,x,\mu_0,y,z,\alpha)(\, \cdot \,) : \RR^d \ni \tilde{x} \mapsto \partial_\mu H(t,x,\mu_0,y,z,\alpha)(\t x)$. It satisfies
$D \t H(t,x,\t X,y,z,\alpha) = \partial_\mu H(t,x,\mu_0,y,z,\alpha)(\t X)$  almost-surely under $\t \PP$. 
\begin{definition}
\label{de:adjoint}
In addition to (A1-2), assume that the coefficients $b,\sigma,f$ and $g$ are differentiable with respect to
$x$ and $\mu$. Then, given an admissible control $\alpha=(\alpha_t)_{0\le t\le T}\in\bA$, we denote by $ X=  X^{\alpha}$ the corresponding controlled state process. Whenever
\begin{equation}
\label{eq:L2:a}
\EE \int_{0}^T \bigl\{ \vert \partial_{x} f(t,X_{t},\PP_{X_{t}},\alpha_{t}) \vert^2  + 
\t \EE \bigl[ \vert \partial_{\mu} f(t,X_{t},\PP_{X_{t}},\alpha_{t})(\t X_{t}) \vert^2 \bigr]\bigr\} dt
< + \infty,
\end{equation}
and 
\begin{equation}
\label{eq:L2:b}
\EE \bigl\{ \vert \partial_{x} g(X_{T},\PP_{X_{T}}) \vert^2 + \t \EE \bigl[ \vert \partial_{\mu} g(X_{T},\PP_{X_{T}})(\t X_{T}) \vert^2 \bigr] \bigr\}
< + \infty,
\end{equation}
we call adjoint processes of $X$ any couple  $((Y_t)_{0\le t\le T},(Z_{t})_{0 \leq t \leq T})$ of progressively measurable stochastic processes in $\HH^{2,d} \times \HH^{2,d\times m}$
 satisfying the equation (which we call the adjoint equation):
\begin{equation}
\label{fo:adjoint}
\begin{cases}
&dY_t=-\partial_xH(t,X_t,\PP_{X_t},Y_t,Z_t,\alpha_t)dt + Z_t dW_t\\
&\phantom{?????????????????????????}-\t\EE[\partial_\mu \t H(t,\t X_t,\PP_{X_t},\t Y_t,\t Z_t,\t\alpha_t)(X_t)]dt\\
& Y_T= \partial_xg(X_T,\PP_{X_T})+\t\EE[\partial_\mu g(\t X_T,\PP_{X_T})(X_T)]
\end{cases}
\end{equation}
where $(\t X,\t Y,\t Z,\t\alpha)$ is an independent copy of $(X,Y,Z,\alpha)$ defined on $L^2(\tilde{\Omega},\t {\mathcal F},\t \PP)$
 and $\t\EE$ denotes the expectation on $(\tilde{\Omega},\t {\mathcal F},\t \PP)$.
\end{definition}
Notice that 
$\EE[\partial_\mu \t H(t,\t X_t,\PP_{X_t},\t Y_t,\t Z_t,\t\alpha_t)(X_t)]$ is a function of the random variable $X_t$ as it stands for 
$\EE[\partial_\mu \t H(t,\t X_t,\PP_{X_t},\t Y_t,\t Z_t,\t\alpha_t)(x)]_{\vert x= X_{t}}$ (and similarly for 
$\t\EE[\partial_\mu g(\t X_T,\PP_{X_T})(X_T)]$). Notice that, when $b$, $\sigma$, $f$ and $g$ do not depend upon the marginal distributions of the controlled state process,  the extra terms appearing in the adjoint equation and its terminal condition disappear and this equation coincides with the classical adjoint equation of stochastic control. 

\vskip 2pt
Using the appropriate interpretation of the symbol $\odot$ as explained in Section \ref{se:mkv} and extending this notation to 
derivatives of the form $\partial_{\mu} h(\mu)(x) \odot p = (\partial_{\mu} [ h(\mu) \cdot p])(x)$, the adjoint equation rewrites
\begin{equation}
\label{fo:adjoint'}
\begin{split}
dY_t&=-\bigl[\partial_xb(t,X_t,\PP_{X_t},\alpha_t)\odot Y_t+\partial_x\sigma(t,X_t,\PP_{X_t},\alpha_t)\odot Z_t+\partial_xf(t,X_t,\PP_{X_t},\alpha_t) \bigr]dt 
\\
&\hspace{250pt}+Z_t dW_t
\\
&\hspace{15pt}
-\t\EE\bigl[\partial_\mu b(t,\t X_t,\PP_{X_t},\t\alpha_t)(X_t)\odot \t Y_t
+\partial_\mu \sigma(t,\t X_t,\PP_{X_t},\t\alpha_t)(X_t)\odot \t Z_t
\\
&\hspace{250pt}+\partial_\mu f(t,\t X_t,\PP_{X_t},\t\alpha_t)(X_t) \bigr]dt,
\end{split}
\end{equation}
with the terminal condition $Y_T= \partial_xg(X_T,\PP_{X_T})+\t\EE[\partial_\mu g(\t X_T,\PP_{X_T})( X_T)]$.
Notice that $\partial_{x} b$ and $\partial_{x} \sigma$ are bounded since $b$ and $\sigma$ are assumed to be $c$-Lipschitz continuous in the variable $x$, see (A2). Notice also that $\EE [ \vert \partial_{\mu} b(t,\t X_t,\PP_{X_t},\t\alpha_t)(X_t) \vert^2]^{1/2}$ and
$\EE [ \vert \partial_{\mu} \sigma(t,\t X_t,\PP_{X_t},\t\alpha_t)(X_t) \vert^2]^{1/2}$ are also bounded by $c$ since 
$b$ and $\sigma$ are assumed to be $c$-Lipschitz continuous in the variable $\mu$ with respect to the 2-Wasserstein distance. It is indeed plain to check that, given a differentiable function $h : {\mathcal P}_{2}(\RR^d) \rightarrow \RR$, the notion of differentiability being defined as above, it holds 
${\mathbb E}[ \vert \partial_{\mu} h(X) \vert^2]^{1/2} \leq c$, for any $\mu \in {\mathcal P}_{2}(\RR^d)$ and 
any random variable $X$ having $\mu$ as distribution, when $h$ is $c$-Lipschitz continuous in $\mu$ with respect to the 2-Wasserstein distance. 

Notice finally that, given an admissible control $\alpha\in\bA$ and the corresponding controlled state process $ X= X^{\alpha}$, despite  the conditions (\ref{eq:L2:a}--\ref{eq:L2:b}) and despite the fact that the first part of the equation appears to be linear in the unknown processes $Y_t$ and $Z_t$, existence and uniqueness of a solution $( Y,  Z)$ of the adjoint equation is not provided by standard results on Backward Stochastic Differential Equations (BSDEs) as the distributions of the solution processes (more precisely their joint distributions with the control and state processes $\alpha$ and $ X$) appear in the coefficients of the equation.
However, a slight modification of the original existence and uniqueness result of Pardoux and Peng \cite{PardouxPeng1990} shows that existence and uniqueness still hold in our more general setting. The main lines of the proof are given in \cite{BuckdahnDjehicheLiPeng},  Proposition 3.1 and Lemma 3.1. However, Lemma 3.1 in \cite{BuckdahnDjehicheLiPeng} doesn't apply directly  since the coefficients 
$(\partial_\mu b(t,\t X_t,\PP_{X_t},\t\alpha_t)(X_t)\odot \t Y_t)_{0 \leq t \leq T}$ and 
$(\partial_\mu \sigma(t,\t X_t,\PP_{X_t},\t\alpha_t)(X_t)\odot \t Z_t)_{0 \leq t \leq T}$ are not Lipschitz continuous in 
$\tilde{Y}$ and $\tilde{Z}$ uniformly in the randomness, see Condition (C1) in \cite{BuckdahnDjehicheLiPeng}. Actually, a careful inspection of the proof shows that the bounds
\begin{equation*}
\begin{split}
&{\mathbb E} \t \EE \bigl[ \vert \partial_\mu b(t,\t X_t,\PP_{X_t},\t\alpha_t)(X_t)\odot \t Y_t \vert^2 \bigr] \leq c' \EE \bigl[ \vert Y_{t} \vert^2 \bigr],
\\
&{\mathbb E} \t \EE \bigl[ \vert \partial_\mu \sigma(t,\t X_t,\PP_{X_t},\t\alpha_t)(X_t)\odot \t Z_t \vert^2 \bigr] \leq c' \EE \bigl[ \vert Z_{t} \vert^2 \bigr],
\end{split}
\end{equation*}
are sufficient to make the whole argument work and thus to prove existence and uniqueness of a solution $( Y, Z)$ satisfying
$$
\EE \biggl[ \sup_{0\le t\le T}|Y_t|^2 + \int_0^T|Z_t|^2dt \biggr] < +\infty.
$$

\section{\textbf{Pontryagin Principle for Optimality}}
\label{se:pontryagin}

In this section, we discuss sufficient and necessary conditions for optimality
when the Hamiltonian satisfies appropriate assumptions of convexity. These conditions will be specified next, 
depending upon the framework. For the time being, we detail the regularity properties that will be used throughout the section. 
 Referring to Subsection \ref{subse:joint} for definitions of joint differentiability, we assume:
\vspace{5pt}

(A3) The functions $b$, $\sigma$ and 
$f$ are differentiable with respect to $(x,\alpha)$, the mappings $(x,\mu,\alpha) \mapsto 
\partial_{x} (b,\sigma,f) (t,x,\mu,\alpha)$ and $(x,\mu,\alpha) \mapsto 
\partial_{\alpha} (b,\sigma,f) (t,x,\mu,\alpha)$ being continuous for any $t \in [0,T]$. The functions $b$, $\sigma$ and $f$ are also differentiable with respect to the variable $\mu$ in the sense given above, the mapping 
$\RR^d \times 
L^2(\Omega;\RR^d) 
\times A \ni (x,X,\alpha) \mapsto \partial_{\mu} (b,\sigma,f)(t,x,\PP_{X},\alpha)(X) \in L^2(\Omega;\RR^{d \times d} \times \RR^{(d \times m) \times d} \times \RR^d)$ being continuous for any 
$t \in [0,T]$. Similarly, the function $g$ is differentiable with respect to $x$, the mapping 
$(x,\mu) \mapsto \partial_{x} g(x,\mu)$ being continuous. The function $g$ is also differentiable with respect to the variable $\mu$, the mapping $\RR^d \times L^2(\Omega;\RR^d) \mapsto \partial_{\mu} g(x,\PP_{X})(X) \in L^2(\Omega;\RR^d)$ being continuous.
\vspace{2pt}

(A4) The coefficients $((b,\sigma,f)(t,0,\delta_{0},0))_{0 \leq t \leq T}$ are uniformly bounded. 
The partial derivatives $\partial_{x} b$ and $\partial_{\alpha} b$ are uniformly bounded and the norm of the mapping $x' \mapsto
\partial_{\mu}(b,\sigma)(t,x,\mu,\alpha)(x')$ in $L^2(\RR^d,\mu)$ 
is also uniformly bounded (i.e. uniformly in $(t,x,\mu,\alpha)$). There exists a constant $L$ such that, for any $R \geq 0$ and any 
$(t,x,\mu,\alpha)$ such that $\vert x \vert,\| \mu \|_{2},\vert \alpha \vert \leq R$, $\vert \partial_{x} (f,g)(t,x,\mu,\alpha) \vert$ 
and $\vert \partial_{\alpha} f(t,x,\mu,\alpha) \vert$ are bounded by $L(1+R)$ and the $L^2(\RR^d,\mu)$-norm of 
$x' \mapsto
\partial_{\mu}(f,g)(t,x,\mu,\alpha)(x')$ is bounded by $L(1+R)$. Here, we have used the notation
\begin{equation*}
\| \mu \|_{2}^2 = \int_{\RR^d} \vert x\vert^2 d\mu(x), \quad \mu \in {\mathcal P}_{2}(\RR^d). 
\end{equation*}

\subsection{\textbf{A Necessary Condition}}
\label{subsec:4:1}
We assume that the sets $A$ and $\bA$ of admissible controls are convex, we fix $\alpha\in\bA$, and as before, we denote by $ X= X^{\alpha}$ the corresponding controlled state process, namely the solution of \eqref{fo:mkvsde} with given initial condition $X_0=x_{0}$. Our first task is to compute the G\^ateaux derivative of the cost functional $J$ at $\alpha$ in all directions. In order to do so, we choose $\beta\in\HH^{2,k}$ such that $\alpha + \epsilon \beta \in \bA$ for $\epsilon >0$ small enough. We then compute the variation of $J$ at $\alpha$ in the direction of $\beta$ (think of $\beta$ as the difference between another element of $\bA$ and $\alpha$).

\vskip 2pt
Letting $(\theta_{t} = (X_{t},\PP_{X_{t}},\alpha_{t}))_{0 \leq t \leq T}$, we define the variation process $ V=(V_t)_{0\le t\le T}$ to be the solution of the equation
\begin{equation}
\label{fo:Voft}
dV_t=\bigl[\gamma_t \cdot V_t+\delta_t(\PP_{(X_t,V_t)})+\eta_t \bigr]dt+\bigl[\t\gamma_t \cdot V_t+\t\delta_t(\PP_{(X_t,V_t)})+\t\eta_t \bigr]dW_t,
\end{equation}
with $V_{0}=0$, where the coefficients $\gamma_t$, $\delta_t$, $\eta_t$, $\t\gamma_t$, $\t\delta_t$ and $\t\eta_t$ are defined as
\begin{eqnarray*}
&\gamma_t=\partial_xb(t,\theta_{t}), 
\quad \quad \t\gamma_t=\partial_x\sigma(t,\theta_{t}), \quad
\eta_t=\partial_\alpha b(t,\theta_{t}) \cdot \beta_t, \quad\text{ and }\quad \t\eta_t=\partial_\alpha \sigma(t,\theta_{t})
\cdot
\beta_t,
\end{eqnarray*}
which are progressively measurable bounded processes with values in $\RR^{d\times d}$, $\RR^{(d\times m) \times d}$, $\RR^{d}$, and $\RR^{d\times m}$ respectively
(the parentheses around $d\times m$ indicating that $\tilde{\gamma}_t \cdot u$ is seen as an element of $\RR^{d \times m}$ whenever $u \in \RR^d$), and
\begin{equation}
\label{fo:deltaoft}
\begin{split}
&\delta_t
=\t\EE \bigl[ \partial_\mu b(t,\theta_{t})(\t X_t)\cdot \t V_t \bigr]
=\t\EE \bigl[ \partial_\mu b(t,x,\PP_{X_t},\alpha)(\t X_t)\cdot \t V_t \bigr]_{\indices{x=X_t}{\alpha=\alpha_t}},
\\
&\t\delta_t
= \t\EE \bigl[
\partial_\mu \sigma(t,\theta_{t})(\t X_t)\cdot \t V_t\big]
=\t\EE \bigl[
\partial_\mu \sigma(t,x,\PP_{X_t},\alpha)(\t X_t)\cdot \t V_t\big]_{\indices{x=X_{t}}{\alpha=\alpha_{t}}},
\end{split}
\end{equation}
which are progressively measurable bounded processes with values in $\RR^{d}$ and $\RR^{d \times m}$ respectively and
where $(\t X_t,\t V_t)$ is an independent copy of $(X_t, V_t)$.  As expectations of functions of $(\t X_t, \t V_t)$, $\delta_t$ and $\t\delta_t$ 
depend upon the joint distribution of $X_t$ and $V_t$. In \eqref{fo:Voft} we wrote $\delta_t(\PP_{(X_t,V_t)})$ and $\t\delta_t(\PP_{(X_t,V_t)})$ 
in order to stress the dependence upon the joint distribution of $X_t$ and $V_t$. Even though we are dealing with possibly random coefficients, the existence and uniqueness of the variation process is guaranteed by Proposition 2.1 of \cite{JourdainMeleardWoyczynski} applied to the couple $( X, V)$ and the system formed
by \eqref{fo:mkvsde} and \eqref{fo:Voft}. Because of our assumption on the boundedness of the partial derivatives of the coefficients, $ V$  satisfies 
$\EE\sup_{0\le t\le T}\left| V_t
\right|^p<\infty$ for every finite $p\ge 1$.

\begin{lemma}
For each $\epsilon>0$ small enough, we denote by $\alpha^\epsilon$ the admissible control defined by $\alpha_t^\epsilon=\alpha_t+\epsilon\beta_t$, and by  $ X^\epsilon= X^{\alpha^\epsilon}$ the corresponding controlled state. We have:
\begin{equation}
\label{fo:variation}
\lim_{\epsilon\searrow 0}\EE\sup_{0\le t\le T}\left| \frac{X^\epsilon_t-X_t}{\epsilon}-V_t
\right|^2=0.
\end{equation}
\end{lemma}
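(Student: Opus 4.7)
The plan is to define $U^\epsilon_t := \epsilon^{-1}(X^\epsilon_t - X_t) - V_t$ and to prove $\EE \sup_{0 \leq t \leq T} |U^\epsilon_t|^2 \to 0$ by deriving an SDE for $U^\epsilon$ whose coefficients are amenable to a Gronwall estimate. As a preliminary, standard Burkholder-Davis-Gundy and Gronwall arguments applied to the SDE satisfied by $\Delta^\epsilon_t := X^\epsilon_t - X_t$, using only the Lipschitz property (A2) and the estimate $W_{2}(\PP_{X^\epsilon_{t}},\PP_{X_{t}}) \leq \EE[|\Delta^\epsilon_{t}|^2]^{1/2}$, yield $\EE \sup_{0 \leq t \leq T} |\Delta^\epsilon_t|^p \leq C_p \epsilon^p$ for every $p \geq 2$. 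In particular, $\sup_{t}|\Delta^\epsilon_t/\epsilon|$ is bounded in $L^p(\Omega)$ uniformly in $\epsilon$ for every $p \geq 2$, which will supply the integrability margin needed in the remainder analysis below.

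Setting $\theta^\lambda_t := (X_t + \lambda \Delta^\epsilon_t, \PP_{X_t + \lambda \Delta^\epsilon_t}, \alpha_t + \lambda \epsilon \beta_t)$ and applying the fundamental theorem of calculus to the lifted version of $b$ (and similarly to $\sigma$) gives
\begin{equation*}
\frac{b(t, X^\epsilon_t, \PP_{X^\epsilon_t}, \alpha^\epsilon_t) - b(t, X_t, \PP_{X_t}, \alpha_t)}{\epsilon} = \int_0^1 \Bigl\{ \partial_x b(t, \theta^\lambda_t) \cdot \frac{\Delta^\epsilon_t}{\epsilon} + \partial_\alpha b(t, \theta^\lambda_t) \cdot \beta_t + \t\EE\Bigl[\partial_\mu b(t, \theta^\lambda_t)(\t X^\lambda_t) \cdot \frac{\t\Delta^\epsilon_t}{\epsilon}\Bigr] \Bigr\} d\lambda.
\end{equation*}
Substituting $\Delta^\epsilon_t/\epsilon = U^\epsilon_t + V_t$ and subtracting the drift coefficient of $V_t$ from \eqref{fo:Voft}, the drift of $U^\epsilon_t$ decomposes as a linear-in-$U^\epsilon$ part
\begin{equation*}
\biggl[\int_0^1 \partial_x b(t,\theta^\lambda_t) d\lambda \biggr]\cdot U^\epsilon_t + \int_0^1 \t\EE\bigl[\partial_\mu b(t,\theta^\lambda_t)(\t X^\lambda_t) \cdot \t U^\epsilon_t\bigr] d\lambda,
\end{equation*}
whose coefficients are bounded in the appropriate $L^2$ sense by (A4), plus a remainder $\rho^\epsilon_t$ built of three kinds of terms: \emph{(i)} $[\int_0^1 \partial_x b(t,\theta^\lambda_t) d\lambda - \partial_x b(t,\theta_t)]\cdot V_t$; \emph{(ii)} $[\int_0^1 \partial_\alpha b(t,\theta^\lambda_t) d\lambda - \partial_\alpha b(t,\theta_t)]\cdot \beta_t$; and \emph{(iii)} $\int_0^1 \t\EE[(\partial_\mu b(t,\theta^\lambda_t)(\t X^\lambda_t) - \partial_\mu b(t,\theta_t)(\t X_t)) \cdot \t V_t] d\lambda$. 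A parallel decomposition yields the diffusion coefficient of $U^\epsilon$ together with an analogous remainder $\t\rho^\epsilon_t$.

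The linear-in-$U^\epsilon$ part contributes at most $C\EE|U^\epsilon_t|^2$ to $\EE|\textrm{drift}|^2$, via Cauchy-Schwarz and the uniform $L^2(\PP_{X^\lambda_t})$-bound on $\partial_\mu b(t,\theta^\lambda_t)(\cdot)$ from (A4). The heart of the argument is then to show $\int_0^T \EE(|\rho^\epsilon_t|^2 + |\t\rho^\epsilon_t|^2) dt \to 0$. Along a subsequence $\epsilon \to 0$, one has $\Delta^\epsilon_t \to 0$ almost surely, hence $\theta^\lambda_t(\omega) \to \theta_t(\omega)$ pointwise for the Euclidean and Wasserstein topologies, and $\t X^\lambda_t \to \t X_t$ in $L^2(\t\Omega)$. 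Types \emph{(i)} and \emph{(ii)} are then handled by the continuity of $\partial_x b, \partial_\alpha b$ in (A3), the boundedness in (A4), and dominated convergence using $\EE|V_t|^2 + \EE\int_0^T|\beta_t|^2 dt < \infty$; the higher moment bound from the preliminary step supplies uniform integrability via Vitali's theorem whenever the bounded factor is coupled to $\Delta^\epsilon/\epsilon$ in intermediate manipulations. Type \emph{(iii)} is the most delicate: fixing $\omega$ and applying (A3) on $\t\Omega$ to the triple $(X^\lambda_t(\omega), \t X^\lambda_t, \alpha^\lambda_t(\omega))$ yields $\|\partial_\mu b(t,\theta^\lambda_t)(\t X^\lambda_t) - \partial_\mu b(t,\theta_t)(\t X_t)\|_{L^2(\t\Omega)} \to 0$ pointwise in $\omega$; boundedness from (A4) and dominated convergence in $\omega$ then give $\EE \t\EE[|\cdots|^2] \to 0$, and a final Cauchy-Schwarz against $\t V_t$ together with dominated convergence in $t$ concludes. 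Once $\int_0^T \EE(|\rho^\epsilon_t|^2 + |\t\rho^\epsilon_t|^2) dt \to 0$, Burkholder-Davis-Gundy on the martingale part and Gronwall's lemma applied to $\varphi^\epsilon(t) := \EE \sup_{0 \leq s \leq t} |U^\epsilon_s|^2 \leq C \int_0^t \varphi^\epsilon(s) ds + o_{\epsilon}(1)$ yield $\varphi^\epsilon(T) \to 0$, which is the claim. The principal obstacle will be type \emph{(iii)}: the $L^2$-continuity in (A3) couples the measure argument of $\partial_\mu b$ to its evaluation point, so transferring it to the product space $\Omega \times \t\Omega$ requires the Fubini-type step just sketched.
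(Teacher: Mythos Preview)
Your proof is correct and follows essentially the same route as the paper's: define the difference process, expand the coefficients via the fundamental theorem of calculus along the segment $\theta^\lambda_t$, split into a linear-in-$U^\epsilon$ part with bounded coefficients and a remainder, show the remainder vanishes in $L^2$ by continuity of the partial derivatives plus dominated convergence, and close with Burkholder--Davis--Gundy and Gronwall. The only cosmetic difference is that you establish the a priori bound $\EE\sup_t|\Delta^\epsilon_t|^p \leq C_p\epsilon^p$ directly from the Lipschitz property (A2) as a preliminary step, whereas the paper instead runs Gronwall twice---once to get uniform boundedness of $\EE\sup_t|V^\epsilon_t|^2$ (and hence $X^{\lambda,\epsilon}_t\to X_t$), then again for convergence to zero; your ordering is arguably cleaner and avoids the two-pass structure, but the substance is identical.
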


\begin{proof}
For the purpose of this proof we set 
$\theta_{t}^{\epsilon}=(X_{t}^{\epsilon},\PP_{X_{t}^{\epsilon}},\alpha_{t}^{\epsilon})$ and 
$V^\epsilon_t= \epsilon^{-1}(X^\epsilon_t-X_t)-V_t$.
Notice that $V^\epsilon_0=0$ and that
\begin{equation}
\label{fo:Vepsilon}
\begin{split}
&dV^\epsilon_t
\\
&= \biggl[\frac{1}{\epsilon}\bigl[b(t,\theta_{t}^\epsilon)-b(t,\theta_{t})
\bigr]-\partial_x b(t,\theta_{t}) \cdot V_t-\partial_\alpha b(t,\theta_{t})\cdot \beta_t-\t\EE
\bigl[
\partial_\mu b(t,\theta_{t})(\t X_t)\cdot \t V_t \bigr] \biggr]dt\\
&\phantom{?}+\biggl[\frac{1}{\epsilon} \bigl[\sigma(t,\theta_{t}^\epsilon)-\sigma
(t,\theta_t) \bigr]-\partial_x \sigma(t,\theta_t) \cdot V_t
-\partial_\alpha \sigma(t,\theta_t) \cdot \beta_t -\t\EE \bigl[ \partial_\mu \sigma(t,\theta_t)(\t X_t)\cdot \t V_t \bigr] \biggr]dW_t\\
&=V^{\epsilon,1}_tdt + V^{\epsilon,2}_t dW_t.
\end{split}
\end{equation} 
Now for each $t\in[0,T]$ and each $\epsilon>0$, we have:
\begin{equation*}
\begin{split}
\frac{1}{\epsilon}\left[b(t,\theta_t^{\epsilon})-b(t,\theta_t)\right]
&=\int_0^1\partial_x b\bigl(t,\theta_t^{\lambda,\epsilon}\bigr) \cdot (V^\epsilon_t+V_t)d\lambda
+\int_0^1\partial_\alpha b\bigl(t,\theta_t^{\lambda,\epsilon}\bigr) \cdot \beta_t d\lambda
\\
&\hspace{15pt}+\int_0^1\t\EE\bigl[
\partial_ \mu b \bigl(t,\theta_t^{\lambda,\epsilon}\bigr)\bigl(\t X_t^{\lambda,\epsilon}\bigr)\cdot (\t V^\epsilon_t+\t V_t) \bigr] d\lambda,
\end{split}
\end{equation*} 
where, in order to simplify a little bit the notation, we have set $X^{\lambda,\epsilon}_t=X_t+\lambda\epsilon(V^\epsilon_t+V_t)$, 
$\alpha^{\lambda,\epsilon}_t=\alpha_t+\lambda\epsilon\beta_t$
and $\theta_{t}^{\lambda,\epsilon} = (X_{t}^{\lambda,\epsilon},\PP_{X_{t}^{\lambda,\epsilon}},\alpha_{t}^{\lambda,\epsilon})$. Computing the `$dt$'-term, we get:
\begin{equation*}
\label{fo:VVepsilon}
\begin{split}
V^{\epsilon,1}_t&=\int_0^1\partial_xb\bigl(t,\theta_{t}^{\lambda,\epsilon}\bigr) \cdot V^\epsilon_t d\lambda
+\int_0^1\t\EE \bigl[ \partial_\mu b\bigl(t,\theta^{\lambda,\epsilon}_t\bigr)(\t X^{\lambda,\epsilon}_t)\cdot\t V^\epsilon_t \bigr] d\lambda
\\
&\hspace{15pt}
+\int_0^1 \bigl[\partial_xb\bigl(t,\theta_{t}^{\lambda,\epsilon}\bigr)-\partial_x b(t,\theta_{t}) \bigr] \cdot V_t d\lambda
+\int_0^1\bigl[\partial_\alpha b\bigl(t,\theta_{t}^{\lambda,\epsilon} \bigr)-\partial_\alpha b(t,\theta_t) \bigr] \cdot \beta_t d\lambda
\\
&\hspace{30pt}+\int_0^1\t\EE
\bigl[
\bigl( \partial_\mu b \bigl(t,\theta^{\lambda,\epsilon}_t\bigr)
(\t X^{\lambda,\epsilon}_t)
-\partial_{\mu} b(t,\theta_t)(\t X_t) \bigr)\cdot\t V_t \bigr]
 d\lambda
 \\
&= \int_0^1\partial_xb\bigl(t,\theta^{\lambda,\epsilon}\bigr) \cdot V^\epsilon_t d\lambda
+\int_0^1\t\EE \bigl[ \partial_\mu b\bigl(t,\theta_{t}^{\lambda,\epsilon} \bigr)(\t X^{\lambda,\epsilon}_t)\cdot\t V^\epsilon_t \bigr] d\lambda
 +I^{\epsilon,1}_t+ I^{\epsilon,2}_t+ I^{\epsilon,3}_t.
\end{split}
\end{equation*} 
The three last terms of the above right hand side are bounded in $L^2([0,T] \times \Omega)$, uniformly in $\epsilon$. Indeed, the Lipschitz regularity of $b$ implies that $\partial_{x} b$ and $\partial_{\alpha} b$ are bounded and that $\partial_{\mu} b(t,x,\PP_{X},\alpha)(X)$ is bounded in $L^2(\Omega;\RR^d)$, uniformly in $(t,x,\alpha) \in [0,T] \times \RR^d \times A$ and $X \in L^2(\Omega,\RR^d)$. Next, we treat the diffusion part $V^{\epsilon,2}_t$  in the same way using Jensen's inequality and Burkholder-Davis-Gundy's inequality to control the quadratic variation of the stochastic integrals.  Consequently, going back to \eqref{fo:Vepsilon}, we see that, for any $S \in [0,T]$,
\begin{equation*}
\EE\sup_{0\le t\le S}|V^\epsilon_t|^2 \leq c' + c' \int_0^S\EE\sup_{0\le s\le t}|V^\epsilon_s|^2 dt,
\end{equation*} 
where as usual $c' >0$ is a generic constant which can change from line to line. Applying Gronwall's inequality, we deduce that 
$\EE\sup_{0\le t\le T}|V^\epsilon_t|^2 \leq c'$. Therefore, we have  
\begin{equation*}
\lim_{\epsilon \searrow 0} \EE \bigl[ \sup_{0 \leq \lambda \leq 1}  \sup_{0 \le t \le T} \bigl\vert X_{t}^{\epsilon,\lambda}
- X_{t} \bigr\vert^2 \bigr] = 0. 
\end{equation*}
We then prove that $I^{1,\epsilon}$, $I^{3,\epsilon}$ and $I^{3,\epsilon}$ converge to $0$ in $L^2([0,T]\times\Omega)$ as $\epsilon\searrow 0$. Indeed, 
\begin{eqnarray*}
\EE\int_0^T|I^{\epsilon,1}_t|^2dt&=&\EE\int_0^T\left|\int_0^1[\partial_xb\bigl(t,\theta^{\lambda,\epsilon}_t \bigr)-\partial_x b(t,\theta_t)]V_t d\lambda
\right|^2dt
\\
&\le&\EE\int_0^T\int_0^1|\partial_xb \bigl(t,\theta_{t}^{\lambda,\epsilon} \bigr)-\partial_x b(t,\theta_{t})|^2|V_t|^2 d\lambda dt.
\end{eqnarray*} 
Since the function $\partial_{x} b$ is bounded and continuous in $x$, $\mu$ and $\alpha$, the above right-hand side converges to $0$ as $\epsilon\searrow 0$. A similar argument applies to $I^{\epsilon,2}_{t}$ and $I^{\epsilon,3}_{t}$. Again, we treat the diffusion part $V^{\epsilon,2}_t$  in the same way using Jensen's inequality and Burkholder-Davis-Gundy's inequality.  Consequently, going back to \eqref{fo:Vepsilon}, we finally see that, for any $S \in [0,T]$,
\begin{equation*}
\EE\sup_{0\le t\le S}|V^\epsilon_t|^2dt 
\le c' \int_0^S\EE\sup_{0\le s\le t}|V^\epsilon_s|^2 dt+\delta_\epsilon
\end{equation*} 
where $\lim_{\epsilon\searrow 0}\delta_\epsilon=0$.
Finally, we get the desired result applying Gronwall's inequality.
\end{proof}

\vskip 2pt\noindent
We now compute the G\^ateaux derivative of the objective function. 

\begin{lemma}
\label{le:gateaux}
The function $\alpha\mapsto J(\alpha)$ is G\^ateaux differentiable and its derivative in the direction $\beta$ is given by:
\begin{equation}
\label{fo:gateaux}
\begin{split}
&\frac{d}{d\epsilon}J(\alpha+\epsilon\beta)\big|_{\epsilon=0} =\EE\int_0^T \bigl[\partial_x f(t,\theta_t) \cdot V_t+\t\EE
[ \partial_\mu 
 f(t,\theta_t)(\t X_t)\cdot\t V_t ]+\partial_\alpha f(t,\theta_t) \cdot \beta_t \bigr]dt
 \\
&\phantom{?????????????????????????} +\EE \bigl[ \partial_x g(X_T,\PP_{X_T}) \cdot V_T + \t\EE [\partial_\mu g(X_T,\PP_{X_T})(\t X_T)\cdot \t V_T] \bigr].
\end{split}
\end{equation}
\end{lemma}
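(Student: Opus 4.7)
The plan is to differentiate the two parts of the cost functional $J(\alpha+\epsilon \beta)-J(\alpha)$ separately by a first-order Taylor expansion in the variables $(x,\mu,\alpha)$, divide by $\epsilon$, and let $\epsilon\searrow 0$ using the convergence established in the previous lemma together with the continuity assumption (A3). I would first write
\begin{equation*}
\frac{J(\alpha+\epsilon\beta)-J(\alpha)}{\epsilon}
= \EE\int_0^T \frac{f(t,\theta_t^\epsilon)-f(t,\theta_t)}{\epsilon} dt
+ \EE\biggl[\frac{g(X_T^\epsilon,\PP_{X_T^\epsilon})-g(X_T,\PP_{X_T})}{\epsilon}\biggr],
\end{equation*}
with $\theta_t^\epsilon=(X_t^\epsilon,\PP_{X_t^\epsilon},\alpha_t^\epsilon)$, and treat the running cost and terminal cost integrands one by one.

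For a generic jointly differentiable map $h(x,\mu,\alpha)$ (here $f$ or $g$, ignoring the $\alpha$ slot for $g$), I would interpolate along the segment $\theta_t^{\lambda,\epsilon} = (X_t+\lambda\epsilon(V_t+V_t^\epsilon),\PP_{X_t+\lambda\epsilon(V_t+V_t^\epsilon)},\alpha_t+\lambda\epsilon\beta_t)$ joining $\theta_t$ to $\theta_t^\epsilon$, and use the lifting $\tilde h$ together with the joint Fréchet differentiability of $\tilde h$ on $\RR^n\times L^2(\tilde\Omega;\RR^d)\times A$ to write
\begin{equation*}
\frac{h(t,\theta_t^\epsilon)-h(t,\theta_t)}{\epsilon}
= \int_0^1\!\! \bigl[\partial_x h(t,\theta_t^{\lambda,\epsilon})\cdot \tfrac{X_t^\epsilon-X_t}{\epsilon}
+ \tilde\EE[\partial_\mu h(t,\theta_t^{\lambda,\epsilon})(\tilde X_t^{\lambda,\epsilon})\cdot \tfrac{\tilde X_t^\epsilon-\tilde X_t}{\epsilon}]
+ \partial_\alpha h(t,\theta_t^{\lambda,\epsilon})\cdot \beta_t\bigr]d\lambda.
\end{equation*}
This is the analogue of the Taylor expansion used to identify $V^{\epsilon,1}_t$ in the previous proof, applied to $f$ instead of $b$, so the representation is already essentially justified there.

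Next I would pass to the limit $\epsilon\searrow 0$ inside the $\lambda$-integral and inside $\EE\int_0^T$. The previous lemma furnishes $\EE\sup_{0\le t\le T}|(X_t^\epsilon-X_t)/\epsilon-V_t|^2\to 0$, hence in particular $\EE|X_t^{\lambda,\epsilon}-X_t|^2\to 0$ uniformly in $\lambda$ and $t$. Assumption (A3) gives joint continuity of $(x,X,\alpha)\mapsto\partial_\mu (f,g)(t,x,\PP_X,\alpha)(X)$ from $\RR^d\times L^2(\Omega;\RR^d)\times A$ into $L^2(\Omega;\RR^d)$, and the analogous continuity of $\partial_x$ and $\partial_\alpha$. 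Combined with the uniform $L^2$-bounds on $\partial_\mu f$, $\partial_x f$, $\partial_\alpha f$ from (A4) and on the variation process $V$, this allows dominated convergence to replace $\partial_x f(t,\theta_t^{\lambda,\epsilon})$ by $\partial_x f(t,\theta_t)$, $\partial_\alpha f$ similarly, and $\tilde\EE[\partial_\mu f(t,\theta_t^{\lambda,\epsilon})(\tilde X_t^{\lambda,\epsilon})\cdot (\tilde X_t^\epsilon-\tilde X_t)/\epsilon]$ by $\tilde\EE[\partial_\mu f(t,\theta_t)(\tilde X_t)\cdot\tilde V_t]$, yielding the running cost part of \eqref{fo:gateaux}. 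Exchanging a copy of $(X,V)$ with an independent copy $(\tilde X,\tilde V)$ is legitimate thanks to Fubini.

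The main obstacle is precisely the treatment of the measure-derivative term: unlike for the $x$- and $\alpha$-expansions, the slot at which $\partial_\mu h$ is evaluated (a measure \emph{and} a lifted random variable) also moves with $\epsilon$, so one cannot appeal to ordinary dominated convergence on $[0,T]\times\Omega$ directly. The correct framework is to view this term on the product space $[0,T]\times\Omega\times\tilde\Omega$ and to exploit both the $L^2$-continuity of $(x,X,\alpha)\mapsto\partial_\mu h(t,x,\PP_X,\alpha)(X)$ given in (A3) and the $L^2$-uniform-in-$\lambda$ convergence of $\tilde X_t^{\lambda,\epsilon}\to\tilde X_t$ to pass to the limit. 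The terminal cost is handled in exactly the same way, without an $\alpha$-contribution, giving the last line of \eqref{fo:gateaux}. Since the argument works for arbitrary admissible $\beta$ such that $\alpha+\epsilon\beta\in\bA$ for small $\epsilon$, Gâteaux differentiability follows.
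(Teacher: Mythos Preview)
Your proposal is correct and follows essentially the same route as the paper: split the difference quotient into running and terminal parts, interpolate via $\theta_t^{\lambda,\epsilon}$, expand by the chain rule in $(x,\mu,\alpha)$, and pass to the limit using the $L^2$-convergence of $(X^\epsilon_t-X_t)/\epsilon$ to $V_t$ from the previous lemma together with the continuity (A3) and growth (A4) assumptions. One small wording caveat: (A4) gives \emph{linear growth} bounds $L(1+R)$ on $\partial_x f,\partial_\alpha f,\partial_\mu f$ rather than uniform $L^2$-bounds, so the passage to the limit is by uniform integrability (using the $L^p$ moments of $X$, $V$, $\alpha$, $\beta$) rather than plain dominated convergence; this is exactly what the paper invokes under the phrase ``standard uniform integrability arguments''.
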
 
\begin{proof}
We use freely the notation introduced in the proof of the previous lemma.
\begin{equation}
\label{eq:3:1:1}
\begin{split}
\frac{d}{d\epsilon}J(\alpha+\epsilon\beta)\big|_{\epsilon=0}&=\lim_{\epsilon\searrow 0}\frac{1}{\epsilon}\EE\int_0^T
\bigl[f \bigl(t,\theta_t^\epsilon\bigr)- f(t,\theta_t) \bigr]dt
\\
&\phantom{???????????????} + \lim_{\epsilon\searrow 0}\frac{1}{\epsilon}\EE \bigl[g(X_T^\epsilon,\PP_{X_T^\epsilon})-g(X_T,\PP_{X_T}) \bigr].
\end{split}
\end{equation}
Computing the two limits separately we get
\begin{equation*}
\begin{split}
&\lim_{\epsilon\searrow 0}\frac{1}{\epsilon}\EE\int_0^T \bigl[f \bigl(t,\theta_t^\epsilon \bigr)- f(t,\theta_t) \bigr]dt\\
&\hspace{15pt}=\lim_{\epsilon\searrow 0}\frac{1}{\epsilon}\EE\int_0^T\int_0^1\frac{d}{d\lambda}f(t,\theta^{\lambda,\epsilon}_t)d\lambda dt \\
&\hspace{15pt}=\lim_{\epsilon\searrow 0}\EE\int_0^T\int_0^1 \bigl[\partial_x f \bigl(t,\theta^{\lambda,\epsilon}_t \bigr) \cdot (V^\epsilon_t+V_t)\\
&\hspace{50pt}+\t\EE \bigl[\partial_\mu f \bigl(t,\theta_{t}^{\lambda,\epsilon} \bigr)(\t X_t^{\lambda,\epsilon})\cdot (\t V^\epsilon_t+\t V_t) \bigr]  +\partial_\alpha f \bigl(t,\theta_{t}^{\lambda,\epsilon} \bigr) \cdot \beta_t
\bigr]d\lambda dt \\
&\hspace{15pt}=\EE\int_0^T \bigl[\partial_x f(t,\theta_t) \cdot V_t+\t\EE \bigl[\partial_\mu f(t,\theta_t)(\t X_t)\cdot \t V_t \bigr]+\partial_\alpha f(t,\theta_t) \cdot \beta_t \bigr]dt
\end{split}
\end{equation*}
using the hypothesis on the continuity and growth of the derivatives of $f$, the uniform convergence proven in the previous lemma and standard uniform integrability arguments. The second term in \eqref{eq:3:1:1} is tackled in a similar way. 
\end{proof}

\vskip 2pt\noindent
Observing that the conditions (\ref{eq:L2:a}--\ref{eq:L2:b}) are satisfied under (A1--4), 
the duality relationship is given by:
\begin{lemma}
\label{le:duality}
Given $(Y_{t},Z_{t})_{0 \leq t \leq T}$
as in Definition \ref{de:adjoint}, it holds:
\begin{eqnarray}
\label{fo:duality}
\EE[Y_T \cdot V_T]&=&\EE\int_0^T \bigl[Y_t \cdot \bigl( \partial_\alpha b(t,\theta_t) \cdot \beta_t \bigr) 
+Z_t \cdot \bigl( \partial_\alpha\sigma(t,\theta_t) \cdot \beta_t \bigr) 
\nonumber\\
&&\phantom{????????}-\partial_xf(t,\theta_t) \cdot V_t -
\t\EE \bigl[ \partial_\mu f(t,\theta_t)(\t X_t)\cdot \t V_t \bigr]\bigr]\,dt.
\end{eqnarray}
\end{lemma}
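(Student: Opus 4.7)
The plan is to apply It\^o's formula to the product $(Y_{t} \cdot V_{t})_{0 \leq t \leq T}$, take expectation, and identify cancellations in the drift. Using the dynamics \reff{fo:Voft} of $V_{t}$ and the adjoint equation \reff{fo:adjoint'} for $(Y_{t},Z_{t})$, we get
\begin{equation*}
\begin{split}
d(Y_{t} \cdot V_{t}) &= Y_{t} \cdot \bigl[ \gamma_{t} \cdot V_{t} + \delta_{t}(\PP_{(X_{t},V_{t})}) + \eta_{t} \bigr] dt
+ Y_{t} \cdot \bigl[ \t \gamma_{t} \cdot V_{t} + \t \delta_{t}(\PP_{(X_{t},V_{t})}) + \t \eta_{t} \bigr] dW_{t}
\\
&\hspace{5pt}
- V_{t} \cdot \bigl[ \partial_{x} b \odot Y_{t} + \partial_{x} \sigma \odot Z_{t} + \partial_{x} f \bigr] dt
+ V_{t} \cdot Z_{t} dW_{t}
\\
&\hspace{5pt}
- V_{t} \cdot \t \EE \bigl[ \partial_{\mu} b(t,\t \theta_{t})(X_{t}) \odot \t Y_{t} + \partial_{\mu} \sigma(t,\t \theta_{t})(X_{t}) \odot \t Z_{t} + \partial_{\mu} f(t,\t \theta_{t})(X_{t}) \bigr] dt
\\
&\hspace{5pt}
+ Z_{t} \cdot \bigl[ \t \gamma_{t} \cdot V_{t} + \t \delta_{t}(\PP_{(X_{t},V_{t})}) + \t \eta_{t} \bigr] dt,
\end{split}
\end{equation*}
where the last line is the cross-variation. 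Since $V_{0}=0$ and the square-integrability of $Z$ in $\HH^{2,d\times m}$ together with the $L^p$-bounds on $V$ and the boundedness of the variational coefficients imply that the stochastic integrals are true martingales, taking expectations gives $\EE[Y_{T} \cdot V_{T}]$ as the expected integral of the drift.

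The terms involving the $x$-derivatives cancel by the very definition of $\odot$: writing $\partial_{x}b$ and $\partial_{x}\sigma$ as Jacobians, we have $Y_{t} \cdot (\partial_{x}b(t,\theta_{t}) \cdot V_{t}) = V_{t} \cdot (\partial_{x} b(t,\theta_{t}) \odot Y_{t})$ and the analogous identity for $\sigma$, so the two $\partial_{x} b$-terms and the two $\partial_{x}\sigma$-terms cancel against each other. What is left in the drift is
\begin{equation*}
Y_{t} \cdot \eta_{t} + Z_{t} \cdot \t \eta_{t} - V_{t} \cdot \partial_{x} f(t,\theta_{t})
\end{equation*}
plus all of the $\mu$-derivative contributions, namely $Y_{t} \cdot \delta_{t} + Z_{t} \cdot \t \delta_{t}$ from the variation process and $-V_{t} \cdot \t \EE[\partial_{\mu} b \odot \t Y_{t} + \partial_{\mu} \sigma \odot \t Z_{t} + \partial_{\mu} f]$ from the adjoint equation.

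The key step is to show that the $\partial_{\mu} b$ and $\partial_{\mu} \sigma$ contributions cancel and that the $\partial_{\mu} f$ contribution rewrites as in the statement. This is a Fubini-type argument exploiting the fact that $(X_{t},Y_{t},Z_{t},V_{t},\alpha_{t})$ and $(\t X_{t}, \t Y_{t}, \t Z_{t}, \t V_{t}, \t \alpha_{t})$ are independent and identically distributed. Indeed, using the definition of $\odot$ coordinate-wise and writing $b(t,\t\theta) \cdot \t Y = \sum_{i} b_{i}(t,\t\theta) \t Y_{i}$, one checks the identity
\begin{equation*}
\EE \t \EE \bigl[ Y_{t} \cdot \bigl( \partial_{\mu} b(t,\theta_{t})(\t X_{t}) \cdot \t V_{t} \bigr) \bigr]
= \EE \t \EE \bigl[ V_{t} \cdot \bigl( \partial_{\mu} b(t,\t \theta_{t})(X_{t}) \odot \t Y_{t} \bigr) \bigr],
\end{equation*}
by swapping the roles of the tilded and untilded copies, with the analogous identity for $\sigma$ in place of $b$; this yields the cancellation of the $\delta_{t}$ and $\t\delta_{t}$ terms against the corresponding terms coming from the adjoint equation. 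The very same exchangeability turns $\EE[V_{t} \cdot \t \EE[\partial_{\mu} f(t,\t\theta_{t})(X_{t})]]$ into $\EE[\t \EE[\partial_{\mu} f(t,\theta_{t})(\t X_{t}) \cdot \t V_{t}]]$. Finally, since $\eta_{t} = \partial_{\alpha} b(t,\theta_{t}) \cdot \beta_{t}$ and $\t\eta_{t} = \partial_{\alpha} \sigma(t,\theta_{t}) \cdot \beta_{t}$, collecting what remains gives \reff{fo:duality}.

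The main obstacle is the bookkeeping of the $\mu$-differentiation: one needs to be careful about the distinction between the action $\cdot$ and the adjoint action $\odot$ of the $\mu$-derivatives of the vector- and matrix-valued coefficients $b$ and $\sigma$, and to justify the use of Fubini's theorem by checking the $L^{2}$-integrability of all the terms, which follows from the boundedness of $\partial_{x}b$, $\partial_{x}\sigma$, the $L^{2}(\RR^{d},\mu)$-norms of $\partial_{\mu} b$ and $\partial_{\mu} \sigma$ under (A4), the $L^{p}$ bounds on $V$ and $X$, and the $\HH^{2}$-membership of $Y$ and $Z$.
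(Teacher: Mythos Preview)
Your proof is correct and follows essentially the same route as the paper's own proof: both apply the It\^o product rule to $Y_t\cdot V_t$, take expectation to kill the martingale part, and then use the exchangeability of $(X,Y,Z,V,\alpha)$ and its independent tilde copy (Fubini) to cancel the $\partial_\mu b$- and $\partial_\mu\sigma$-contributions and rewrite the $\partial_\mu f$-term. The only cosmetic difference is that the paper packages the drift terms via $\partial_x H$ and $\partial_\mu H$ before invoking Fubini, whereas you expand $b,\sigma,f$ separately and make the $\odot$ versus $\cdot$ bookkeeping more explicit.
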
 
\begin{proof}
Letting $\Theta_{t} = (X_{t},\PP_{X_{t}},Y_{t},Z_{t},\alpha_{t})$ and 
using the definitions \eqref{fo:Voft} of the variation process $ V$, and \eqref{fo:adjoint} or \eqref{fo:adjoint'} of the adjoint process $ Y$, integration by parts gives:
\begin{equation*}
\begin{split}
Y_T \cdot V_T &=Y_0 \cdot V_0+\int_0^TY_t \cdot dV_t+\int_0^TdY_t \cdot V_t+\int_0^Td[ Y, V]_t
\\
&= M_{T} + \int_0^T \biggl[Y_t \cdot \bigl( \partial_x b(t,\theta_t) \cdot V_t \bigr) 
+Y_t \cdot  \t\EE \bigl[ \partial_\mu b(t,\theta_t)(\t X_t)\cdot\t V_t \bigr]
+Y_t \cdot \bigl( \partial_\alpha b(t,\theta_t) \cdot \beta_t \bigr) 
\\
&\hspace{45pt}-\partial_x H(t,\Theta_t) \cdot V_t-\t\EE \bigl[ \partial_\mu H(t,\t \Theta_t)(X_t)\cdot V_t \bigr]
\\
&\hspace{45pt}+Z_t \cdot \bigl( \partial_x\sigma(t,\theta_t) \cdot V_t \bigr)+Z_t \cdot \t\EE \bigl[ \partial_\mu \sigma(t,\theta_t)(\t X_t)\cdot\t V_t \bigr]+Z_t \cdot \bigl( \partial_\alpha\sigma(t,\theta_t) \cdot \beta_t \bigr) \biggr]dt,
\end{split}
\end{equation*}
where $(M_t)_{0\le t\le T}$ is a mean zero integrable martingale. By taking expectations on both sides and applying Fubini's theorem:
\begin{equation*}
\begin{split}
&\EE\t\EE \bigl[ \partial_\mu H(t,\t \Theta_t)(X_t)\cdot V_t \bigr]
\\
&\phantom{?}=\EE\t\EE \bigl[ \partial_\mu H(t,\Theta_t)(\t X_t)\cdot \t V_t \bigr]
\\
&\phantom{?}=\EE\t\EE \bigl[ \bigl( \partial_\mu b(t,\theta_t)(\t X_t)\cdot\t V_t \bigr) \cdot Y_t+ \bigl( \partial_\mu\sigma(t,\theta_t)(\t X_t)\cdot\t V_t \bigr) \cdot Z_t+\partial_\mu f(t,\theta_t)(\t X_t)\cdot\t V_t \bigr].
\end{split}
\end{equation*}
By commutativity of the inner product, cancellations occur and we get the desired equality \eqref{fo:duality}.
\end{proof}

By putting together the duality relation \eqref{fo:duality} and \eqref{fo:gateaux} we get:

\begin{corollary}
The G\^ateaux derivative of $J$ at $\alpha$ in the direction of $\beta$ can be written as:
\begin{equation}
\label{fo:Hgateaux}
\frac{d}{d\epsilon}J(\alpha+\epsilon\beta)\big|_{\epsilon=0}=\EE\left[\int_0^T \partial_\alpha H(t,X_t,\PP_{X_t},Y_t,\alpha_t) \cdot \beta_t \right]dt. 
\end{equation}
\end{corollary}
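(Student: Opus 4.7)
The plan is to read the corollary as a direct consequence of Lemmas \ref{le:gateaux} and \ref{le:duality}: the duality relation is tailored so that, once added to the G\^ateaux derivative formula, all terms involving the variation process $V$ cancel out, leaving only the $\partial_{\alpha}$-terms that, by definition of the Hamiltonian \eqref{fo:hamiltonian}, reconstitute $\partial_{\alpha} H$ tested against $\beta$.

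First, I would rewrite the terminal contribution in \eqref{fo:gateaux} as $\EE[Y_{T} \cdot V_{T}]$. Recalling that $Y_{T} = \partial_{x} g(X_{T},\PP_{X_{T}}) + \t \EE[\partial_{\mu} g(\t X_{T},\PP_{X_{T}})(X_{T})]$, this identification is immediate provided one swaps $(X_{T},V_{T})$ and $(\t X_{T}, \t V_{T})$ in the mixed expectation via Fubini: since $(X_{T},V_{T})$ and $(\t X_{T},\t V_{T})$ have the same joint distribution, one has
\begin{equation*}
\EE \t \EE \bigl[ \partial_{\mu} g(X_{T},\PP_{X_{T}})(\t X_{T}) \cdot \t V_{T} \bigr]
= \EE \t \EE \bigl[ \partial_{\mu} g(\t X_{T},\PP_{X_{T}})(X_{T}) \cdot V_{T} \bigr].
\end{equation*}

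Next, I would apply the same Fubini symmetry inside the running-cost integrand of \eqref{fo:gateaux}, so that the two $\partial_{x} f$ and $\partial_{\mu} f$ contributions become precisely those appearing on the right-hand side of \eqref{fo:duality} (but with opposite sign). Substituting the expression for $\EE[Y_{T} \cdot V_{T}]$ provided by Lemma \ref{le:duality} into the transformed \eqref{fo:gateaux}, the $\partial_{x} f$-term and the $\partial_{\mu} f$-term cancel, and one is left with
\begin{equation*}
\frac{d}{d\epsilon} J(\alpha+\epsilon \beta) \big|_{\epsilon=0}
= \EE \int_{0}^T \bigl[ Y_{t} \cdot (\partial_{\alpha} b(t,\theta_{t}) \cdot \beta_{t})
+ Z_{t} \cdot (\partial_{\alpha} \sigma(t,\theta_{t}) \cdot \beta_{t})
+ \partial_{\alpha} f(t,\theta_{t}) \cdot \beta_{t} \bigr] dt.
\end{equation*}

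Finally, differentiating the Hamiltonian \eqref{fo:hamiltonian} with respect to $\alpha$ gives
$\partial_{\alpha} H(t,x,\mu,y,z,\alpha) = \partial_{\alpha} b(t,x,\mu,\alpha)^{*} y + \partial_{\alpha} \sigma(t,x,\mu,\alpha)^{*} z + \partial_{\alpha} f(t,x,\mu,\alpha)$, so the bracket above is exactly $\partial_{\alpha} H(t,X_{t},\PP_{X_{t}},Y_{t},Z_{t},\alpha_{t}) \cdot \beta_{t}$, which yields \eqref{fo:Hgateaux}. The only subtle point in this manipulation is ensuring that all Fubini exchanges between $\EE$ and $\t \EE$ are legitimate; this is guaranteed by the $L^2$-integrability of $\partial_{\mu} f$, $\partial_{\mu} g$, $\partial_{\mu} b$, $\partial_{\mu} \sigma$ evaluated along the state process and its tilde copy, which follows from (A3)--(A4) together with the moment bound $\EE \sup_{0 \le t \le T} \vert V_{t} \vert^2 < \infty$ established in the proof of the variation lemma.
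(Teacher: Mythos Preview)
Your proof is correct and follows essentially the same approach as the paper's: identify the terminal term in \eqref{fo:gateaux} with $\EE[Y_T\cdot V_T]$ via Fubini on the tilde copies, then substitute the duality expression from Lemma~\ref{le:duality} so that the $\partial_x f$ and $\partial_\mu f$ terms cancel and the remaining $\partial_\alpha b$, $\partial_\alpha\sigma$, $\partial_\alpha f$ terms combine into $\partial_\alpha H$. One small remark: the extra Fubini step you propose for the running-cost $\partial_\mu f$ term is unnecessary, since in both \eqref{fo:gateaux} and \eqref{fo:duality} that term already appears in the same form $\t\EE[\partial_\mu f(t,\theta_t)(\t X_t)\cdot\t V_t]$ and cancels directly.
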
 
\begin{proof}Using Fubini's theorem, the second expectation appearing in the expression \eqref{fo:gateaux} of the G\^ateaux derivative of $J$ given in Lemma \ref{le:gateaux}
 can be rewritten as:
\begin{eqnarray*}
&&\EE \bigl[ \partial_x g(X_T,\PP_{X_T}) \cdot V_T + \t\EE \bigl( \partial_\mu g(X_T,\PP_{X_T})(\t X_T)\cdot \t V_T \bigr) \bigr]\\
&&\phantom{??????????????}=\EE \bigl[ \partial_x g(X_T,\PP_{X_T}) \cdot V_T \bigr] + \EE\t\EE \bigl[\partial_\mu g(\t X_T,\PP_{\t X_T})(X_T)\cdot V_T \bigr]\\
&&\phantom{??????????????}=\EE[ Y_T\cdot V_T],
\end{eqnarray*}
and using the expression derived in Lemma \ref{le:duality} for $\EE[ Y_T\cdot V_T]$ in \eqref{fo:gateaux} gives the desired result.
\end{proof}
The main result of this subsection is the following theorem.

\begin{theorem}
Under the above assumptions, if we assume further that the Hamiltonian $H$ is convex in $\alpha$, the admissible control $\alpha\in\bA$ is optimal, $ X$ is the associated (optimally) controlled state, and $( Y,  Z)$ are the associated adjoint processes solving the adjoint equation \eqref{fo:adjoint}, then we have:
\begin{equation}
\label{fo:Pminimum}
\forall \alpha\in A,\quad H(t,X_t,Y_t,Z_t,\alpha_t)\le H(t,X_t,Y_t,Z_t,\alpha) \qquad a.e. \text{ in } t\in[0,T],\; \PP - a.s.
\end{equation}
\end{theorem}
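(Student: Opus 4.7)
The plan is to combine the first-order necessary condition of optimality, encoded in the Gâteaux derivative formula of the preceding corollary, with the convexity hypothesis on the Hamiltonian in the control variable.

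First, since $\bA$ is convex and $\alpha$ is optimal, for any $\beta \in \HH^{2,k}$ such that $\alpha + \epsilon \beta \in \bA$ for all sufficiently small $\epsilon > 0$, the map $\epsilon \mapsto J(\alpha + \epsilon \beta)$ attains its minimum at $\epsilon=0$, so
\begin{equation*}
\EE \int_0^T \partial_\alpha H(t,X_t,\PP_{X_t},Y_t,Z_t,\alpha_t) \cdot \beta_t \, dt \; \geq \; 0.
\end{equation*}
(I read the $\partial_\alpha H$ in the corollary as including the term coming from $\partial_\alpha \sigma \cdot Z_t$; this is the genuine $\alpha$-gradient of the Hamiltonian $H$ from \eqref{fo:hamiltonian}.)

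Second, I would feed into this inequality the particular perturbations $\beta_t = (a - \alpha_t)\mathbf{1}_C(t,\omega)$, where $a \in A$ is an arbitrary fixed point and $C \subset [0,T] \times \Omega$ is an arbitrary progressively measurable set. Convexity of $A$ ensures that $\alpha_t + \epsilon \beta_t \in A$ for every $\epsilon \in [0,1]$, and $\beta \in \HH^{2,k}$ since $\alpha \in \HH^{2,k}$ and $a$ is a fixed constant. Substituting yields
\begin{equation*}
\EE \int_0^T \mathbf{1}_C(t,\omega) \, \partial_\alpha H(t,X_t,\PP_{X_t},Y_t,Z_t,\alpha_t) \cdot (a-\alpha_t) \, dt \; \geq \; 0
\end{equation*}
for every progressively measurable $C$. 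By a standard measurable selection argument (exhausting $C$), this forces
\begin{equation*}
\partial_\alpha H(t,X_t,\PP_{X_t},Y_t,Z_t,\alpha_t) \cdot (a - \alpha_t) \; \geq \; 0, \qquad d\PP \otimes dt \text{-a.e.}
\end{equation*}
The exceptional set here a priori depends on $a$. To obtain a single full-measure set valid for all $a \in A$ simultaneously, I pick a countable dense subset $D \subset A$, take the countable union of the exceptional sets indexed by $D$, and then extend to all $a \in A$ using the joint continuity of $\partial_\alpha H$ in $(x,\mu,\alpha)$ guaranteed by (A3).

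Third, I invoke convexity of $H$ in $\alpha$: for every $a \in A$ and every $(t,\omega)$ in the set of full measure constructed above,
\begin{equation*}
H(t,X_t,\PP_{X_t},Y_t,Z_t,a) - H(t,X_t,\PP_{X_t},Y_t,Z_t,\alpha_t) \; \geq \; \partial_\alpha H(t,X_t,\PP_{X_t},Y_t,Z_t,\alpha_t) \cdot (a - \alpha_t) \; \geq \; 0,
\end{equation*}
which is exactly \eqref{fo:Pminimum}.

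The main obstacle is the interchange of ``for almost every $(t,\omega)$'' and ``for every $a \in A$'': passing from a single $a$ to a countable dense subset and then to all of $A$ requires the continuity of $\partial_\alpha H$ in $\alpha$ (granted by (A3)) together with integrability to justify the selection of a universal null set. Everything else is routine: checking admissibility of the perturbation $\beta$, using convexity of $\bA$, and unpacking the convexity inequality for $H$.
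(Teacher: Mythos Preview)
Your argument is correct, but it orders the two key ingredients differently from the paper. The paper first applies convexity of $H$ in $\alpha$ at the level of the integrated inequality: from
\[
\EE\int_0^T \partial_\alpha H(t,X_t,\PP_{X_t},Y_t,Z_t,\alpha_t)\cdot(\beta_t-\alpha_t)\,dt \ge 0
\]
(valid for every $\beta\in\bA$) and convexity, one deduces directly
\[
\EE\int_0^T \bigl[H(t,X_t,\PP_{X_t},Y_t,Z_t,\beta_t)-H(t,X_t,\PP_{X_t},Y_t,Z_t,\alpha_t)\bigr]\,dt \ge 0,
\]
and only then localizes by taking $\beta_t=a\,\mathbf 1_C+\alpha_t\,\mathbf 1_{C^c}$ for a fixed $a\in A$ and an arbitrary progressively measurable $C$. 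This yields \eqref{fo:Pminimum} for each fixed $a$, with a null set that may depend on $a$, which is exactly how the statement is quantified. You instead localize the gradient inequality first and apply convexity pointwise; this forces you to manufacture a single null set good for all $a\in A$ via a countable dense subset and the continuity of $\partial_\alpha H$ from (A3). Your route therefore proves a marginally stronger pointwise statement, at the cost of the extra density argument; the paper's order sidesteps that issue entirely.
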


\vskip 2pt\noindent\emph{Proof. }
Since $A$ is convex, given $\beta\in\bA$ we can choose the perturbation $\alpha^\epsilon_t=\alpha_t+\epsilon (\beta_t-\alpha_t)$
which is still in $\bA$ for $0\le \epsilon\le 1$. Since $\alpha$ is optimal, we have the inequality
$$
\frac{d}{d\epsilon}J(\alpha+\epsilon(\beta-\alpha))\big|_{\epsilon=0}=\EE\int_0^T\partial_\alpha H(t,X_t,Y_t,Z_t,\alpha_t)\cdot (\beta_t-\alpha_t)\ge 0.
$$
By convexity of the Hamiltonian with respect to the control variable $\alpha\in A$, we conclude that
$$
\EE\int_0^T[H(t,X_t,Y_t,Z_t,\beta_t)- H(t,X_t,Y_t,Z_t,\alpha_t)]dt\ge 0, 
$$
for all $\beta$. Now, if for a given (deterministic) $\alpha\in A$ we choose $\beta$ in the following way, 
$$
\beta_t(\omega)
= \begin{cases}
\alpha&\text{ if } (t,\omega)\in C\\
\alpha_t(\omega)&\text{ otherwise}
\end{cases}
$$
for an arbitrary progressively-measurable set $C\subset [0,T]\times\Omega$ (that is $C \cap [0,t] \in {\mathcal B}([0,t]) \otimes {\mathcal F}_{t}$ for any $t \in [0,T]$), we see that
$$
\EE\int_0^T{\bf 1}_C[H(t,X_t,Y_t,Z_t,\alpha)- H(t,X_t,Y_t,Z_t,\alpha_t)]dt\ge 0,
$$
from which we conclude that
$$
H(t,X_t,Y_t,Z_t,\alpha)- H(t,X_t,Y_t,Z_t,\alpha_t) \ge 0 \qquad dt\otimes d\PP \;a.e. \, ,
$$
which is the desired conclusion.
\qed

\subsection{\textbf{A Sufficient Condition}}
The necessary condition for optimality identified in the previous subsection can be turned into a sufficient condition for optimality under some technical assumptions.

\begin{theorem}
\label{th:pontryagin}
Under the same assumptions of regularity on the coefficients as before, let $\alpha\in\bA$ be an admissible control, $ X= X^{\alpha}$ the corresponding controlled state process, and $( Y, Z)$ the corresponding adjoint processes.
Let us also assume that for each $t\in [0,T]$
\begin{enumerate}
\item $\RR^d \times {\mathcal P}_{2}(\RR^d) \ni (x,\mu)\mapsto g(x,\mu)$ is convex;
\item $\RR^d \times {\mathcal P}_{2}(\RR^d) \times A \ni (x,\mu,\alpha)\mapsto H(t,x,\mu,Y_t,Z_t,\alpha)$ is convex $dt\otimes d\PP$ almost everywhere.
\end{enumerate} 
Moreover, if 
\begin{equation}
\label{fo:isaacs}
 H(t,X_t,\PP_{X_t},Y_t,Z_t, \alpha_t)=\inf_{\alpha\in A}H(t,X_t,\PP_{X_t},Y_t,Z_t,\alpha),\quad  dt\otimes d\PP \;\text{a.e.}
 \end{equation}
then $\alpha$ is an optimal control, i.e.
$J(\alpha)=\inf_{ \alpha'\in\bA}J( \alpha')
$. 
\end{theorem}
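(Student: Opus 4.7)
The plan is to compare $J(\alpha')$ to $J(\alpha)$ for an arbitrary admissible $\alpha' \in \bA$ with associated state $X' = X^{\alpha'}$, and to show non-negativity by combining convexity of $g$ and $H$ with a duality identity obtained from It\^o's formula applied to $Y_{t} \cdot (X'_{t} - X_{t})$. Throughout, write $\Theta_{t} = (X_{t}, \PP_{X_{t}}, Y_{t}, Z_{t}, \alpha_{t})$ and $\theta_{t} = (X_{t}, \PP_{X_{t}}, \alpha_{t})$.

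The first step is to bound the difference in terminal cost. Convexity of $g$ in the sense of \eqref{fo:convexity} gives $\EE[g(X'_{T},\PP_{X'_{T}}) - g(X_{T},\PP_{X_{T}})] \geq \EE[\partial_{x} g(X_{T},\PP_{X_{T}}) \cdot (X'_{T} - X_{T})] + \EE \t\EE[\partial_{\mu} g(X_{T},\PP_{X_{T}})(\t X_{T}) \cdot (\t X'_{T} - \t X_{T})]$. A Fubini exchange between $\EE$ and $\t\EE$ (the same swap-of-tildes trick used in Lemma \ref{le:duality}) rewrites the last term as $\EE \t\EE[\partial_{\mu} g(\t X_{T},\PP_{X_{T}})(X_{T}) \cdot (X'_{T} - X_{T})]$, and the adjoint terminal condition in \eqref{fo:adjoint} then identifies the whole right-hand side as $\EE[Y_{T} \cdot (X'_{T} - X_{T})]$.

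Next I compute $\EE[Y_{T} \cdot (X'_{T} - X_{T})]$ by It\^o's formula, using \eqref{fo:adjoint'} for $dY_{t}$ and the difference of two copies of \eqref{fo:mkvsde} for $d(X'_{t} - X_{t})$; the stochastic integrals have zero expectation under (A3)--(A4) and $\alpha, \alpha' \in \bA$, and the mean-field adjoint term is handled by the same Fubini relabeling as in Lemma \ref{le:duality}. Adding this to $\EE \int_{0}^T [f(t,X'_{t},\PP_{X'_{t}},\alpha'_{t}) - f(t,\theta_{t})] dt$ and using $f = H - b \cdot Y - \sigma \cdot Z$, the $b \cdot Y$ and $\sigma \cdot Z$ contributions cancel, yielding the intermediate bound $J(\alpha') - J(\alpha) \geq \EE \int_{0}^T [H(t,X'_{t},\PP_{X'_{t}},Y_{t},Z_{t},\alpha'_{t}) - H(t,\Theta_{t})] dt - \EE \int_{0}^T \bigl\{ \partial_{x} H(t,\Theta_{t}) \cdot (X'_{t} - X_{t}) + \t\EE[\partial_{\mu} H(t,\Theta_{t})(\t X_{t}) \cdot (\t X'_{t} - \t X_{t})] \bigr\} dt$. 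Joint convexity of $H$ in $(x,\mu,\alpha)$ at the reference point $\Theta_{t}$ (hypothesis 2), applied in the form \eqref{fo:convexity}, bounds the first integrand below by $\partial_{x} H(\Theta_{t}) \cdot (X'_{t} - X_{t}) + \t\EE[\partial_{\mu} H(\Theta_{t})(\t X_{t}) \cdot (\t X'_{t} - \t X_{t})] + \partial_{\alpha} H(\Theta_{t}) \cdot (\alpha'_{t} - \alpha_{t})$, so that everything except the $\alpha$ contribution cancels and one is left with $J(\alpha') - J(\alpha) \geq \EE \int_{0}^T \partial_{\alpha} H(t,\Theta_{t}) \cdot (\alpha'_{t} - \alpha_{t}) dt$.

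To conclude, I invoke \eqref{fo:isaacs}: for $dt \otimes d\PP$ almost every $(t,\omega)$, $\alpha_{t}$ minimizes the convex map $A \ni a \mapsto H(t,X_{t},\PP_{X_{t}},Y_{t},Z_{t},a)$ over the convex set $A$, and the first-order condition for constrained convex minimization then gives $\partial_{\alpha} H(\Theta_{t}) \cdot (\alpha'_{t} - \alpha_{t}) \geq 0$ for every $\alpha'_{t} \in A$, whence $J(\alpha') \geq J(\alpha)$. The delicate step is the It\^o/duality computation in paragraph three: it requires tracking the $\odot$ convention for the vector-valued Wasserstein derivatives of $b$ and $\sigma$, checking $L^2$-integrability of the products entering the stochastic integrals so that their expectations vanish (which follows from the uniform bounds on $\partial_{x} b$, $\partial_{x} \sigma$, and on the $L^2$-norms of $\partial_{\mu} b(t,\t X_{t},\PP_{X_{t}},\t\alpha_{t})(X_{t})$ and $\partial_{\mu} \sigma(t,\t X_{t},\PP_{X_{t}},\t\alpha_{t})(X_{t})$ provided by (A3)--(A4)), and performing the Fubini swap between $(\Omega,\cF,\PP)$ and $(\t\Omega,\t\cF,\t\PP)$ exactly in the manner demonstrated in Lemma \ref{le:duality}.
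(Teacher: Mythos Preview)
Your proposal is correct and follows essentially the same route as the paper: convexity of $g$ plus the adjoint terminal condition to bound the terminal-cost difference by $\EE[Y_T\cdot(X'_T-X_T)]$, an It\^o/duality computation with the Fubini swap of tildes to identify this with the Hamiltonian increments minus the $(x,\mu)$-derivative terms, and then convexity of $H$ together with \eqref{fo:isaacs} to conclude. The only cosmetic difference is that you work with $J(\alpha')-J(\alpha)\ge 0$ and explicitly isolate the $\partial_\alpha H$ term via the constrained first-order optimality inequality, whereas the paper works with $J(\alpha)-J(\alpha')\le 0$ and absorbs that step into the final convexity inequality.
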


\vskip 2pt\noindent\emph{Proof. }
Let $\alpha'\in\bA$ be a generic admissible control, and $ X'= X^{\alpha'}$ the corresponding controlled state.
By definition of the objective function of the control problem we have:
\begin{equation}
\label{eq:24:1:1}
\begin{split}
J(\alpha)-J(\alpha')&= \EE \bigl[g(X_T,\PP_{X_T})-g(X'_T,\PP_{X'_T}) \bigr]+ \EE\int_0^T
\bigl[f(t,\theta_t)-f(t,\theta_t') \bigr]dt
\\
&= \EE \bigl[g(X_T,\PP_{X_T})-g(X'_T,\PP_{X'_T}) \bigr] +\EE\int_0^T \bigl[H(t,\Theta_t)-H(t,\Theta_t') \bigr]dt
\\
&\phantom{????????}- \EE\int_0^T \bigl\{ \bigl[b(t,\theta_t)-b(t,\theta_t') \bigr]\cdot Y_t + 
\bigl[\sigma(t,\theta_t)-\sigma(t,\theta'_t)]\cdot Z_t \bigr\} dt
\end{split}
\end{equation}
by definition of the Hamiltonian, where $\theta_{t} = (X_{t},\PP_{X_{t}},\alpha_{t})$ and 
$\Theta_{t} = (X_{t},\PP_{X_{t}},Y_{t},Z_{t},\alpha_{t})$ (and similarly for 
$\theta_{t}'$ and $\Theta_{t}'$). The function
$g$ being convex, we have 
$$
g(x,\mu)-g(x',\mu')\le (x-x')\cdot \partial_xg(x)+\t\EE \bigl[\partial_\mu g(x,\mu)(\t X)\cdot (\t X-\t {X'})\bigr],
$$ 
so that
\begin{equation}
\label{fo:g}
\begin{split}
&\EE[g(X_T,\PP_{X_T})-g(X'_T,\PP_{X'_T})]
\\
&\le \EE \bigl[\partial_xg(X_T,\PP_{X_T}) \cdot (X_T-X'_T) + \t\EE \bigl[\partial_\mu g(X_T,\PP_{X_{T}})(\t X_T)  
\cdot (\t X_T-\t {X'_T}) \bigr]
\bigr]
\\
&= \EE \bigl[ \bigl( \partial_xg(X_T,\PP_{X_T})+ \t\EE[\partial_\mu g(\t X_T, \PP_{X_{T}})(X_T) ] \bigr) \cdot (X_T- X'_T) \bigr]
\\
&= \EE \bigl[Y_{T} \cdot (X_T-X'_T) \bigr] = \EE \bigl[(X_T-X'_T)\cdot Y_T \bigr],
\end{split}
\end{equation}
where we used Fubini's theorem and the fact that the `tilde random variables' are independent copies of the `non-tilde variables'. Using the adjoint equation  and taking the expectation, we get:
\begin{equation*}
\begin{split}
&\EE \bigl[ (X_T-X'_T)\cdot Y_T \bigr] 
\\
&=\EE \biggl[ \int_0^T(X_t-X'_t) \cdot dY_t+\int_0^TY_t \cdot d[X_t- X'_t]
+\int_0^T[\sigma(t,\theta_t)-\sigma(t,\theta_t')]\cdot  Z_tdt \biggr]
\\
&=- \EE \int_0^T
\bigl[ 
\partial_xH(t,\Theta_{t}) \cdot (X_t-X'_t) + 
\t\EE \bigl[ \partial_\mu H(t,\t \Theta_t)(X_t) \bigr]  \cdot (X_t-X'_t)  \bigr] dt
\\
&\phantom{??????}+\EE \int_0^T 
\bigl[
[b(t,\theta_t)-b(t,\theta'_t)] \cdot Y_{t}   + [\sigma(t,\theta_t)-\sigma(t,\theta'_t)]\cdot  Z_t \bigr] dt,
\end{split}
\end{equation*}
where we used integration by parts and the fact that $Y_t$ solves the adjoint equation. 
Using Fubini's theorem and the fact that $\t \Theta_t$ is an independent copy of $\Theta_t$, the expectation of the second term in the second line  can be rewritten as
\begin{equation}
\label{eq:24:1:2}
\begin{split}
\EE \int_0^T \bigl\{ \t\EE \bigl[ \partial_\mu H(t,\t \Theta_t)(X_t) \bigr] \cdot (X_t-X'_t) \bigr\} dt
&= \EE \t \EE \int_0^T  \bigl\{ 
[ \partial_\mu H(t,\Theta_t)(\t X_t) ] \cdot (\tX_t-\tX'_t) \bigr\} dt
\\
&=\EE \int_0^T \t\EE \bigl[ \partial_\mu H(t,\Theta_t)(\t X_t)\cdot (\t X_t - \t X'_t) \bigr] dt.
\end{split}
\end{equation}
Consequently, by \eqref{eq:24:1:1}, \eqref{fo:g} and \eqref{eq:24:1:2}, we obtain
\begin{equation}
\label{eq:27:1:1}
\begin{split}
J(\alpha)-J(\alpha')&\le \EE\int_0^T[H(t,\Theta_t)-H(t,\Theta'_t)]dt
\\
&\phantom{????}-\EE\int_0^T \bigl\{ \partial_xH(t,\Theta_t) \cdot (X_t-X'_t) + 
\t\EE \bigl[ \partial_\mu H(t,\t \Theta_t)(X_t) \cdot (\t X_{t} - \t X'_{t} ) \bigr] \bigr\} dt
\\
&\le 0
\end{split}
\end{equation}
because of the convexity assumption on $H$, see in particular \eqref{fo:convexity}, and because of the criticality of the admissible control $(\alpha_t)_{0\le t\le T}$, see \eqref{fo:isaacs},
 which says the first order derivative in $\alpha$ vanishes.
\qed

\subsection{\textbf{Special Cases}}
\label{subs:examples}
We consider a set of particular cases which already appeared in the literature, and we provide the special forms of the Pontryagin maximum principle which apply in these
cases. We discuss only sufficient conditions for optimality for the sake of definiteness. The corresponding necessary conditions can easily be derived from the results of Subsection \ref{subsec:4:1}.

\subsubsection*{\textbf{Scalar Interactions}}
In this subsection we show how the model handled in \cite{AndersonDjehiche} appears as 
a specific example of our more general formulation.
We consider scalar interactions for which the dependence upon the probability measure of the coefficients of the dynamics of the state and the cost functions is through functions of scalar moments of the measure. More specifically, we assume that:
\begin{equation*}
\begin{array}{ll}
\displaystyle b(t,x,\mu,\alpha)= \h b(t,x,\langle\psi,\mu\rangle,\alpha) \quad &\displaystyle \sigma(t,x,\mu,\alpha)= \h\sigma(t,x,\langle\phi,\mu\rangle,\alpha)
\\
\displaystyle f(t,x,\mu,\alpha)= \h f(t,x,\langle\gamma,\mu\rangle,\alpha) \quad &\displaystyle g(x,\mu)= \h g(x,\langle\zeta,\mu\rangle)
\end{array}
\end{equation*}
for some scalar functions $\psi$, $\phi$, $\gamma$ and $\zeta$ with at most quadratic growth at $\infty$, and functions $\h b$, $\h \sigma$ and $\h f$ defined on $[0,T]\times \RR^d\times\RR\times A$ with values in $\RR^d$, $\RR^{d\times m}$ and $\RR$ respectively, and a real valued function $\h g$ defined on $\RR^d\times \RR$. We use the bracket notation $\langle h,\mu\rangle$ to denote the integral of the function $h$ with respect to the measure $\mu$. The functions $\h b$, $\h\sigma$, $\h f$ and $\h g$ are similar to the functions $b$, $\sigma$, $f$ and $g$ with the variable $\mu$, which was a measure, replaced by a numeric variable, say 
$r$.
Reserving the notation $H$ for the Hamiltonian we defined above, we have: 
$$
H(t,x,\mu,y,z,\alpha)=\h b(t,x,\langle \psi,\mu\rangle,\alpha)\cdot y +\h \sigma(t,x,\langle \phi,\mu\rangle,\alpha)\cdot z +\h f(t,x,\langle \gamma,\mu\rangle,\alpha).
$$
We then proceed to derive the particular form taken by the adjoint equation in the present situation. We start with the terminal condition as it is easier to identify.
According to \eqref{fo:adjoint}, 
it reads:
$$
Y_T=\partial_x g(X_T,\PP_{X_T}) + \t\EE[\partial_\mu g(\t X_T,\PP_{\t X_T})(X_T)].
$$
Since the terminal cost is of the form $g(x,\mu)=\h g(x,\langle\zeta,\mu\rangle)$, given our definition of differentiability with respect to the variable $\mu$, we know, as a generalization of \eqref{fo:Hofmu}, that $\partial_\mu g(x,\mu)(\,\cdot\,)$ reads
\begin{equation*}
\partial_\mu g(x,\mu)(x') = \partial_{r} \h g\bigl(x,\langle\zeta,\mu\rangle\bigr) \partial \zeta (x'), \quad x' \in \RR^d.
\end{equation*}
Therefore, the terminal condition $Y_{T}$ can be rewritten as
$$
Y_T=\partial_x \h g \bigl(X_T,\EE[\zeta(X_T)]\bigr) + \t\EE \bigl[\partial_{r} \h g \bigl(\t X_T,\EE[\zeta( X_T)] \bigr) \bigr] \partial \zeta(X_T)
$$
which is exactly the terminal condition used in \cite{AndersonDjehiche} once we remark that the  `tildes' can be removed since $\t X_T$ has the same distribution as $X_T$. Within this framework, convexity in $\mu$ is quite easy to check. Here is a typical example borrowed 
from \cite{AndersonDjehiche}: if $g$ and $\hat{g}$ do not depend on $x$, then the function ${\mathcal P}_{2}(\RR^d) \ni \mu \mapsto 
g(\mu) = \hat{g}(\langle \zeta,\mu \rangle)$ is convex if $\zeta$ is convex and $\hat{g}$ is non-decreasing and convex.

Similarly, $\partial_\mu H(t,x,\mu,y,z,\alpha)$ can be identified to the $\RR^d$-valued function defined by
\begin{eqnarray*}
&&\partial_\mu H(t,x,\mu,y,z,\alpha)(x')= \bigl[ \partial_{r} \h b(t,x,\langle \psi,\mu\rangle,\alpha)\odot y \bigr] \partial \psi(x') + 
\bigl[
\partial_{r} \h \sigma(t,x,\langle \phi,\mu\rangle,\alpha)\odot z\bigr] \partial \phi(x')
\\
&&\phantom{?????????????????????????????????????} +\partial_{r} \h f(t,x,\langle \gamma,\mu\rangle,\alpha)\;\partial \gamma(x')
\end{eqnarray*}
and the dynamic part of the adjoint equation \eqref{fo:adjoint} rewrites:
\begin{equation*}
\begin{split}
dY_t&=-\bigl\{ 
\partial_x \h b(t,X_t,\EE[\psi(X_t)],\alpha_t)\odot Y_t + \partial_x \h \sigma(t,X_t,\EE[\phi(X_t)],\alpha_t)\odot Z_t
\\
&\hspace{100pt} +
\partial_x \h f(t,X_t,\EE[\gamma(X_t)],\alpha_t) \bigr\} dt +Z_tdW_t
\\
&\hspace{15pt}- \bigl\{ 
\t \EE \bigl[
\partial_{r} \h b(t,\t X_t,\EE[\psi( X_t)],\t \alpha_t) \odot \t Y_{t}
\bigr]\partial \psi(X_t)+ \t\EE\bigl[ 
\partial_{r} \h \sigma(t,\t X_t,\EE[\phi( X_t)],\t \alpha_t)
\odot \t Z_{t}
\bigr]\partial \phi(X_t)
\\
&\hspace{100pt} +\t\EE \bigl[\partial_{r} \h f(t,\t X_t,\EE[\gamma( X_t)],\t \alpha_t) \bigr]
\partial \gamma(X_t) \bigr\} dt,
\end{split}
\end{equation*}
which again, is exactly the adjoint equation used in \cite{AndersonDjehiche} once we \emph{remove the `tildes'}.

\begin{remark}
The mean variance portfolio optimization example discussed in \cite{AndersonDjehiche} and the solution proposed in \cite{Bensoussanetal} and \cite{CarmonaDelarueLaChapelle} of the optimal control of linear-quadratic (LQ) McKean-Vlasov dynamics are based on the general form of the Pontryagin principle proven in this section as applied to the scalar interactions considered in this subsection. 
\end{remark}

\subsubsection*{\textbf{First Order Interactions}}
In the case of first order interactions, the dependence upon the probability measure is linear in the sense that the coefficients $b$, $\sigma$, $f$ and $g$ are given in the form 
\begin{equation*}
\begin{array}{ll}
\displaystyle b(t,x,\mu,\alpha)= \langle \h b(t,x,\,\cdot\,,\alpha),\mu\rangle \quad &\displaystyle \sigma(t,x,\mu,\alpha)= \langle\h\sigma(t,x,\,\cdot\,,\alpha),\mu\rangle
\\
\displaystyle f(t,x,\mu,\alpha)= \langle\h f(t,x,\,\cdot\,,\alpha),\mu\rangle \quad
&\displaystyle g(x,\mu)= \langle \h g(x,\,\cdot\,),\mu\rangle
\end{array}
\end{equation*}
for some functions  $\h b$, $\h \sigma$ and $\h f$ defined on $[0,T]\times \RR^d\times\RR^d\times A$ with values in $\RR^d$, $\RR^{d\times m}$ and $\RR$ respectively, and a real valued function $\h g$ defined on $\RR^d\times \RR^d$. The form of this dependence comes from the original derivation of the McKean-Vlasov equation as limit of the dynamics of a large system of particles evolving according to a system of stochastic differential equations with \emph{mean field} interactions  of the form
\begin{equation}
\label{fo:MFdynamics}
dX^{i}_t=\frac1N\sum_{j=1}^N\h b(t,X^{i}_t,X^{j}_t)dt+\frac1N\sum_{j=1}^N\h \sigma(t,X^{i}_t,X^{j}_t)dW_t^j, 
\quad i=1,\cdots, N, \quad 0\le t\le T,
\end{equation}
where $W^i$'s are $N$ independent standard Wiener processes in $\RR^d$. 
In the present situation the linearity in $\mu$ implies that  $\partial_\mu g(x,\mu)(x')=\partial_{x'}\h g(x,x')$ and similarly
\begin{eqnarray*}
&&\partial_\mu H(t,x,\mu,y,z,\alpha)(x')= \partial_{x'} \h b(t,x,x',\alpha)\odot y + \partial_{x'} \h \sigma(t,x,x',\alpha)\odot z
+\partial_{x'} \h f(t,x,x',\alpha),
\end{eqnarray*}
and the dynamic part of the adjoint equation \eqref{fo:adjoint} rewrites:
\begin{equation*}
\begin{split}
dY_t
&=-\t\EE \bigl[ \partial_x \h H(t,X_t,\t X_t,Y_t,Z_t,\alpha_t) + \partial_{x'} \h H(t,\t X_t, X_t,\t Y_t,\t Z_t,\t\alpha_t) \bigr]dt+Z_tdW_t,
\end{split}
\end{equation*}
if we use the obvious notation
$$
\h H(t,x,x',y,z,\alpha)=\h b(t,x,x',\alpha)\cdot y +\h \sigma(t,x,x',\alpha)\cdot z +\h f(t,x,x',\alpha),
$$
and the terminal condition is given by
$$
Y_T= \t\EE \bigl[\partial_x\h g(X_T,\t X_T)+\partial_{x'} \h g(\t X_T,X_T) \bigr].
$$

\section{Solvability of Forward-Backward Systems}
\label{se:fbsde}
We now turn to the application of the stochastic maximum principle to the solution of the optimal control of McKean-Vlasov dynamics. The strategy is to identify a minimizer of the Hamiltonian, and to use it in the forward dynamics and the adjoint equation. This creates a coupling between these equations, leading to the study of an  FBSDE of mean field type.  As explained in the introduction, the existence results proven in \cite{CarmonaDelarue_sicon} and \cite{CarmonaDelarue_ecp} do not cover some of the solvable models (such as the LQ models). Here we establish existence and uniqueness by taking advantage of the specific structure of the equation, inherited from the underlying optimization problem. Assuming that the terminal cost and the Hamiltonian satisfy the same convexity assumptions as in the statement of Theorem \ref{th:pontryagin}, we indeed prove that unique solvability holds by applying the \emph{continuation method}, originally exposed within the framework of FBSDEs in \cite{PengWu}. 
Some of the results of this section were announced in the note \cite{CarmonaDelarueLaChapelle}.

\subsection{Technical Assumptions}
We now formulate the assumptions we shall use from now on. These assumptions subsume the assumptions (A1-4) introduced in Sections \ref{se:mkv} and \ref{se:pontryagin}.
As it is most often the case in applications of Pontryagin's stochastic maximum principle, we choose $A= \RR^k$ and we consider a \emph{linear} model for the forward dynamics of the state.
\vskip 4pt

(B1) The drift $b$ and the volatility $\sigma$ are linear in $\mu$, $x$ and $\alpha$. They read
\begin{equation*}
\begin{split}
&b(t,x,\mu,\alpha) = b_{0}(t) + b_{1}(t)\o\mu + b_{2}(t) x + b_{3}(t) \alpha,
\\
&\sigma(t,x,\mu,\alpha) = \sigma_{0}(t) + \sigma_{1}(t) \o \mu + \sigma_{2}(t) x + \sigma_{3}(t) \alpha,
\end{split}
\end{equation*}
for some bounded measurable deterministic functions $b_{0}$, $b_{1}$, 
$b_{2}$ and $b_{3}$  with values in $\RR^d$, $\RR^{d \times d}$, $\RR^{d \times d}$ and $\RR^{d \times k}$
and $\sigma_{0}$, $\sigma_{1}$, $\sigma_{2}$ and $\sigma_{3}$ with values in $\RR^{d \times m}$, 
$\RR^{(d \times m) \times d}$, $\RR^{(d \times m) \times d}$ and $\RR^{(d \times m) \times k}$
(the parentheses around $d\times m$ indicating that $\sigma_{i}(t) u_{i}$ is seen as an element of $\RR^{d \times m}$ whenever $u_{i} \in \RR^d$, with $i=1,2$, or $u_{i} \in \RR^k$, with $i=3$), and where we use the notation 
$\o\mu=\int x\,d\mu(x)$ for the mean of a measure $\mu$.

\vskip 2pt

(B2) The functions $f$ and $g$ satisfy the same assumptions as in (A.3-4) in Section \ref{se:pontryagin} (with respect to some constant $L$). In particular, there exists a constant $\hat{L}$ such that
\begin{equation*}
\begin{split}
&\bigl\vert f(t,x',\mu',\alpha') - f(t,x,\mu,\alpha) \bigr\vert + \bigl\vert g(x',\mu') - g(x,\mu) \bigr\vert
\\
&\hspace{15pt} \leq \hat{L} \bigl[ 1 + \vert x' \vert + \vert x \vert + \vert \alpha' \vert +
\vert \alpha \vert + \|  \mu  \|_{2} + \|\mu' \|_{2} \bigr] \bigl[ \vert (x',\alpha') - (x,\alpha) \vert +
W_{2}(\mu',\mu) \bigr].
\end{split}
\end{equation*} 

(B3)  There exists a constant $\hat{c}>0$ such that the derivatives of $f$ and $g$ 
with respect to $(x,\alpha)$ and $x$ respectively
are $\hat{c}$-Lipschitz continuous with respect to $(x,\alpha,\mu)$ and $(x,\mu)$ respectively
(the Lipschitz property in the variable $\mu$ being understood in the sense of the 2-Wasserstein distance). Moreover, for any
$t \in [0,T]$, any  
$x,x' \in \RR^d$, any $\alpha,\alpha' \in \RR^k$, any 
$\mu,\mu' \in {\mathcal P}_{2}(\RR^d)$ and 
any $\RR^d$-valued random variables $X$ and $X'$ having $\mu$ and $\mu'$ as respective distributions, 
\begin{equation*}
\begin{split}
&\EE \bigl[ \vert \partial_{\mu} f(t,x',\mu',\alpha')(X') - \partial_{\mu} f(t,x,\mu,\alpha)(X) \vert^2 \bigr]
\leq \hat{c} \bigl( \vert (x',\alpha') - (x,\alpha) \vert^2 +
\EE \bigl[ \vert X'- X \vert^2\bigr] \bigr),
\\
 &\EE \bigl[ \vert \partial_{\mu} g(x',\mu')(X') - \partial_{\mu} g(x,\mu)(X) \vert^2 \bigr] 
 \leq \hat{c} \bigl( \vert x'-x \vert^2 + \EE \bigl[ \vert X'- X \vert^2 \bigr] \bigr). 
\end{split}
\end{equation*}
\vskip 2pt

(B4) The function $f$ is convex with respect to  $(x,\mu,\alpha)$ for $t$ fixed in such a way that, for some $\lambda >0$,
\begin{equation*}
\begin{split}
&f(t,x',\mu',\alpha') - f(t,x,\mu,\alpha) 
\\
&\hspace{15pt} - \partial_{(x,\alpha)} f(t,x,\mu,\alpha)\cdot (x'-x,\alpha'-\alpha)
- \tilde{\mathbb E} \bigl[\partial_{\mu} f(t,x,\mu,\alpha)(\t X) \cdot (\tilde{X}' - \tilde{X}) \bigr]
\\ 
 &\geq \lambda 
\vert \alpha' - \alpha \vert^2, 
\end{split}
\end{equation*} 
whenever $\tilde{X},\tilde{X}' \in L^2(\tilde{\Omega},\tilde{\mathcal A},\tilde{\mathbb P};\RR^d)$ with distributions $\mu$ and $\mu'$ respectively. The function $g$ is also assumed to be convex in $(x,\mu)$ (on the same model, but with $\lambda=0$). 
\vspace{4pt}

We refer to Subsection \ref{subse:joint} for a precise discussion about (B3) and (B4). 
By comparing \eqref{fo:Lip} with (B3), notice in particular that the liftings $L^2(\t \Omega; \RR^d) \ni \t X \mapsto f(t,x,\t \PP_{\t X}, \alpha)$ and $L^2(\t \Omega;\RR^d) \ni \t X \mapsto g(x,\t \PP_{\t X})$ have Lipschitz continuous derivatives. As a consequence, 
Lemma \ref{le:5} applies: For any $t \in [0,T]$, 
$x \in \RR^d$, $\mu \in {\mathcal P}_{2}(\RR^d)$ and $\alpha \in \RR^k$, there exist versions of
$\RR^d \ni x' \mapsto \partial_{\mu} f(t,x,\mu,\alpha)(x')$ and $\RR^d \ni x' \mapsto \partial_{\mu} g(x,\mu)(x')$ that are 
$\hat{c}$-Lipschitz continuous. 

Following Example \eqref{fo:Hofmu}, we also emphasize that $b$ and $\sigma$ obviously satisfy (B3). 

\subsection{The Hamiltonian and the Adjoint Equations}
The drift and the volatility being linear, the Hamiltonian has the form
\begin{equation*}
\begin{split}
H(t,x,\mu,y,z,\alpha) &= \bigl[b_0(t)+b_1(t)\bar{\mu}+b_2(t)x+b_3(t)\alpha\bigr]\cdot y  
\\
&\hspace{15pt}
+ \bigl[ \sigma_{0}(t) + \sigma_{1}(t) \bar{\mu} + \sigma_{2}(t) x + \sigma_{3}(t) \alpha \bigr] \cdot z
 + f(t,x,\mu,\alpha),
\end{split}
\end{equation*}
for $t \in [0,T]$, $x,y \in \RR^d$, $z \in \RR^{d \times m}$, $\mu \in \cP_2(\RR^d)$ and $\alpha \in \RR^k$.
Given $(t,x,\mu,y,z) \in [0,T] \times \RR^d \times {\mathcal P}_{2}(\RR^d) \times \RR^d \times \RR^{d \times m}$, the function $\RR^k \ni \alpha \mapsto H(t,x,\mu,y,z,\alpha)$ is strictly convex so that 
there exists a unique minimizer $\hat{\alpha}(t,x,\mu,y,z)$:
\begin{equation}
\label{fo:alphahat}
\hat{\alpha}(t,x,\mu,y,z) = \textrm{argmin}_{\alpha \in \RR^k} H(t,x,\mu,y,z,\alpha).
\end{equation}
Assumptions (B1-4) above being slightly stronger than the assumptions used in \cite{CarmonaDelarue_sicon}, we can follow the arguments given in the proof of Lemma 2.1 of \cite{CarmonaDelarue_sicon} in order to prove that,
for all $(t,x,\mu,y,z)\in [0,T] \times \RR^d \times {\mathcal P}_{2}(\RR^d) \times \RR^d \times \RR^{d \times m}$,
the function $[0,T] \times \RR^d \times {\mathcal P}_{2}(\RR^d) \times \RR^d \times \RR^{d \times m}
\ni (t,x,\mu,y,z)   \mapsto \hat{\alpha}(t,x,\mu,y,z)$ is measurable, locally bounded and Lipschitz-continuous with respect to 
$(x,\mu,y,z)$, uniformly in $t\in [0,T]$, the Lipschitz constant depending only upon $\lambda$, the supremum norms 
of $b_{3}$ and $\sigma_{3}$ and the Lipschitz 
constant of $\partial_{\alpha}f$ in $(x,\mu)$. Except maybe for the Lipschitz property with respect to the measure argument, these facts were explicitly proved in
\cite{CarmonaDelarue_sicon}. The regularity of $\hat{\alpha}$ with respect to $\mu$ follows from the following remark.
If $(t,x,y,z) \in  [0,T] \times \RR^d \times \RR^d \times \RR^{d \times m}$ is fixed and $\mu,\mu'$ are generic elements in ${\mathcal P}_{2}(\RR^d)$,
$\hat{\alpha}$ and $\hat{\alpha}'$ denoting the associated minimizers, we deduce from the convexity assumption (B4):
\begin{equation}
\label{eq:17:3:5}
\begin{split}
2\lambda \vert \hat{\alpha}' - \hat{\alpha} \vert^2
&\leq ( \hat{\alpha}' - \hat{\alpha})\cdot \bigl[ \partial_{\alpha} f \bigl(t,x,\mu,\hat{\alpha}'\bigr)
- \partial_{\alpha} f \bigl(t,x,\mu,\hat{\alpha} \bigr)\bigr]
\\
&= ( \hat{\alpha}' - \hat{\alpha}) \cdot \bigl[ \partial_{\alpha} H \bigl(t,x,\mu,y,z,\hat{\alpha}'\bigr)
- \partial_{\alpha} H \bigl(t,x,\mu,y,z,\hat{\alpha} \bigr)\bigr]
\\
&= ( \hat{\alpha}' - \hat{\alpha}) \cdot \bigl[ \partial_{\alpha} H \bigl(t,x,\mu,y,z,\hat{\alpha}'\bigr)
- \partial_{\alpha} H \bigl(t,x,\mu',y,z,\hat{\alpha}' \bigr) \bigr]
\\
&= (\hat{\alpha}' - \hat{\alpha}) \cdot \bigl[\partial_{\alpha} f \bigl(t,x,\mu,\hat{\alpha}'\bigr)
- \partial_{\alpha} f \bigl(t,x,\mu',\hat{\alpha}' \bigr)\bigr]
\\
&\leq C \vert \hat{\alpha}' - \hat{\alpha}\vert\;  W_{2}(\mu',\mu),
\end{split}
\end{equation}
the passage from the second to the third following from the identity 
\begin{equation*}
\partial_{\alpha} 
H(t,x,\mu,y,z,\hat{\alpha}) = \partial_{\alpha} H(t,x,\mu',y,z,\hat{\alpha}')=0.
\end{equation*}
For each admissible control $\alpha=(\alpha_t)_{0 \leq t \leq T}$, if we denote the corresponding solution of the state equation by $ X=(X_t^\alpha)_{0 \leq t \leq T}$, then the adjoint BSDE \eqref{fo:adjoint} introduced in Definition \ref{de:adjoint} reads:
\begin{equation}
\begin{split}
&dY_{t} = - \partial_{x} f\bigl(t,X_{t},\PP_{X_t},\alpha_t \bigr) dt 
- b_{2}^{\dagger}(t) Y_{t} dt - \sigma_{2}^{\dagger}(t) Z_{t} dt + Z_{t}dW_{t}
\\
&\hspace{30pt} - \tilde{\mathbb E} \bigl[\partial_{\mu} f\bigl(t,\tilde{X}_{t},\PP_{X_{t}},\t \alpha_t \bigr) (X_{t}) \bigr]dt - b^{\dagger}_{1}(t) {\mathbb E} [ Y_{t} ] - \sigma_{1}^{\dagger}(t)  \EE [  Z_{t} ] dt. 
\end{split}
\end{equation}
Given the necessary and sufficient conditions proven in the previous section, our goal is to use the control $(\hat\alpha_t)_{0 \leq t \leq T}$ defined by $\hat\alpha_t=\hat\alpha(t,X_t,\PP_{X_t},Y_t,Z_{t})$ where $\hat\alpha$ is the minimizer function constructed above and the process $(X_t,Y_t,Z_{t})_{0 \leq t \leq T}$ is a solution of the FBSDE
\begin{equation}
\label{fo:mkv_fbsde}
\begin{cases}
&\begin{split}
&dX_{t} = \bigl[b_0(t)+b_1(t)\EE[X_t]+b_2(t)X_t+b_3(t)\hat\alpha(t,X_t,\PP_{X_t},Y_t,Z_{t}) \bigr] dt 
\\
&\hspace{10pt}+ \bigl[ \sigma_{0}(t) + \sigma_{1}(t) \EE[X_{t}] + \sigma_{2}(t) X_{t} + \sigma_{3}(t)
\hat{\alpha}(t,X_{t},\PP_{X_{t}},Y_{t},Z_{t}) \bigr] dW_{t},
\end{split}
\\
&\begin{split}
&dY_{t} = - \bigl[ \partial_{x} f\bigl(t,X_{t},\PP_{X_t},\hat{\alpha}(t,X_{t},\PP_{X_{t}},Y_{t},Z_{t}) \bigr)  
+b_{2}^{\dagger}(t) Y_{t} 
+\sigma_{2}^{\dagger}(t) Z_{t} \bigr] dt
+ Z_{t}dW_{t}
\\
&\hspace{10pt} - \bigl\{ \tilde{\mathbb E} \bigl[\partial_{\mu} f\bigl(t,\tilde{X}_{t},\PP_{X_{t}},\hat{\alpha}(t,\tilde{X}_{t},\PP_{X_{t}},
\tilde{Y}_{t},\tilde{Z}_{t}) \bigr)(X_{t}) \bigr] + b_{1}^{\dagger}(t) \EE [ Y_{t} ] +\sigma_{1}^{\dagger}(t) \EE [ Z_{t} ] 
\bigr\} dt,
\end{split}
\end{cases}
\end{equation}
 with the initial condition $X_{0}=x_{0}$, for a given deterministic point $x_{0} \in \RR^d$, and the terminal condition $Y_{T} = \partial_{x} g(X_{T},\PP_{X_{T}}) + \tilde{\EE}[\partial_{\mu} g(\tilde{X}_{T},\PP_{X_t})(X_{T})]$. 

\subsection{Main Result} 
Here is the main existence and uniqueness result:
\begin{theorem}
\label{th:sol:mkv}
Under (B1-4), the forward-backward system \eqref{fo:mkv_fbsde} is uniquely solvable. 
\end{theorem}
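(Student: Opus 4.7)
The plan is to apply the continuation method of Peng--Wu \cite{PengWu}, suitably adapted to the McKean--Vlasov setting. I introduce a one-parameter family of perturbed FBSDEs indexed by $\tau \in [0,1]$ that interpolates between a trivially solvable system at $\tau = 0$ and the target system \eqref{fo:mkv_fbsde} at $\tau = 1$. Concretely, for inputs $(I_{t},J_{t},K_{t})_{0 \leq t \leq T}$ in $\HH^{2,d} \times \HH^{2,d \times m} \times \HH^{2,d}$ and a terminal datum $\xi \in L^2(\Omega,\cF_{T};\RR^d)$, I consider the system where each of the McKean--Vlasov ingredients (the contributions coming from $\partial_{x}f$, $\partial_{\mu} f$, $\partial_{x} g$, $\partial_{\mu} g$ and the driver $\hat{\alpha}$) is multiplied by $\tau$, to which is added $(1-\tau)$ times a linear term guaranteeing a trivial decoupled problem at $\tau=0$ (the forward equation being a linear SDE and the backward one a linear BSDE with Lipschitz data). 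The goal is to show that the set $\mathcal{T} \subset [0,1]$ of values of $\tau$ for which this system is uniquely solvable for every choice of inputs and $\xi$ is nonempty, open and closed in $[0,1]$, hence equal to $[0,1]$.

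The crucial ingredient is a $\tau$-uniform a priori estimate derived from a monotonicity formula. For two solutions $(X^i,Y^i,Z^i)_{i=1,2}$ of the perturbed system at the same level $\tau$ (corresponding to different inputs $(I^i,J^i,K^i,\xi^i)$), I apply It\^o's formula to the pairing $(X^1_{t}-X^2_{t}) \cdot (Y^1_{t}-Y^2_{t})$ on $[0,T]$ and take expectations. Using the linearity of $b,\sigma$ in $(x,\mu,\alpha)$, the explicit definition of $\hat{\alpha}$ in \eqref{fo:alphahat}, the convexity of $g$ in $(x,\mu)$, and the $\lambda$-strong convexity of $f$ in $\alpha$ from (B4), together with a Fubini manipulation that swaps the ``tilde'' and ``non-tilde'' variables exactly as in Lemma \ref{le:duality} and in the proof of Theorem \ref{th:pontryagin}, the cross terms coming from $\partial_{\mu} f$ and $\partial_{\mu} g$ cancel in the right way, and I obtain an inequality of the shape
\begin{equation*}
\EE \bigl[ ( Y^1_{T} - Y^2_{T}) \cdot ( X^1_{T} - X^2_{T} ) - ( Y^1_{0} - Y^2_{0}) \cdot ( X^1_{0} - X^2_{0} ) \bigr] \geq \tau \lambda \, \EE \int_{0}^T \bigl\vert \hat{\alpha}^1_{t} - \hat{\alpha}^2_{t} \bigr\vert^2 dt + R_{\tau},
\end{equation*}
where $R_{\tau}$ collects the contributions of the inputs. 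Combining this identity with standard $L^2$ estimates for linear SDEs and BSDEs and with the Lipschitz continuity of $\hat{\alpha}$ in $(x,\mu,y,z)$ established after \eqref{fo:alphahat}, I deduce a stability estimate
\begin{equation*}
\EE \Bigl[ \sup_{0 \leq t \leq T} \vert \Delta X_{t} \vert^2 + \sup_{0 \leq t \leq T} \vert \Delta Y_{t} \vert^2 + \int_{0}^T \vert \Delta Z_{t} \vert^2 dt \Bigr] \leq C \bigl( \vert \Delta x_{0} \vert^2 + \EE[\vert \Delta \xi \vert^2] + \| (\Delta I, \Delta J, \Delta K) \|^2 \bigr),
\end{equation*}
with a constant $C$ independent of $\tau \in [0,1]$.

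With this estimate in hand, the openness of $\mathcal{T}$ is proved by a standard fixed-point argument: if $\tau_{0} \in \mathcal{T}$ and $\delta >0$ is small, then given a guess $(X,Y,Z)$ one solves the system at level $\tau_{0}$ with inputs absorbing the extra $\delta$-contributions coming from the McKean--Vlasov terms at level $\tau_{0}+\delta$. The $\tau$-uniform a priori estimate makes this map a contraction on $\HH^{2,d} \times \HH^{2,d} \times \HH^{2,d \times m}$ as soon as $\delta$ is smaller than some threshold $\delta_{0}>0$ depending only on $T$, $\lambda$, the bounds in (B1--4) and the Lipschitz constant of $\hat{\alpha}$, but \emph{not} on $\tau_{0}$; this gives unique solvability at $\tau_{0}+\delta$. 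Since $0 \in \mathcal{T}$ (trivially), iterating in increments of size $\delta_{0}$ reaches $\tau=1$, which is the system \eqref{fo:mkv_fbsde}.

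The step I expect to be the main obstacle is the monotonicity computation itself: the measure-valued arguments bring in the cross-expectations $\t \EE[\partial_{\mu} f(t,\t X_{t},\PP_{X_{t}},\hat{\alpha}(t,\t X_{t},\ldots))(X_{t})]$, and it must be checked that after swapping the tilde and non-tilde copies via Fubini, all these terms line up with what the convexity assumption (B4) controls when applied to the coupled pair $((X^1_{t},\hat{\alpha}^1_{t}),\PP_{X^1_{t}})$ versus $((X^2_{t},\hat{\alpha}^2_{t}),\PP_{X^2_{t}})$, so that the strict convexity modulus $\lambda$ survives on the $\alpha$-difference and that all other measure-derivative contributions carry the right sign. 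Once this is in place, the derivation of the a priori estimate and the implementation of the continuation scheme are fairly routine.
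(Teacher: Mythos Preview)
Your proposal is correct and follows essentially the same approach as the paper: continuation method \`a la Peng--Wu, with the key stability estimate obtained by pairing the forward and backward increments via It\^o, exploiting the convexity of $g$ and the $\lambda$-strong convexity of $f$ in $\alpha$ (with the Fubini/tilde-swap handling the $\partial_\mu$ terms exactly as in the proof of Theorem~\ref{th:pontryagin}), and then propagating solvability in uniform increments. The only cosmetic differences are that the paper parameterizes the family by simply multiplying all coefficients by $\gamma$ and adding free inputs (rather than adding a $(1-\tau)$-linear piece), and derives the monotonicity inequality by computing $\EE[(X'_T-X_T)\cdot Y_T]$ and then symmetrizing, rather than working directly with $(X^1-X^2)\cdot(Y^1-Y^2)$; these are equivalent organizations of the same computation.
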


\begin{proof}
The proof is an adaptation of the \emph{continuation method} used in \cite{PengWu} to handle standard FBSDEs satisfying appropriate monotonicity conditions. 
Generally speaking, it consists in proving that existence and uniqueness are kept preserved when the coefficients in \eqref{fo:mkv_fbsde}
are slightly perturbed. Starting from an initial case for which existence and uniqueness are known to hold, we then establish Theorem 
\ref{th:sol:mkv} by modifying iteratively the coefficients so that \eqref{fo:mkv_fbsde} is eventually shown to belong to the class of uniquely solvable systems. 

A natural and simple strategy then consists in modifying the coefficients in a linear way. Unfortunately, this might generate heavy notations.  For that reason, we use the following conventions. 

First, as in Subsection \ref{subsec:4:1}, the notation $(\Theta_{t})_{0 \leq t \leq T}$ stands for the generic notation for
denoting a process of the form $(X_{t},\PP_{X_{t}},Y_{t},Z_{t},\alpha_{t})_{0 \leq t \leq T}$ with values in $\RR^d \times {\mathcal P}_{2}(\RR^d)  \times \RR^d \times \RR^{d\times m} \times \RR^k$. 
We will then denote by ${\mathbb S}$ the space of processes 
$(\Theta_{t})_{0 \leq t \leq T}$ such that $(X_{t},Y_{t},Z_{t},\alpha_{t})_{0 \leq t \leq T}$ is $({\mathcal F}_{t})_{0 \leq t \leq T}$ progressively-measurable, $(X_{t})_{0 \leq t \leq T}$ and $(Y_{t})_{0 \leq t \leq T}$ have continuous trajectories, and 
\begin{equation}
\label{eq:norm S}
\| \Theta \|_{\mathbb S} =
\EE \biggl[ \sup_{0 \leq t \leq T} \bigl[ 
\vert X_{t} \vert^2 + \vert Y_{t} \vert^2 \bigr] + \int_{0}^T \bigl[ \vert Z_{t} \vert^2 + \vert \alpha_{t}\vert^2 \bigr] dt
\biggr]^{1/2} < +\infty. 
\end{equation}
Similarly, the notation 
$(\theta_{t})_{0 \leq t \leq T}$ is the generic notation for
denoting a process $(X_{t},\PP_{X_{t}},\alpha_{t})_{0 \leq t \leq T}$ with values in 
$\RR^d \times {\mathcal P}_{2}(\RR^d) \times \RR^k$. All the processes $(\theta_{t})_{0 \leq t \leq T}$ that are considered below appear as the restrictions of an \emph{extended} process 
$(\Theta_{t})_{0 \leq t \leq T} \in {\mathbb S}$. 

Moreover, we call an initial condition for \eqref{fo:mkv_fbsde} a square-integrable ${\mathcal F}_{0}$-measurable random variable $\xi$ with values in $\RR^d$, that is an element of $L^2(\Omega,{\mathcal F}_{0},\PP;\RR^d)$. Recall indeed that ${\mathcal F}_{0}$ can be chosen as a $\sigma$-algebra independent of $(W_{t})_{0 \leq t \leq T}$. 
In comparison with the statement of Theorem \ref{th:sol:mkv}, this permits to generalize the case when $\xi$ is deterministic.

Finally, we call an input for \eqref{fo:mkv_fbsde} a four-tuple ${\mathcal I}=(
({\mathcal I}^b_{t},
\Is_{t},\If_{t})_{0 \leq t \leq T},{\mathcal I}^g_{T})$,  $(\Ib)_{0 \leq t \leq T}$, 
$(\Is)_{0 \leq t \leq T}$ and $(\If)_{0 \leq t \leq T}$ being three square integrable progressively-measurable processes with values in $\RR^d$, $\RR^{d \times m}$ and $\RR^d$ respectively, and 
$\Ig$ denoting a square-integrable ${\mathcal F}_{T}$-measurable random variable with values in $\RR^d$. Such an input is specifically designed to be injected into the dynamics of \eqref{fo:mkv_fbsde}, $\Ib$ being plugged into the drift of the forward equation, $\Is$ into the volatility of the forward equation, $\If$ into the bounded variation term of the backward equation and $\Ig$ into the terminal condition of the backward equation. The space of inputs is denoted by ${\mathbb I}$. It is endowed with the norm:
\begin{equation}
\label{eq:norm I}
\| {\mathcal I} \|_{\mathbb I} =
\EE \biggl[ \vert \Ig_{T} \vert^2 + \int_{0}^T \bigl[ \vert \Ib_{t} \vert^2 + \vert \Is_{t} \vert^2 + \vert \If_{t} \vert^2 \bigr] dt
\biggr]^{1/2}. 
\end{equation}

\vspace{4pt}

We then put:

\begin{definition}
\label{le:small NL}
For any $\gamma\in [0,1]$, any  
$\xi \in L^2(\Omega,{\mathcal F}_{0},\PP;\RR^d)$ and any input ${\mathcal I} \in {\mathbb I}$, the FBSDE 
\begin{equation}
\label{eq:27:1:4}
\begin{split}
&dX_{t} = \bigl( \gamma b(t,\theta_{t}) + \Ib_{t} \bigr) dt + \bigl( \gamma \sigma(t,\theta_{t}) + \Is_{t} \bigr) dW_{t},
\\
&dY_{t} = - \bigl( \gamma \bigl\{ \partial_{x} H(t,\Theta_{t}) +
 \t \EE \bigl[ \partial_{\mu} H(t,\tilde{ \Theta}_{t})(X_{t}) \bigr] \bigr\}  + \If_{t}\bigr) dt
 + Z_{t} dW_{t}, \quad t \in [0,T],
\end{split}
\end{equation}
with the optimality condition
\begin{equation}
\label{eq:1:2:1}
\alpha_{t} = \hat{\alpha}(t,X_{t},\PP_{X_{t}},Y_{t},Z_{t}), \quad t \in [0,T],
\end{equation}
and  with $X_{0}=\xi$ as initial condition and 
$$Y_{T} = \gamma \bigl\{\partial_{x} g(X_{T},\PP_{X_{T}})
+ \t \EE [\partial_{\mu} g(\t X_{T},\PP_{X_{T}})(X_{T})] \bigr\} + \Ig_{T}$$ 
as terminal condition, 
is referred to as ${\mathcal E}(\gamma,\xi,{\mathcal I})$.

Whenever $(X_{t},Y_{t},Z_{t})_{0 \leq t \leq T}$ is a solution, the full process $(X_{t},\PP_{X_{t}},Y_{t},Z_{t},\alpha_{t})_{0 \leq t \leq T}$ is referred to as the  \emph{associated extended solution}.
\end{definition}

\begin{remark}
\label{rem:notation}
The way the coupling is summarized between the forward and backward equations in \eqref{eq:27:1:4} is a bit different from the way Equation \eqref{fo:mkv_fbsde} is written. In the formulation used in the statement of Lemma \ref{le:small NL}, the coupling between the forward and the backward equations follows from the optimality condition \eqref{eq:1:2:1}. Because of that optimality condition, 
the two formulations are equivalent: When $\gamma=1$ and ${\mathcal I}\equiv0$, the pair \emph{(\ref{eq:27:1:4}--\ref{eq:1:2:1})} coincides with \eqref{fo:mkv_fbsde}. 
\end{remark}

The following lemma is proved in the next subsection:

\begin{lemma}
\label{le:fixed point}
Given $\gamma \in  [0,1]$, we say the property $({\mathcal S}_{\gamma})$ holds true if, 
 for any $\xi \in L^2(\Omega,{\mathcal F}_{0},\PP;\RR^d)$ and any ${\mathcal I} \in {\mathbb I}$, 
the FBSDE ${\mathcal E}(\gamma,\xi,{\mathcal I})$ 
has a unique extended solution in ${\mathbb S}$. With this definition, 
 there exists $\delta_{0}>0$ such that, if $({\mathcal S}_{\gamma})$ holds true for some $\gamma \in [0,1)$, then $({\mathcal S}_{\gamma+\eta})$ holds true for any 
 $\eta \in (0,\delta_{0}]$ satisfying $\gamma + \eta \leq 1$. 
\end{lemma}

Given Lemma \ref{le:fixed point}, Theorem \ref{th:sol:mkv} follows from a straightforward induction as $({\mathcal S}_{0})$ obviously holds true. 
\end{proof}

\subsection{Proof of Lemma\ref{le:fixed point}}
%
The proof follows from Picard's contraction theorem. As in the statement, consider indeed $\gamma$ such that $({\mathcal S}_{\gamma})$ holds true. For $\eta >0$, $\xi \in L^2(\Omega,{\mathcal F}_{0},\PP;\RR^d)$ and ${\mathcal I} \in {\mathbb I}$, we then define a mapping $\Phi$ from ${\mathbb S}$ into itself whose fixed points coincide with 
the solutions of ${\mathcal E}(\gamma+\eta,\xi,{\mathcal I})$. 

The definition of $\Phi$ is as follows. Given a process $\Theta \in {\mathbb S}$, we denote by $\Theta'$ the extended solution
of the FBSDE ${\mathcal E}(\gamma,\xi,{\mathcal I}^{\prime})$ with 
\begin{equation*}
\begin{split}
&\Ibp_{t} = \eta b(t,\theta_{t}) + \Ib_{t},
\\
&\Isp_{t} = \eta \sigma(t,\theta_{t}) + \Ib_{t},
\\
&\Ifp_{t} = \eta \partial_{x} H(t,\Theta_{t}) + \eta \t \EE \bigl[ \partial_{\mu} H(t,\tilde{ \Theta}_{t})(X_{t}) \bigr] + \If_{t},
\\
&\Igp_{T} = \eta \partial_{x} g(X_{T},\PP_{X_{T}}) + \eta \t \EE \bigl[ 
\partial_{\mu} g(\t X_{T},\PP_{X_{T}})(X_{T}) \bigr]
+ \Ig_{T}. 
\end{split}
\end{equation*}
By assumption, it is uniquely defined and it belongs to ${\mathbb S}$, so that the mapping $\Phi : \Theta \mapsto \Theta'$ maps ${\mathbb S}$ into itself. It is then clear that a process $\Theta \in {\mathbb S}$ is a fixed point of $\Phi$ if and only if $\Theta$ is an extended solution of ${\mathcal E}(\gamma + \eta,\xi,{\mathcal I})$. The point is thus to prove that $\Phi$ is a contraction when $\eta$ is small enough. This is a consequence of the following lemma: 

\begin{lemma}
\label{le:stability}
Let $\gamma \in [0,1]$ such that $({\mathcal S}_{\gamma})$ holds true. Then, there exists a constant $C$, independent of 
$\gamma$, such that, for any $\xi,\xi' \in L^2(\Omega,{\mathcal F}_{0},\PP;\RR^d)$ and ${\mathcal I},{\mathcal I}' \in {\mathbb I}$, the respective extended solutions $\Theta$ and $\Theta'$ of ${\mathcal E}(\gamma,\xi,{\mathcal I})$ and ${\mathcal E}(\gamma,\xi',{\mathcal I}')$ satisfy:
\begin{equation*}
\| \Theta - \Theta' \|_{\mathbb S} \leq C \bigl( {\mathbb E} \bigl[ \vert \xi - \xi' \vert^2 \bigr]^{1/2}
+ \| {\mathcal I} - {\mathcal I}' \|_{\mathbb I} \bigr).  
\end{equation*}
\end{lemma}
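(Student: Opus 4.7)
Let $\delta X = X - X'$, $\delta Y = Y - Y'$, $\delta Z = Z - Z'$, $\delta \alpha = \alpha - \alpha'$, $\Delta \xi = \xi - \xi'$, and $\Delta {\mathcal I}^{\bullet} = {\mathcal I}^{\bullet} - {\mathcal I}^{\bullet \prime}$. By (B1), the triple $(\delta X, \delta Y, \delta Z)$ solves a linear McKean--Vlasov FBSDE whose source terms are $\delta \alpha$, the Jacobian increments of $f$ and $g$, and the input differences. My strategy follows the Peng--Wu continuation template: derive a convexity-based a priori estimate on $\EE \int_0^T |\delta \alpha_t|^2 dt$ through an Itô computation, then close by combining it with standard linear SDE/BSDE estimates together with the Lipschitz continuity of $\hat \alpha$ established just before the proof.

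\textbf{Itô--convexity identity.} Apply Itô's formula to $\delta Y_t \cdot \delta X_t$ on $[0,T]$ and take expectation. Using (i) the decomposition $H = b \cdot y + \sigma \cdot z + f$ with $b, \sigma$ linear in $(x,\mu,\alpha)$, (ii) the Fubini identity that swaps $(\omega,\tilde\omega)$ in the $\partial_\mu$-terms (and the fact that $\partial_\mu b, \partial_\mu \sigma$ are deterministic functions of $(y,z)$), and (iii) the optimality condition $\partial_\alpha H(t, \Theta_t) = 0$, equivalent to $b_{3}^{\dagger}(t) Y_t + \sigma_{3}^{\dagger}(t) Z_t + \partial_\alpha f(t, \theta_t) = 0$, the integrand simplifies: all linear-in-$(\delta X, \delta Y, \delta Z)$ terms carrying the coefficients $b_{1,2}, \sigma_{1,2}$ cancel, while the $b_{3}, \sigma_{3}$ contributions combine with $\Delta \partial_\alpha f$ through optimality. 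Applying the convexity inequality (B4) for $f$ (yielding the extra $2\lambda |\delta \alpha_t|^2$) and for $g$ (yielding a non-negative terminal boundary term which is dropped) produces the key bound
\begin{equation*}
2\gamma\lambda \, \EE \! \int_0^T |\delta \alpha_t|^2 \, dt \leq \EE[\delta Y_0 \cdot \Delta \xi] - \EE[\Delta {\mathcal I}^g_T \cdot \delta X_T] + \EE \! \int_0^T \bigl[ \delta Y_t \cdot \Delta {\mathcal I}^b_t + \delta Z_t \cdot \Delta {\mathcal I}^{\sigma}_t - \delta X_t \cdot \Delta {\mathcal I}^f_t \bigr] dt.
\end{equation*}

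\textbf{Closure.} Standard Gronwall estimates on the linear forward SDE for $\delta X$ yield
$\EE \sup_{t \leq T} |\delta X_t|^2 \leq C\bigl(\EE|\Delta\xi|^2 + \gamma^2 \|\delta \alpha\|_{L^2}^2 + \|\Delta {\mathcal I}^b\|_{L^2}^2 + \|\Delta {\mathcal I}^{\sigma}\|_{L^2}^2\bigr)$,
with $C$ independent of $\gamma \in [0,1]$ (since $\gamma$ multiplies bounded coefficients), and classical BSDE stability gives an analogous bound on $\EE \sup_{t \leq T} |\delta Y_t|^2 + \EE \int_0^T |\delta Z_t|^2 dt$. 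The Lipschitz property of $\hat \alpha$ on $(x,\mu,y,z)$ derived in \eqref{eq:17:3:5} also provides $\|\delta \alpha\|_{L^2}^2 \leq C(\|\delta X\|_{L^2}^2 + \|\delta Y\|_{L^2}^2 + \|\delta Z\|_{L^2}^2)$. Bounding the right-hand side of the displayed inequality by Cauchy--Schwarz and Young's inequality, substituting the SDE/BSDE estimates, and absorbing the resulting $\gamma^2 \|\delta \alpha\|^2$ cross-terms into the left gives $\|\delta \alpha\|_{L^2}^2 \leq C(\EE|\Delta\xi|^2 + \|{\mathcal I} - {\mathcal I}'\|_{\mathbb I}^2)$; plugging this back into the SDE/BSDE bounds then upgrades to the full control of $\|\Theta - \Theta'\|_{\mathbb S}$.

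\textbf{Main obstacle.} The chief technical subtlety is the $\gamma$-uniformity of $C$: the factor $\gamma \lambda$ in front of $\|\delta \alpha\|^2$ on the left of the Itô--convexity bound cannot be inverted when $\gamma$ is small, while the SDE/BSDE estimates only give a $\gamma^2$ factor on $\|\delta \alpha\|^2$. I expect to handle this by splitting the range of $\gamma$: for $\gamma \geq \gamma_0$, where $\gamma_0 > 0$ depends only on the structural parameters $L, \hat c, \lambda, T$ and the bounds in (B1), the convexity estimate closes with a constant depending only on $\gamma_0$; for $\gamma \leq \gamma_0$ the $\gamma^2$ factor is already small enough that a direct absorption using only the Lipschitz bound on $\hat \alpha$ closes the loop without recourse to convexity. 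Combining both regimes yields a constant $C$ independent of $\gamma$.
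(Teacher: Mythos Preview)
Your proposal is correct and follows the same Peng--Wu continuation template as the paper: an It\^o/convexity identity yielding control of $\gamma\,\EE\int_0^T|\delta\alpha_t|^2\,dt$, combined with linear SDE/BSDE stability bounds and the Lipschitz property of $\hat\alpha$. The only notable difference lies in what you flag as the ``main obstacle''. You propose to split into the regimes $\gamma\ge\gamma_0$ (convexity closes) and $\gamma<\gamma_0$ (the $\gamma^2$ factor allows direct absorption via Lipschitz $\hat\alpha$ alone). This works, but the paper avoids the split by a single $\varepsilon$--Young trick: writing the convexity bound as $\gamma\,\|\delta\alpha\|_{L^2}^2 \le \varepsilon\,\|\Theta-\Theta'\|_{\mathbb S}^2 + C\varepsilon^{-1}\|\text{data}\|^2$ and observing that the SDE/BSDE estimates carry exactly a factor $\gamma$ (since $\gamma^2\le\gamma$ on $[0,1]$) in front of $\|\delta\alpha\|_{L^2}^2$. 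Substituting, the $\gamma$'s cancel, leaving $\|\Theta-\Theta'\|_{\mathbb S}^2 \le C\varepsilon\,\|\Theta-\Theta'\|_{\mathbb S}^2 + C\varepsilon^{-1}\|\text{data}\|^2$ uniformly in $\gamma$, and one simply chooses $\varepsilon$ small. Your route is valid; the paper's is cleaner.
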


Given Lemma \ref{le:stability}, we indeed check that $\Phi$ is a contraction when $\eta$ is small enough. Given $\Theta^1$ and 
$\Theta^2$ two processes in ${\mathbb S}$ and denoting by $\Theta^{\prime,1}$ and $\Theta^{\prime,2}$ their respective 
images by $\Phi$, we deduce from Lemma \ref{le:stability} that 
\begin{equation*}
\| \Theta^{\prime,1} - \Theta^{\prime,2} \|_{\mathbb S} \leq C \eta
 \| \Theta^{1} - \Theta^{2} \|_{\mathbb S},  
\end{equation*}
which is enough to conclude. 
\subsection{Proof of Lemma \ref{le:stability}}

The strategy follows from a mere variation on the proof of the classical stochastic maximum principle. 
With the same notations as in the statement and with the classical convention for expanding $\Theta$ as 
$(X_{t},\PP_{X_{t}},Y_{t},Z_{t},\alpha_{t})_{0 \leq t \leq T}$ and for letting 
$(\theta_{t}=(X_{t},\PP_{X_{t}},\alpha_{t}))_{0 \leq t \leq T}$, we then compute
\begin{equation*}
\begin{split}
\EE \bigl[ (X_T'-X_T)\cdot Y_T \bigr] 
&= \EE \bigl[ (\xi'-\xi)\cdot Y_0 \bigr] 
\\
&\hspace{5pt}- \gamma \biggl\{  \EE \int_0^T
\bigl[ 
\partial_xH(t,\Theta_{t}) \cdot (X_t'-X_t)  + 
\t\EE \bigl[ \partial_\mu H(t,\t \Theta_t)(X_t) \bigr]  \cdot (X_t'-X_t)  \bigr] dt
\\
&\hspace{20pt}- \EE \int_0^T 
\bigl[
[b(t,\theta_t')-b(t,\theta_t)] \cdot Y_{t}   + [\sigma(t,\theta_t')-\sigma(t,\theta_t)]\cdot  Z_t \bigr] dt \biggr\}
\\
&\hspace{5pt} - \biggl\{ 
\EE \int_{0}^T \bigl[ (X_{t}'-X_{t}) \cdot \If_{t} + (\Ib_{t} - \Ibp_{t}) \cdot Y_{t} + (\Is_{t}-\Isp_{t}) \cdot Z_{t} \bigr] 
dt \biggr\}
\\
&= T_{0} - \gamma T_{1} - T_{2}.  
\end{split}
\end{equation*}
Moreover, following \eqref{fo:g},
\begin{equation*}
\begin{split}
\EE \bigl[(X_T'-X_T)\cdot Y_T \bigr]
&= \gamma \EE \bigl[ \bigl( \partial_xg(X_T,\PP_{X_T})+ \t\EE[\partial_\mu g(\t X_T, \PP_{X_T})(X_{T})] \bigr) \cdot (X_T'- X_T) \bigr]
\\
&\hspace{15pt} + \EE \bigl[(\Igp_T-\Ig_T)\cdot Y_T \bigr]
\\
&\leq \gamma \EE\bigl[g(X_T',\PP_{X_T'})-g(X_T,\PP_{X_T})\bigr]  + \EE \bigl[(\Igp_T-\Ig_T)\cdot Y_T \bigr].
\end{split}
\end{equation*}
Identifying the two expressions right above and then repeating the proof of Theorem \ref{th:pontryagin}, we obtain
\begin{equation}
\label{eq:1:2:3}
\gamma J(\alpha') - \gamma J(\alpha) \geq 
\gamma \lambda \EE \int_{0}^T \vert \alpha_{t} - \alpha_{t}' \vert^2 dt + 
T_{0} - T_{2} + \EE \bigl[(\Ig_T-\Igp_T)\cdot Y_T \bigr]. 
\end{equation}
Now, we can reverse the roles of $\alpha$ and $\alpha'$ in \eqref{eq:1:2:3}. Denoting by $T_{0}'$ and $T_{2}'$ the 
corresponding terms in the inequality and then making the sum of both inequalities, we deduce that:
\begin{equation*}
2 \gamma \lambda \EE \int_{0}^T \vert \alpha_{t} - \alpha_{t}' \vert^2 dt
 + T_{0} + T_{0}' - (T_{2} + T_{2}') + \EE \bigl[(\Ig_T-\Igp_T)\cdot (Y_T-Y_{T}') \bigr] \leq 0.  
\end{equation*}
The sum $T_{2}+T_{2}'$ reads
\begin{equation*}
\begin{split}
&T_{2} + T_{2}' 
\\
&\hspace{15pt}= 
 \EE \int_0^T \bigl[
- (\If_{t}-\Ifp_{t}
)\cdot (X_{t} - X_{t}')
+  
(\Ib_{t} - \Ibp_{t} )
\cdot (Y_{t}  - Y_{t}' ) 
 + (\Is_{t} - \Isp_{t} )
 \cdot  (Z_t-Z_{t}') \bigr] dt.
\end{split}
\end{equation*}
Similarly,
\begin{equation*}
T_{0} + T_{0}' = - \EE \bigl[ (\xi - \xi') \cdot (Y_{0} - Y_{0}') \bigr]. 
\end{equation*}
Therefore, using Young's inequality, there exists a constant $C$ (the value of which may increase from line to line), $C$ being independent of $\gamma$, such that, for any $\varepsilon >0$,
\begin{equation}
\label{eq:2:2:3}
 \gamma \EE \int_{0}^T \vert \alpha_{t} - \alpha_{t}' \vert^2 dt
 \leq 
 \varepsilon 
  \| \Theta - \Theta^{\prime} \|_{\mathbb S}^2 
+ \frac{C}{\varepsilon} \bigl( \EE \bigl[ \vert \xi - \xi' \vert^2 \bigr] +
 \| {\mathcal I} - {\mathcal I}' \|_{\mathbb I}^2 \bigr).  
\end{equation}
Now, we observe by standard estimates for BSDEs that there exists a constant $C$, independent of $\gamma$, such that  
\begin{equation}
\label{eq:2:2:1}
\begin{split}
&\EE \biggl[ \sup_{0 \leq t \leq T} \vert Y_{t} - Y_{t}' \vert^2 + \int_{0}^T \vert Z_{t} - Z_{t}' \vert^2 dt \biggr]
\\
&\hspace{15pt} \leq C \gamma \EE \biggl[
\sup_{0 \leq t \leq T} \vert X_{t} - X_{t}' \vert^2
+  \int_{0}^T   \vert \alpha_{t} - \alpha_{t}' \vert^2  dt \biggr] + 
C \| {\mathcal I} - {\mathcal I}' \|_{\mathbb I}^2. 
\end{split}
\end{equation}
Similarly, 
\begin{equation}
\label{eq:2:2:2}
\EE \bigl[ \sup_{0 \leq t \leq T} \vert X_{t} - X_{t}' \vert^2  \bigr]
\leq \EE \bigl[ \vert \xi - \xi' \vert^2 \bigr] + C\gamma  \EE \int_{0}^T \vert \alpha_{t} - \alpha_{t}' \vert^2 dt + C 
\|   {\mathcal I} - {\mathcal I}'\|_{\mathbb I}^2. 
\end{equation}
From \eqref{eq:2:2:1} and \eqref{eq:2:2:2} and then from \eqref{eq:2:2:3}, we deduce that 
\begin{equation}
\begin{split}
&\EE \biggl[ \sup_{0 \leq t \leq T} \vert X_{t} - X_{t}' \vert^2 + \sup_{0 \leq t \leq T} \vert Y_{t} - Y_{t}' \vert^2 + \int_{0}^T \vert Z_{t} - Z_{t}' \vert^2 dt \biggr]
\\
&\hspace{15pt} \leq C\gamma  \EE \int_{0}^T \vert \alpha_{t} - \alpha_{t}' \vert^2 dt + C 
\bigl( \EE \bigl[ \vert \xi - \xi' \vert^2 \bigr] +
\|   {\mathcal I} - {\mathcal I}'\|_{\mathbb I}^2 \bigr)
\\
&\hspace{15pt} \leq  C \varepsilon 
  \| \Theta - \Theta^{\prime} \|_{\mathbb S}^2 
+ \frac{C}{\varepsilon} \bigl( \EE \bigl[ \vert \xi - \xi' \vert^2 \bigr] +
 \| {\mathcal I} - {\mathcal I}' \|_{\mathbb I}^2 \bigr). 
\end{split}
\end{equation}
Using the Lispchitz property of $\hat{\alpha}(t,\cdot,\cdot,\cdot,\cdot)$ and then choosing $\varepsilon$ small enough, we complete the proof.

\subsection{Decoupling Field}
The notion of decoupling field, also referred to as `FBSDE value function', plays a main role in the machinery of forward-backward equations, since it permits to represent the value $Y_{t}$
of the backward process at time $t$ as a function of the value $X_{t}$ of the forward process at time $t$. When the coefficients of the forward-backward equation are random, the decoupling field is a random field. When the coefficients are deterministic, the decoupling field is a deterministic function, which solves some corresponding partial differential equation. Here is the structure of the decoupling field in the McKean-Vlasov framework:

\begin{lemma}
\label{le:value function}
For any $t \in [0,T]$ and any $\xi \in L^2(\Omega,{\mathcal F}_{t},\PP;\RR^d)$, there exists a unique solution, denoted by 
$(X_{s}^{t,\xi},Y_{s}^{t,\xi},Z_{s}^{t,\xi})_{t \leq s\leq T}$, of \eqref{fo:mkv_fbsde} when set on $[t,T]$ 
with $X_{t}^{t,\xi}=\xi$ as initial condition. 

In this framework, for any $\mu \in {\mathcal P}_{2}(\RR^d)$, there exists a measurable mapping $u(t,\cdot,\mu) : \RR^d \ni x \mapsto  u(t,x,\mu)$ such that
\begin{equation}
\label{eq:27:2:11}
\PP \bigl( Y_{t}^{t,\xi} = u(t,\xi,\PP_{\xi}) \bigr) =1. 
\end{equation}
Moreover, there exists a constant $C$, only depending on the parameters in (B1-4), such that, for any $t \in [0,T]$ and any 
$\xi^1,\xi^2 \in L^2(\Omega,{\mathcal F}_{t},\PP;\RR^d)$,
\begin{equation}
\label{eq:27:1:2}
\EE \bigl[ \vert u(t,\xi^1,\PP_{\xi^1}) - u(t,\xi^2,\PP_{\xi^2}) \vert^2 \bigr] \leq C \EE \bigl[ \vert \xi^1 - \xi^2 \vert^2 \bigr]. 
\end{equation}
\end{lemma}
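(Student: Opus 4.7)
The plan is to address the three assertions of the lemma in sequence, leveraging the machinery already developed in this section. For the existence and uniqueness of $(X_{s}^{t,\xi},Y_{s}^{t,\xi},Z_{s}^{t,\xi})_{t \leq s \leq T}$, I would rerun the continuation argument behind Theorem \ref{th:sol:mkv} on the interval $[t,T]$ in place of $[0,T]$, with $\cF_{t}$ playing the role of $\cF_{0}$. All the constants produced by Lemmas \ref{le:fixed point} and \ref{le:stability} depend only on the parameters appearing in (B1-4) and on the horizon $T$, so they are uniformly controlled on sub-intervals and the entire argument transfers without change.

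The main task is then to construct the decoupling field $u(t,\cdot,\mu)$. My plan is to \emph{freeze} the flow of laws and thereby reduce the problem to a standard (non McKean--Vlasov) FBSDE. Fix $\mu \in \cP_{2}(\RR^{d})$; after enlarging $(\Omega,\cF_{0},\PP)$ if necessary, pick an $\cF_{t}$-measurable random variable $\xi^{*}$ with $\PP_{\xi^{*}}=\mu$, independent of $(W_{s}-W_{t})_{s \geq t}$. Let $\Theta^{*}$ be the corresponding extended solution of \eqref{fo:mkv_fbsde} on $[t,T]$, denote by $(\nu_{s})_{t \leq s \leq T}$ the flow of joint laws of $\Theta^{*}_{s}$, and by $(\mu_{s}=\PP_{X_{s}^{*}})_{t \leq s \leq T}$ its $X$-marginal. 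Consider the frozen FBSDE on $[t,T]$ obtained from \eqref{fo:mkv_fbsde} by substituting every occurrence of $\PP_{X_{s}}$ by $\mu_{s}$ and every expectation of the form $\tilde{\EE}[\partial_{\mu}H(s,\tilde{\Theta}_{s})(\,\cdot\,)]$ by the corresponding integral against $\nu_{s}$ (and similarly at the terminal time for $\partial_{\mu} g$). By (B3) together with Lemma \ref{le:5}, the resulting coefficients are Lipschitz in $(x,y,z)$ uniformly in $s \in [t,T]$, and (B4) ensures that the Peng--Wu monotonicity condition is preserved. The classical Peng--Wu continuation method applied to this genuine standard FBSDE then provides, for every deterministic initial condition $x \in \RR^{d}$ at time $t$, a unique solution, and I would set $u(t,x,\mu)$ to be the initial $Y$-value of that solution, which is Lipschitz continuous in $x$. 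To check that $Y_{t}^{t,\xi}=u(t,\xi,\mu)$ a.s.\ for any $\xi$ with $\PP_{\xi}=\mu$, I would observe that the extended solution of the McKean--Vlasov system starting from such a $\xi$ automatically has $(\nu_{s})$ as its flow of joint laws (by the McKean--Vlasov uniqueness obtained in step one, since that flow is determined by $\PP_{\xi}=\mu$ alone), so it also solves the frozen system; uniqueness for the frozen system then yields the desired identification.

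The Lipschitz estimate \eqref{eq:27:1:2} follows directly from Lemma \ref{le:stability} applied on $[t,T]$ with vanishing inputs, since
\begin{equation*}
\EE\bigl[|u(t,\xi^{1},\PP_{\xi^{1}})-u(t,\xi^{2},\PP_{\xi^{2}})|^{2}\bigr] = \EE\bigl[|Y_{t}^{t,\xi^{1}} - Y_{t}^{t,\xi^{2}}|^{2}\bigr]
\leq \|\Theta^{1}-\Theta^{2}\|_{\mathbb S}^{2} \leq C\, \EE\bigl[|\xi^{1}-\xi^{2}|^{2}\bigr].
\end{equation*}
The main technical obstacle that I anticipate lies in the freezing step: verifying that $(\mu_{s},\nu_{s})$ carry enough regularity in $s$ for the frozen system to genuinely qualify as a standard FBSDE with a deterministic Markovian decoupling field, and checking that the non-local backward drift coming from integration against $\nu_{s}$ does not destroy either the Lipschitz property or the monotonicity required by the Peng--Wu continuation method. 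Once those points are settled, the remaining pieces of the lemma are routine corollaries of the stability machinery built around Lemma \ref{le:stability}.
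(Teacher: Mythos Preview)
Your route differs from the paper's. Instead of freezing laws and building an auxiliary standard FBSDE, the paper argues directly: working with the augmented filtration generated by $\xi$ and $(W_s-W_t)_{s\ge t}$, the random variable $Y_t^{t,\xi}$ is $\sigma(\xi)$-measurable, so $Y_t^{t,\xi}=u_\xi(t,\xi)$ for some Borel $u_\xi(t,\cdot)$; then the Yamada--Watanabe theorem for FBSDEs (cited from \cite{Delarue02}) shows that the law of $(\xi,Y_t^{t,\xi})$ depends only on $\PP_\xi$, whence $u_\xi(t,\cdot)$ and $u_{\xi'}(t,\cdot)$ agree $\mu$-a.e.\ whenever $\PP_\xi=\PP_{\xi'}=\mu$. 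This defines $u(t,\cdot,\mu)$ as an element of $L^2(\RR^d,\mu)$ without any frozen system, and the Lipschitz bound \eqref{eq:27:1:2} comes from Lemma~\ref{le:stability} exactly as you say. Your freezing construction is a legitimate alternative and even yields a pointwise Lipschitz version of $u(t,\cdot,\mu)$ directly, but it is heavier: you must set up and solve a second FBSDE and check that the Peng--Wu monotonicity survives the freezing.

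There is, however, a gap in your argument at the identification step. You assert that ``the extended solution of the McKean--Vlasov system starting from such a $\xi$ automatically has $(\nu_s)$ as its flow of joint laws \ldots\ since that flow is determined by $\PP_\xi=\mu$ alone''. But pathwise uniqueness---which is all the continuation method delivers---does \emph{not} by itself imply that two solutions launched from distinct initial random variables $\xi,\xi^*$ sharing the same law have the same flow of marginal distributions; Lemma~\ref{le:stability} only controls $\|\Theta-\Theta^*\|_{\mathbb S}$ by $\EE[|\xi-\xi^*|^2]^{1/2}$, which need not vanish. The passage from strong to weak uniqueness is precisely Yamada--Watanabe, and it is the non-trivial ingredient the paper invokes explicitly. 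Once you add that justification your argument goes through; without it, the claim that the McKean--Vlasov solution from $\xi$ also solves your frozen system is unproved.
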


The proof is given right below. For the moment, we 
notice that the additional variable $\PP_{\xi}$ is for free in the above writing since we could set
$v(t,\cdot) = u(t,\cdot,\PP_{\xi})$ and then have $Y_{t}^{t,\xi} = v(t,\xi)$.
The additional variable $\PP_{\xi}$ is specified to emphasize the non-Markovian nature of the equation over the state space $\RR^d$: starting from two different initial conditions, the decoupling fields might not be the same, since the law of the initial conditions might be different. Keep indeed in mind that, in the Markovian framework, the decoupling field is the same for all possible initial conditions, thus yielding the connection with partial differential equations. Here the Markov property holds, but over the enlarged space $\RR^d \times {\mathcal P}_{2}(\RR^d)$, thus justifying the use of the extra variable $\PP_{\xi}$. 
Nevertheless, we often forget to specify the dependence upon $\PP_{\xi}$ in the sequel of the paper. 

An important fact is that the representation formula \eqref{eq:27:2:11} can be extended to the whole path:
\begin{proposition}
\label{prop:value function}
Under (B1-4), for any $\xi \in L^2(\Omega,{\mathcal F}_{0},\PP;\RR^d)$, there exists a measurable mapping $v: [0,T] \times \RR^d \rightarrow \RR^d$ such that
\begin{equation*}
\PP \bigl( \forall t \in [0,T], \ Y_{t}^{0,\xi} = v(t,X_{t}^{0,\xi}) \bigr) = 1.
\end{equation*}
It satisfies $\sup_{0 \leq t \leq T} \vert v(t,0) \vert < + \infty$. 
Moreover, 
there exists a constant $C$ such that 
$v(t,\cdot)$ is $C$-Lipschitz continuous for any $t \in [0,T]$.
\end{proposition}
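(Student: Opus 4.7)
The natural plan is to set $\mu_{t}:=\PP_{X_{t}^{0,\xi}}$ and define
\[
v(t,x) := u(t,x,\mu_{t}),
\]
where $u$ is the mapping supplied by Lemma \ref{le:value function} and where, for each fixed $t$, we use the representative of $x\mapsto u(t,x,\mu_{t})$ produced by Lemma \ref{le:5} applied to the $L^{2}$-stability bound \eqref{eq:27:1:2}. This choice immediately delivers a function $v(t,\cdot)$ that is $C$-Lipschitz with a constant independent of $t$. The control $\sup_{0\le t\le T}|v(t,0)|<+\infty$ then follows from the Lipschitz property combined with standard $L^{\infty}_{t}L^{2}_{\omega}$ bounds on $(Y_{t}^{0,\xi})$ obtained from the a priori estimates in $\mathbb{S}$.

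Fix now $t\in[0,T]$. By the uniqueness asserted in Lemma \ref{le:value function}, the restriction to $[t,T]$ of $(X^{0,\xi},Y^{0,\xi},Z^{0,\xi})$ coincides with $(X^{t,X_{t}^{0,\xi}},Y^{t,X_{t}^{0,\xi}},Z^{t,X_{t}^{0,\xi}})$; in particular the two forward marginal flows agree on $[t,T]$, so that $\PP_{X_{s}^{t,X_{t}^{0,\xi}}}=\mu_{s}$ for every $s\ge t$. Applying \eqref{eq:27:2:11} with initial condition $X_{t}^{0,\xi}$ at time $t$, we deduce
\[
Y_{t}^{0,\xi} \;=\; Y_{t}^{t,X_{t}^{0,\xi}} \;=\; u\bigl(t,X_{t}^{0,\xi},\mu_{t}\bigr) \;=\; v\bigl(t,X_{t}^{0,\xi}\bigr) \qquad \PP\text{-a.s.},
\]
which gives the desired representation pointwise in $t$.

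The real content of the proposition is the passage from this pointwise-in-$t$ statement to the simultaneous one. The route I would take is to fix a countable dense $D\subset[0,T]$, intersect the null sets over $D$ to obtain an event of full measure on which $Y_{t}^{0,\xi}=v(t,X_{t}^{0,\xi})$ holds for every $t\in D$, and then extend to all $t\in[0,T]$ by continuity, using that $(Y_{t}^{0,\xi})$ and $(X_{t}^{0,\xi})$ have continuous sample paths. For this extension to succeed I need $t\mapsto v(t,X_{t}^{0,\xi})$ to be continuous $\PP$-a.s. Since $v(t,\cdot)$ is Lipschitz uniformly in $t$ and $X^{0,\xi}$ is path-continuous, this reduces to the continuity of $t\mapsto v(t,x)$ at each fixed $x$, which in turn is obtained from the joint continuity of $(s,\mu)\mapsto u(s,x,\mu)$ at $(t,\mu_{t})$: the continuity in $\mu$ follows from a stability estimate analogous to Lemma \ref{le:stability} applied between the FBSDE driven by $(\mu_{s})$ on $[t_{n},T]$ started at $x$ and the one on $[t,T]$ started at $x$, exploiting that $W_{2}(\mu_{t_{n}},\mu_{t})\to 0$ by continuity of $(X_{t}^{0,\xi})$ in $L^{2}$.

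The main obstacle is precisely this last step: the McKean--Vlasov decoupling field lives naturally on the enlarged state space $\RR^d\times\cP_{2}(\RR^d)$, and one must carefully exploit that the flow $(\mu_{t})$ is frozen once $\xi$ is fixed in order to project down to a function $v$ on $\RR^d$ with enough time-regularity to paste the countably many $\PP$-a.s. identities into a single pathwise identity. The delicate point is that \eqref{eq:27:1:2} is only an $L^{2}$-type estimate, so one must rely on the Lipschitz modification of $v(t,\cdot)$ (via Lemma \ref{le:5}) to upgrade such bounds into the pointwise-in-$x$ continuity statements needed for the extension argument.
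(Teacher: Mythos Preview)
Your overall scaffolding---define $v(t,\cdot)$ as the Lipschitz modification of $u(t,\cdot,\mu_t)$ via Lemma \ref{le:5}, establish the identity $Y_t=v(t,X_t)$ for each fixed $t$ via the flow property, then pass to all $t$ through a countable dense set---matches the paper. The divergence is in how the extension to all $t$ is carried out.

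The paper does \emph{not} attempt to prove that $t\mapsto v(t,x)$ is continuous. Instead it \emph{constructs} $v$ so that the limiting argument succeeds automatically: with $w(t,\cdot)$ the Lipschitz version of $u(t,\cdot,\mu_t)$, it sets $v^n(t,x)=w(kT/2^n,x)$ for $t\in((k-1)T/2^n,kT/2^n]$ and defines $v(t,x)$ as the coordinatewise $\limsup_n v^n(t,x)$. This gives joint Borel measurability on $[0,T]\times\RR^d$ for free, preserves the uniform Lipschitz property in $x$, and ensures $v=w$ on the dyadic grid ${\mathbb D}$. On the full-measure event where $Y_t=v(t,X_t)$ holds for all $t\in{\mathbb D}$, one writes $Y_t=\lim_n Y_{t_n}=\lim_n v(t_n,X_{t_n})=\lim_n v(t_n,X_t)$ (uniform Lipschitz in $x$, path continuity of $X$); but $v(t_n,X_t)=v^n(t,X_t)$ by construction, so the sequence converges and its limit is $v(t,X_t)$. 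No time regularity of $v$ is invoked.

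Your route asks for more: pointwise continuity of $t\mapsto v(t,x)$, to be extracted from an $L^2$ stability estimate in the initial time. Two gaps. First, Lemma \ref{le:stability} gives stability in $(\xi,{\mathcal I})$ at a fixed $\gamma$, not in the initial time; you would need to prove a separate time-shift estimate. Second, even granting a bound of the form $\EE[|u(t,\zeta,\mu_t)-u(s,\zeta,\mu_s)|^2]\to 0$ as $s\to t$, you cannot conclude $v(s,x)\to v(t,x)$ pointwise: the Lipschitz modification of Lemma \ref{le:5} is performed for each $(s,\mu_s)$ separately, with no a priori consistency between the chosen representatives at different times. Arranging that consistency is exactly what the paper's $\limsup$ construction does in one stroke. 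Finally, your definition $v(t,x)=u(t,x,\mu_t)$ does not by itself produce a jointly measurable function of $(t,x)$; the paper's construction handles this automatically.
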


We start with
\begin{proof}[Proof of Lemma \ref{le:value function}]
Given $t \in [0,T)$ and $\xi \in L^2(\Omega,{\mathcal F}_{t},\PP;\RR^d)$, existence and uniqueness to \eqref{fo:mkv_fbsde} when set on $[t,T]$ with $\xi$ as initial condition is a direct consequence of Theorem \ref{th:sol:mkv} (or, more precisely, of the proof of it since we are handling a random initial condition). Using as underlying filtration the augmented filtration 
${\mathbb F}^t$ generated by $\xi$ and by $(W_{s}-W_{t})_{t \leq s \leq T}$, we deduce that $Y_{t}^{t,\xi}$ coincides a.s. with a $\sigma(\xi)$-measurable $\RR^d$-valued random variable. In particular, there exists a measurable function $u_{\xi}(t,\cdot) : \RR^d \rightarrow \RR^d$ such that $\PP (Y_{t}^{t,\xi} = u_{\xi}(t,\xi))=1$. 

We now claim that the law of 
$(\xi,Y_{t}^{t,\xi})$ only depends upon the law of $\xi$. This directly follows from the version of the Yamada-Watanabe theorem for FBSDEs, see \cite{Delarue02}: Since uniqueness holds pathwise, it also holds in law so that, given two initial conditions with the same law, the solutions also have the same laws.  
Therefore, given another $\RR^d$-valued random vector $\xi'$ with the same law as $\xi$, it holds $(\xi,u_{\xi}(t,\xi)) \sim (\xi',u_{\xi'}(t,\xi'))$. In particular, for any measurable function $v : \RR^d \rightarrow \RR^d$, the random variables $u_{\xi}(t,\xi) - v(\xi)$ and $u_{\xi'}(t,\xi') - v(\xi')$ have the same law. Choosing $v=u_{\xi}(t,\cdot)$, we deduce that $u_{\xi'}(t,\cdot)$ and $u_{\xi}(t,\cdot)$ are a.e. equal under the probability measure $\PP_{\xi}$. Put it differently, denoting by $\mu$ the law of $\xi$, there exists an element $u(t,\cdot,\mu) \in L^2(\RR^d,\mu)$ such that $u_{\xi}(t,\cdot)$ and $u_{\xi'}(t,\cdot)$ coincide $\mu$ a.e. with $u(t,\cdot,\mu)$. Identifying $u(t,\cdot,\mu)$ with one of its version, this proves that 
\begin{equation*}
\PP \bigl( Y_{t}^{t,\xi} = u(t,\xi,\mu) \bigr) = 1. 
\end{equation*}
When $t>0$, we notice that, for any $\mu \in {\mathcal P}_{2}(\RR^d)$, there exists an ${\mathcal F}_{t}$-measurable random variable $\xi$ such that $\mu = \PP_{\xi}$. In such a case, the procedure we just described permits to define
$u(t,\cdot,\mu)$ for any $\mu \in {\mathcal P}_{2}(\RR^d)$. The situation may be different when $t=0$ as ${\mathcal F}_{0}$ may reduce to events of measure zero or one. In such a case, ${\mathcal F}_{0}$ can be enlarged without any loss of generality in order to support $\RR^d$-valued random variables of any arbitrarily prescribed distribution. 

The Lipschitz property 
\eqref{eq:27:1:2}
of $u(0,\cdot,\cdot)$ is a direct consequence of 
Lemma \ref{le:stability} with $\gamma=1$. By a time shift, the same argument applies to $u(t,\cdot,\cdot)$. 
\end{proof}

We now turn to
\begin{proof}[Proof of Proposition \ref{prop:value function}]
For simplicity, we just denote $(X_{t}^{0,\xi},Y_{t}^{0,\xi},Z_{t}^{0,\xi})_{0 \leq t \leq T}$
by $(X_{t},Y_{t},Z_{t})_{0 \leq t \leq T}$.
The proof is then a combination of Lemmas
\ref{le:5}
and \ref{le:value function}.  
Indeed, given $t \in (0,T]$, Lemma 
\ref{le:value function} says that the family $(u(t,\cdot,\mu))_{\mu \in {\mathcal P}_{2}(\RR^d)}$
satisfies \eqref{eq:28:2:5} since any $\mu \in {\mathcal P}_{2}(\RR^d)$ can be seen as the law of some ${\mathcal F}_{t}$-measurable random vector $\zeta$. Therefore, for $\mu=\PP_{X_{t}}$, we can find a mapping $w(t,\cdot)$ that is $C$-Lipschitz continuous (for the same $C$ as in \eqref{eq:27:1:2}) and that 
 coincides with $u(t,\cdot,\PP_{X_{t}})$ a.e. under the probability measure $\PP_{X_{t}}$. It satisfies
 \begin{equation}
 \label{eq:5:2:1}
 \forall t \in [0,T], \quad \PP \bigl( Y_{t} = w(t,X_{t}) \bigr) = 1, 
 \end{equation}
 since $Y_{t}=Y_{t}^{t,X_{t}}$.  In particular,
 \begin{equation}
 \label{eq:29:1:1}
 \begin{split}
\sup_{0 \leq t \leq T} \vert w(t,0) \vert &\leq  
 \sup_{0 \leq t \leq T} {\mathbb E} \bigl[ \vert Y_{t} \vert \bigr] +  \sup_{0 \leq t \leq T} 
 {\mathbb E} \bigl[\vert w(t,X_{t}) - w(t,0) \vert \bigr]
 \\
 &\leq \sup_{0 \leq t \leq T} {\mathbb E} \bigl[ \vert Y_{t} \vert \bigr] + C \sup_{0 \leq t \leq T} 
 {\mathbb E} \bigl[\vert X_{t} \bigr] < + \infty.  
 \end{split}
 \end{equation}

For any integer $n \geq 1$, we then let
\begin{equation*}
v^n(t,x) =  {\mathbf 1}_{[0,T/2^n]}(t) w\bigl( \frac{T}{2^n},x \bigr) + 
\sum_{k=2}^{2^n} {\mathbf 1}_{((k-1)T/2^n,kT/2^n]}(t) w\bigl( \frac{kT}{2^n},x \bigr), \quad t \in [0,T], \ x \in \RR^d. 
\end{equation*}
Denoting by $v^{n,i}$ the $i$th coordinate of $v^n$ for any $i \in \{1,\dots,d\}$, we also let
\begin{equation*}
v^i(t,x) = \limsup_{n \rightarrow + \infty} v^{n,i}(t,x), \quad t \in [0,T], \ x \in \RR^d, 
\end{equation*}
and then $v(t,x)= (v^1(t,x),\dots,v^d(t,x))$. As each of the $v^n$ is a Borel measurable function on $[0,T] \times \RR^d$, so is $v$. Similarly, $v$ satisfies \eqref{eq:29:1:1} and, for any $t \in [0,T]$, $v(t,\cdot)$ is $C$-Lipschitz continuous. 

Finally, we notice that, for any $t \in {\mathbb D}_{n} = \{k T/2^n, \ k \in \{1,\dots,2^n\}\}$, with $n \in {\mathbb N} \setminus \{0\}$, and for $\ell \geq n$,
\begin{equation*}
v^{\ell}(t,\cdot) = w(t,\cdot) = v(t,\cdot), 
\end{equation*}
so that, $w(t,\cdot)=v(t,\cdot)$ for any $t \in {\mathbb D} = \cup_{n \geq 1} {\mathbb D}_{n}$. Therefore,  
${\mathbb D}$ being countable, we deduce from \eqref{eq:5:2:1} that
the event 
\begin{equation*}
A = \bigl\{\omega \in \Omega : \forall t \in {\mathbb D}, Y_{t}(\omega) = w\bigl(t,X_{t}(\omega)\bigr) = v\bigl(t,X_{t}(\omega) \bigr) \bigr\}
\end{equation*}
has measure $\PP(A)=1$. On the event $A$, we notice that, for any $t \in (0,T]$,
\begin{equation*}
Y_{t} = \lim_{n \rightarrow + \infty} Y_{t_{n}} = \lim_{n \rightarrow + \infty} v(t_{n},X_{t_{n}}),
\end{equation*}
where $(t_{n})_{n \geq 1}$ is the sequence of points in ${\mathbb D}$ such that, for any $n \geq 1$, 
$t_{n} \in {\mathbb D}_{n}$ and $t_{n}-T/2^n < t \leq t_{n}$.  
Since $v(t_{n},\cdot)$ is $C$-Lipschitz continuous, we deduce that, on $A$,
\begin{equation*}
Y_{t} = \lim_{n \rightarrow + \infty} v(t_{n},X_{t}),
\end{equation*}
which is to say that the sequence $(v(t_{n},X_{t}))_{n \geq 1}$ is convergent. Now we observe that $v(t_{n},X_{t})$ is also 
$v_{n}(t,X_{t})$.
Therefore, the limit must coincide with 
$v(t,X_{t})$. This proves that, on the event $A$, $Y_{t} = v(t,X_{t})$ for any $t \in (0,T]$. By the same argument, the same holds true at $t=0$ ($0$ being handled apart for questions of notation since the definition of $v_{n}$ at time $0$ is rather specific).  
\end{proof}

\section{\textbf{Propagation of Chaos and Approximate Equilibrium}}
\label{se:chaos}

In this section, we show how the solution of the optimal control of stochastic dynamics of the McKean-Vlasov type can be used to
handle $N$-player games when $N$ tends to $+\infty$. 

Throughout this section,  assumptions (B1-4) are in force. 
For each integer $N\ge 1$, we consider a stochastic system whose time evolution is given by a system of $N$ coupled stochastic differential equations of the form
\begin{equation}
\label{fo:stateN}
dU_{t}^i = b \bigl(t,U_{t}^i,\bar{\nu}_t^{N},\beta_t^i \bigr) dt + \sigma(t,U_{t}^i,\bar{\nu}_{t}^N,\beta_{t}^i) dW_{t}^i, \quad 1 \leq i \leq N; 
 \qquad 
\bar{\nu}^N_{t} = \frac{1}{N} \sum_{j=1}^N \delta_{U_{t}^j},
\end{equation}
with $t \in [0,T]$ and $U_{0}^i = x_{0}$, $1 \leq i \leq N$. Here $((\beta_{t}^i)_{0 \leq t \leq T})_{1 \leq i \leq N}$ are $N$ $\RR^k$-valued processes that are progressively measurable with respect to the filtration generated by $(W^1,\dots,W^N)$ and have finite $L^2$ norms over $[0,T] \times \Omega$:
\begin{equation*}
\forall i \in \{1, \dots, N\}, \quad \EE \int_{0}^T \vert \beta_{t}^i \vert^2 dt < + \infty, 
\end{equation*}
where, for convenience, we have fixed an infinite sequence  $((W_{t}^i)_{0 \leq t \leq T})_{i\ge 1}$ of independent $m$-dimensional Brownian motions. 
One should think of $U^i_t$ as the (private) state at time $t$ of agent or player $i\in\{1,\cdots,N\}$,  $\beta^i_t$ being the action taken at time $t$ by player $i$. For each $1 \leq i \leq N$, we denote by
\begin{equation}
\label{fo:costN}
J^{N,i}(\beta^1,\dots,\beta^N) =  {\mathbb E}
\biggl[ g\bigl(U_{T}^i,\bar{\nu}_{T}^N
\bigr) + \int_{0}^T f(t,U_{t}^i,\bar{\nu}_{t}^N,\beta_{t}^i) dt \biggr],
\end{equation}
the cost to the $i$th player. We then recover the same set-up as in the case of the mean field game models studied in \cite{CarmonaDelarue_sicon}. Anyhow, the rule we apply for minimizing the cost is a bit different. The point is indeed to minimize the cost over exchangeable strategies: when the strategy $\u\beta=(\beta^1, \cdots, \beta^N)$ is exchangeable, the costs to all the players are the same and thus read as a common cost $J^{N,i}(\u \beta)=J^N(\u \beta)$. From a practical point of view, restricting the minimization to exchangeable strategies  means that the players are intended to obey a common policy, which is not the case in the standard mean field game approach.  

In this framework, one of our goal is to
compute the limit
\begin{equation*}
\lim_{N \rightarrow + \infty} \inf_{\u \beta} J^N(\u \beta),
\end{equation*}
the infimum being taken over exchangeable strategies. Another one is to identify, for each integer $N$, a specific set of $\varepsilon$-optimal strategies and the corresponding state evolutions. 
\subsection{Limit of the Costs and Non-Markovian Approximate Equilibriums}
Recall that we denote by $J$ the optimal cost:
\begin{equation}
\label{eq:optimalcost}
J = \EE \biggl[ g(X_{T},\mu_{T}) + \int_{0}^T f\bigl(t,X_{t},\mu_{t},\hat{\alpha}(t,X_{t},\mu_{t},Y_{t},Z_{t})\bigr) dt \biggr],
\end{equation} 
where $(X_{t},Y_{t},Z_{t})_{0 \leq t \leq T}$ is the solution to \eqref{fo:mkv_fbsde} with $X_{0}=x_{0}$ as initial condition, $(\mu_{t})_{0 \leq t \leq T}$ denoting the flow of marginal probability measures $\mu_t=\PP_{X_{t}}$, for $0 \leq t \leq T$. 

For the purpose of comparison, we introduce $(\bar X^1,\cdots,\bar X^N)$, each $\bar{X}^i$ standing for the solution of the forward equation in \eqref{fo:mkv_fbsde} when driven by the Brownian motion $W^i$. Put it differently, $(\bar X^1,\cdots,\bar X^N)$ solves the system 
\eqref{fo:stateN} when the empirical distribution $\bar\nu^N_t$ is replaced by $\mu_t$ and $\beta^i_{t}$ is given by
$\beta_{t}^i = \bar{\alpha}_{t}^i$ with
\begin{equation*}
\bar{\alpha}_{t}^i = \hat{\alpha}(t,\bar{X}_{t}^i,\mu_{t},\bar{Y}_{t}^i,\bar{Z}_{t}^i),
\end{equation*}
the pair $(\bar{Y}^i,\bar{Z}^i)$ solving the backward equation in \eqref{fo:mkv_fbsde} when driven by $W^i$. Pay attention that the processes $((\bar{\Theta}_{t}^i = (\bar{X}_{t}^i,\mu_{t},\bar{ Y}_{t}^i, \bar{Z}_{t}^i, \bar{\alpha}_{t}^i))_{0 \leq t \leq T})_{1 \leq i \leq N}$ are independent. 
\vspace{4pt}

Here is the first result:
\begin{theorem}
\label{th:limit cost}
Under assumptions (B1-4), 
\begin{equation*}
\lim_{N \rightarrow + \infty} \inf_{\u \beta} J^N( \u \beta) = J, 
\end{equation*}
the infimum being taken over exchangeable (square integrable) strategies $\u \beta=(\beta^1,\cdots,\beta^N)$.  
Moreover, the non-Markovian control $\underline{\bar{\alpha}}=(\bar{\alpha}^1,\cdots,\bar{\alpha}^N)$ is an approximate 
optimal control in the sense that 
\begin{equation*}
\lim_{N \rightarrow + \infty} J^N( \underline{\bar{\alpha}}) = J. 
\end{equation*}
\end{theorem}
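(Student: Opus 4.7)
The plan is to establish the two halves $\limsup_{N \to \infty} \inf_{\underline{\beta}} J^N(\underline{\beta}) \leq J$ and $\liminf_{N \to \infty} \inf_{\underline{\beta}} J^N(\underline{\beta}) \geq J$ separately. The $\limsup$ inequality will follow at once from the second assertion of the theorem, $J^N(\underline{\bar{\alpha}}) \to J$, because $\underline{\bar{\alpha}}$ is an exchangeable admissible strategy. The $\liminf$ inequality requires an extra symmetrisation step that reduces any exchangeable $\underline{\beta}$ to an effective one-particle McKean--Vlasov problem whose cost is bounded below by $J$.

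For the convergence $J^N(\underline{\bar{\alpha}}) \to J$, I would first observe that the processes $((\bar{X}^i,\bar{Y}^i,\bar{Z}^i,\bar{\alpha}^i))_{1\le i \le N}$ are i.i.d.\ copies of the solution of the McKean--Vlasov FBSDE \eqref{fo:mkv_fbsde}, driven by the independent Brownian motions $W^i$. Plugging $\underline{\bar{\alpha}}$ into \eqref{fo:stateN} yields coupled states $U^i$; subtracting the two systems and using the Lipschitz assumption (A2) on $b,\sigma$ together with Burkholder--Davis--Gundy and Gr\"onwall's inequality gives
\begin{equation*}
\EE \Bigl[ \sup_{0 \leq s \leq t} \vert U^i_{s}-\bar{X}^i_{s}\vert^2 \Bigr]
\leq C \int_{0}^{t} \EE\bigl[ W_{2}^{2}(\bar{\nu}^N_{s},\mu_{s}) \bigr]\,ds.
\end{equation*}
Splitting $W_{2}(\bar{\nu}^N_{s},\mu_{s}) \leq W_{2}(\bar{\nu}^N_{s},\hat{\nu}^N_{s})+ W_{2}(\hat{\nu}^N_{s},\mu_{s})$ with $\hat{\nu}^N_{s}=N^{-1}\sum_{j}\delta_{\bar{X}^j_{s}}$, the first summand is bounded by $(N^{-1}\sum_{j}\vert U^j_{s}-\bar{X}^j_{s}\vert^2)^{1/2}$, whereas Remark \ref{re:convergence} combined with the uniform $L^2$-moment bound \eqref{fo:moment} gives $\EE[W_{2}^{2}(\hat{\nu}^N_{s},\mu_{s})] \to 0$ uniformly in $s\in[0,T]$. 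A second Gr\"onwall pass then yields $\sup_{i}\EE[\sup_{t}\vert U^i_{t}-\bar{X}^i_{t}\vert^2] \to 0$. Transferring this to the cost via the local Lipschitz bounds of (B2) and uniform moments delivers $J^N(\underline{\bar{\alpha}})\to J$ and hence $\limsup \inf J^N \leq J$.

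For the lower bound, pick a sequence of exchangeable admissible strategies $\underline{\beta}^N$ with $J^N(\underline{\beta}^N)\to L:=\liminf_{N}\inf_{\underline{\beta}} J^N(\underline{\beta})$, and let $(U^{N,i})_{i}$ be the corresponding states. By exchangeability
\begin{equation*}
J^N(\underline{\beta}^N) = \EE\Bigl[ g(U^{N,1}_{T},\bar{\nu}^N_{T}) + \int_{0}^{T} f(t,U^{N,1}_{t},\bar{\nu}^N_{t},\beta^{N,1}_{t})\,dt \Bigr].
\end{equation*}
Introduce the auxiliary McKean--Vlasov process $\tilde{X}^{N,1}$ driven by $W^1$, controlled by $\beta^{N,1}$, and coupled with the flow of laws $(\PP_{U^{N,1}_{t}})_{0\le t\le T}$ (note that $\beta^{N,1}$ is an admissible control for the McKean--Vlasov problem \eqref{fo:mkvsde}). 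The Lipschitz assumption (A2) and Gr\"onwall give
\begin{equation*}
\EE\Bigl[\sup_{0\le t\le T} \vert U^{N,1}_{t}-\tilde{X}^{N,1}_{t}\vert^2 \Bigr] \leq C \int_{0}^{T}\EE\bigl[W_{2}^{2}(\bar{\nu}^N_{t},\PP_{U^{N,1}_{t}})\bigr]\,dt,
\end{equation*}
and the key step is to show that the right-hand side tends to zero. Once this is known, the Lipschitz bounds of (B2) transfer the convergence to the cost, yielding $J^N(\underline{\beta}^N)-\tilde{J}^N \to 0$, where $\tilde{J}^N := \EE[g(\tilde{X}^{N,1}_{T},\PP_{\tilde{X}^{N,1}_{T}}) + \int_{0}^{T} f(t,\tilde{X}^{N,1}_{t},\PP_{\tilde{X}^{N,1}_{t}},\beta^{N,1}_{t})\,dt] \geq J$ by optimality of $J$ for the McKean--Vlasov problem. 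Hence $L\ge J$.

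The main obstacle is the estimate $\EE[W_{2}^{2}(\bar{\nu}^N_{t},\PP_{U^{N,1}_{t}})]\to 0$. Unlike in the upper bound, one cannot invoke Remark \ref{re:convergence} directly because the $U^{N,i}$'s are coupled through both the controls and the empirical measure itself. I would exploit the fact that the triangular array $(U^{N,i})_{1\le i\le N}$ is exchangeable to write the Wasserstein error as the $L^2$-distance between the empirical measure of an exchangeable sample and its common marginal; a conditional Hewitt--Savage/de~Finetti-type decomposition, combined with the uniform $L^2$ bounds that one can extract on $(U^{N,i})_{i}$ from (B1) (as $b,\sigma$ have at most linear growth and the $\beta^{N,i}$'s are uniformly bounded in $\HH^{2,k}$ since $J^N(\underline{\beta}^N)$ is bounded), yields the required rate. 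All the other steps---Gr\"onwall, BDG, local Lipschitz transfer from states to costs---are standard once this estimate is in hand.
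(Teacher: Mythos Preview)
Your upper-bound argument (the second assertion, $J^N(\underline{\bar{\alpha}})\to J$) is essentially the paper's argument and is fine.

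The lower bound, however, has a real gap. The central step you flag as ``the main obstacle'' --- proving that $\EE[W_2^2(\bar{\nu}^N_t,\PP_{U^{N,1}_t})]\to 0$ for an \emph{arbitrary} exchangeable strategy $\underline{\beta}^N$ --- is not true in general, and de~Finetti/Hewitt--Savage does not rescue it. An exchangeable array need not have an empirical measure close to its one-particle marginal: the de~Finetti decomposition says the empirical measure concentrates near a \emph{random} directing measure, whose expectation is the marginal, but the two can be far apart in $W_2$. Concretely, since $\beta^{N,i}_t$ is allowed to be measurable with respect to the full filtration of $(W^1,\dots,W^N)$, nothing prevents strategies such as $\beta^{N,i}_t=h(N^{-1/2}\sum_j W^j_t)$, which are exchangeable yet make the particles highly correlated; then $\bar{\nu}^N_t$ stays random in the limit while $\PP_{U^{N,1}_t}$ is deterministic. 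A second issue is the auxiliary process $\tilde{X}^{N,1}$: as written it is driven by the flow $\PP_{U^{N,1}_t}$, but $\tilde{J}^N\ge J$ requires $\tilde{X}^{N,1}$ to solve the \emph{self-consistent} McKean--Vlasov SDE with its own law, and $\beta^{N,1}$ is not even adapted to the single Brownian filtration used to define $J$.

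The paper bypasses all of this: it does \emph{not} try to show that the $N$-player system under an arbitrary $\underline{\beta}$ is close to some McKean--Vlasov evolution. Instead it compares $J^N(\underline{\beta})$ to $J$ directly via the Pontryagin maximum principle. One writes $J^N(\underline{\beta})-J = N^{-1}\sum_i(T_1^i+T_2^i)$, expands $\EE[(U^i_T-\bar X^i_T)\cdot \bar Y^i_T]$ by It\^o's formula, and uses the convexity of $g$ and of $H$ in $(x,\mu,\alpha)$ (assumption (B4)) together with the optimality condition $\partial_\alpha H(\cdot,\bar\alpha^i)=0$. The only approximations needed are replacements of $\mu_t$ by the empirical measure $\bar{\mu}^N_t$ of the \emph{i.i.d.} reference particles $\bar{X}^i$, for which Remark~\ref{re:convergence} applies cleanly. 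This yields the quantitative lower bound $J^N(\underline{\beta})\ge J - C\,\ell_N(d)$ uniformly over all exchangeable $\underline{\beta}$, with no propagation-of-chaos estimate required for the $U^i$'s themselves.
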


\begin{proof}
The proof consists in comparing $J^N(\u \beta)$ to $J$ for a given exchangeable strategy $\u {\beta}$. Once again, it relies on a variant of the stochastic maximum principle exposed in Section \ref{se:pontryagin}. With the above notation, we indeed obtain
\begin{equation*}
J^N(\u \beta)-J
=\EE \bigl[ g(U^i_{T},\bar{\nu}^N_{T}) - g(\bX_{T}^i,\mu_{T}) \bigr]
+ \EE \biggl[ \int_{0}^T \bigl( f(s,U_{s}^i,\bar{\nu}_{s}^N,\beta_{s}^i)  
- f(s,\bX_{s}^i,\mu_{s},\balpha_{s}^{i})  \bigr) ds \biggr],
\end{equation*}
the identity holding true for any $1 \leq i \leq N$. Therefore, we can write
\begin{equation}
\label{eq:28:2:30}
J^N(\u \beta)-J
= T_{1}^i + T_{2}^i,
\end{equation} 
with
\begin{equation*}
\begin{split}
T_{1}^i &= \EE \bigl[( U^i_{T}-\bX^i_{T}) \cdot \bar{Y}^i_{T}  \bigr]
+ \EE \biggl[ \int_{0}^T \bigl( f(s,U_{s}^i,\bar{\nu}_{s}^N,\beta_{s}^i)  - f(s,\bX_{s}^i,\mu_{s},\balpha_{s}^{i})  \bigr) ds \biggr],
\\
T_{2}^i &= \EE \bigl[ g(U^i_{T},\bar{\nu}_{T}^N) - g(\bX_{T}^i,\mu_{T}) \bigr] 
 - \EE \bigl[ (U_{T}^i - \bX_{T}^i)\cdot \partial_{x} g(\bX_{T}^i,\mu_{T}) \bigr]
\\
&\phantom{??????????}-  \EE \t\EE \bigl[ (\t U_{T}^{i} - \t{\bX}_{T}^{i})\cdot \partial_{\mu} g(\bX_{T}^i,\mu_{T})(\t{\bX}_{T}^{i}) \bigr]
\\
&= T_{2,1}^i - T_{2,2}^i - T_{2,3}^i,
\end{split}
\end{equation*}
where 
we used Fubini's theorem with the independent copies denoted with a tilde `\ $\tilde{\cdot}$\ '. 

\subsubsection*{Analysis of $T_{2}^i$} Using the diffusive effect of independence, we claim
\begin{equation*}
\begin{split}
T_{2,3}^i &= \EE  \t\EE \bigl[ (\t U_{T}^{i} - \t{\bX}_{T}^{i}) \cdot \partial_{\mu} g(\bX_{T}^i,\mu_{T})(\t{\bX}_{T}^{i}) \bigr]
\\
&= \frac{1}{N} \sum_{j=1}^N 
\t\EE \bigl[ (\t U_{T}^{i} - \t{\bX}_{T}^{i}) \cdot \partial_{\mu} g(\t{\bX}_{T}^{j},\mu_{T})(\t{\bX}_{T}^{i})\bigr]
\\
&\hspace{15pt} + 
{\mathcal O} \biggl( \t\EE \bigl[ \vert \t U_{T}^{i} - \t{\bX}_{T}^{i}\vert^2 \bigr]^{1/2}  \t\EE \biggl[ \biggl\vert \frac{1}{N} \sum_{j=1}^N 
\partial_{\mu} g(\t{\bX}_{T}^{j},\mu_{T})(\t{\bX}_{T}^{i})- 
\EE \bigl[ \partial_{\mu} g(\bar{X}_{T}^{i},\mu_{T})(\t{\bX}_{T}^{i}) \bigr]
\biggr\vert^2 \biggr]^{1/2} \biggr)
\\
&= \frac{1}{N} \sum_{j=1}^N 
\EE \bigl[ ( U_{T}^{i} - \bX_{T}^{i})\cdot \partial_{\mu} g\bigl(\bX_{T}^{j},\mu_{T})(\bX_{T}^{i})
  \bigr] +  {\mathbb E} \bigl[ \vert U_{T}^{i} - {\bX}_{T}^{i}
\vert^2 \bigr]^{1/2}  {\mathcal O}(N^{-1/2}),
\end{split}
\end{equation*}
where ${\mathcal O}(\cdot)$ stands for the Landau notation. 
Therefore, taking advantage of the exchangeability in order to handle the remainder, we obtain
\begin{equation*}
\frac{1}{N} \sum_{i=1}^N T_{2,3}^i
= \frac{1}{N^2} \sum_{j=1}^N \sum_{i=1}^N 
\EE \bigl[ ( U_{T}^{i} - \bX_{T}^{i}) \cdot \partial_{\mu} g\bigl(\bX_{T}^{j},\mu_{T})(\bX_{T}^{i})
\bigr] +  {\mathbb E} \bigl[ \vert U_{T}^{1} - \bX_{T}^{1}
\vert^2 \bigr]^{1/2}  {\mathcal O}(N^{-1/2}).
\end{equation*}
Introducing a random variable $\vartheta$ from $(\tilde{\Omega},\tilde{\mathcal F},\tilde{\PP})$ into $\RR$ with uniform distribution on $\{1,\dots,N\}$ as done in the proof of Proposition \ref{pr:diff_empir}, we can write
\begin{equation*}
\frac{1}{N} \sum_{i=1}^N T_{2,3}^i
= \frac{1}{N} \sum_{j=1}^N  
\EE \tilde{\EE} \bigl[ (U_{T}^{\vartheta} - \bX_{T}^{\vartheta})\cdot \partial_{\mu} g\bigl(\bX_{T}^{j},\mu_{T})(\bX_{T}^{\vartheta})
 \bigr] +  {\mathbb E} \bigl[ \vert U_{T}^{1} - \bX_{T}^{1}
\vert^2 \bigr]^{1/2}  {\mathcal O}(N^{-1/2}).
\end{equation*}
Finally, defining the flow of empirical measures
\begin{equation*}
\bar{\mu}^N_{t} = \frac{1}{N} \sum_{j=1}^N \delta_{\bar{X}_{t}^j}, \quad t \in [0,T],
\end{equation*}
and using (B3), Propositions \ref{pr:diff_empir}
and \ref{prop:14:3:1}  and also Remark \ref{re:convergence} to estimate the distance $W_2(\bar\mu^N_T,\mu_T)$, the above estimate gives:
\begin{equation*}
\frac{1}{N} \sum_{i=1}^N T_{2,3}^i
= \frac{1}{N} \sum_{j=1}^N  
\EE \t\EE\bigl[ (U_{T}^{\vartheta} - \bX_{T}^{\vartheta})\cdot \partial_{\mu} g\bigl(\bX_{T}^{j},\bar{\mu}_{T}^N)(\bX_{T}^{\vartheta})
 \bigr] +  {\mathbb E} \bigl[ \vert U_{T}^{1} - \bX_{T}^{1}
\vert^2 \bigr]^{1/2}  {\mathcal O}(\ell_{N}(d)),
\end{equation*}
where we used the notation $\ell_{N}(d)$ for any function of $N$ which could be used as an upper bound for:
\begin{equation}
\label{eq:3:2:1}
\EE \bigl[ W_{2}^2(\bar{\mu}^N_{T},\mu_{T}) \bigr]^{1/2} + \biggl( \int_{0}^T 
\EE \bigl[ W_{2}^2(\bar{\mu}^N_{t},\mu_{t}) \bigr] dt \biggr)^{1/2} = {\mathcal O} (\ell_{N}(d)). 
\end{equation}
By Remark \ref{re:convergence}, the left-hand side tends to $0$ as $N$ tends to $+ \infty$, since the function
$[0,T] \ni t \mapsto \EE[ W_{2}^2(\bar{\mu}^N_{t},\mu_{t})]$ can be bounded independently of $N$. Therefore, $(\ell_{N}(d))_{N \geq 1}$ is always chosen as a sequence that converges to $0$ as $N$ tends to $+ \infty$. 
When 
$\sup_{0 \leq t \leq T} \vert \bar{X}_{t}^1 \vert$ has finite moment of order $d+5$, Remark \ref{re:convergence} says that 
$\ell_{N}(d)$ can be chosen as $N^{-1/(d+4)}$. In any case, we will assume that $\ell_{N}(d) \geq N^{-1/2}$. Going back to \eqref{eq:28:2:30},
\begin{equation*}
\begin{split}
\frac{1}{N} \sum_{i=1}^N T_{2}^i &= \frac{1}{N} \sum_{i=1}^N  
\biggl\{ \EE \bigl[ g(U^i_{T},\bar{\nu}_{T}^N) - g(\bX_{T}^i,\bar{\mu}_{T}^N) \bigr] 
- \EE \bigl[ (U_{T}^i - \bX_{T}^i)\cdot \partial_{x} g(\bX_{T}^i,\bar{\mu}_{T}^N)  \bigr]
\\
&\hspace{15pt} - \EE \t \EE \bigl[ (U_{T}^{\vartheta} - \bX_{T}^{\vartheta})\cdot \partial_{\mu}g
\bigl(\bX_{T}^{i},\bar{\mu}_{T}^N)(\bX_{T}^{\vartheta}) \bigr]
\biggr\} +  \bigl( 1 +  {\mathbb E} \bigl[ \vert U_{T}^{1} - \bX_{T}^{1}
\vert^2 \bigr]^{1/2} \bigr) {\mathcal O}(\ell_{N}(d)),
\end{split}
\end{equation*}
where we used local Lipschitz property of $g$ and Remark \ref{re:convergence} to replace $\mu_{T}$ by $\bar{\mu}^N_{T}$.

Noting that a.s. under $\PP$, the law of $U^{\vartheta}_{T}$ (resp. $\bar{X}^{\vartheta}_{T}$) under $\t \PP$ is the empirical distribution 
$\bar{\nu}_{T}^N$ (resp. $\bar{\mu}_{T}^N$), we can apply the convexity property of $g$, see
\eqref{fo:convexity}, to get
\begin{equation}
\label{eq:14:3:3}
\frac{1}{N} \sum_{i=1}^N T_{2}^i \geq  - \bigl( 1 + {\mathbb E} \bigl[ \vert U_{T}^{1} - \bX_{T}^{1}
\vert^2 \bigr]^{1/2} \bigr) {\mathcal O}( \ell_{N}(d)).
\end{equation}

\subsubsection*{Analysis of $T_{1}^i$} Using It\^o's formula and Fubini's theorem, we obtain
\begin{equation}
\label{eq:14:3:5}
\begin{split}
T_{1}^i &=  \EE \biggl[ \int_{0}^T 
\bigl( H(s,U_{s}^i,\bar{\nu}_{s}^N,\bar{Y}^i_s,\bar{Z}_{s}^i,\beta_{s}^i)  
- H(s,\bX_{s}^i,\mu_{s},\bar{Y}^i_s,\bar{Z}_{s}^i,\balpha_{s}^{i})  \bigr) ds \biggr]
\\
&\hspace{15pt}
- \EE \biggl[ \int_{0}^T (U_{s}^i - \bX_{s}^i)\cdot \partial_{x} H(s,\bX_{s}^i,\mu_{s},\bar{Y}_{s}^i,
\bar{Z}_{s}^i,\balpha_{s}^{i})
ds \biggr]
\\
&\hspace{15pt}
- \EE \t\EE \biggl[ \int_{0}^T (\t U_{s}^{i} - \t{\bX}_{s}^{i})\cdot 
\partial_{\mu} H(s,\bX_{s}^i,\mu_{s},\bar{Y}_{s}^i,\bar{Z}_{s}^i,\balpha_{s}^{i})(\t{\bX}_{s}^{i})
ds \biggr]
\\
&= T_{1,1}^i - T_{1,2}^i - T_{1,3}^i. 
\end{split}
\end{equation}
Using the local Lipschitz property of the Hamiltonian and \eqref{eq:3:2:1} and recalling that the limit process
$(\bar{X}^i_{t},\mu_{t},\bar{Y}^i_{t},\bar{Z}^i_{t},\bar{\alpha}_{t}^i)_{0 \leq t \leq T}$
has finite ${\mathbb S}$-norm (see \eqref{eq:norm S}), we get:
\begin{equation*}
T_{1,1}^i =  \EE \biggl[ \int_{0}^T 
\bigl(  H(s,U_{s}^i,\bar{\nu}_{s}^N,\bar{Y}^i_s,\bar{Z}_{s}^i,\beta_{s}^i)  
- H(s,\bX_{s}^i,\bar{\mu}_{s}^N,\bar{Y}^i_s,\bar{Z}_{s}^i,\balpha_{s}^{i})  \bigr) ds \biggr] + {\mathcal O}(\ell_{N}(d)).
\end{equation*}
Similarly, by exchangeability:
\begin{equation*}
\begin{split}
T_{1,2}^i
&= 
\EE \biggl[ \int_{0}^T 
( U_{s}^i - \bX_{s}^i)\cdot \partial_{x} H(s,\bX_{s}^i,\bar{\mu}_{s}^N,\bar{Y}_{s}^i,\bar{Z}_{s}^i,\balpha_{s}^{i})
 ds \biggr]  
+  \biggl(\EE \int_{0}^T \vert  U_{s}^i - \bX_{s}^i \vert^2 ds \biggr
 )^{1/2} {\mathcal O}(\ell_{N}(d)).
\end{split}
\end{equation*}
Finally, using the diffusive effect of independence, we have
\begin{equation}
\label{eq:30:1:2}
\begin{split}
\frac{1}{N} \sum_{i=1}^N T_{1,3}^i
&= \frac{1}{N} \sum_{i=1}^N \EE \t\EE \biggl[ \int_{0}^T ( U_{s}^{i} - {\bX}_{s}^{i})\cdot 
\partial_{\mu} H(s,\t\bX_{s}^i,\mu_{s},\tilde{\bar{Y}}_{s}^i,\tilde{\bar{Z}}_{s}^i,\t \balpha_{s}^{i})({\bX}_{s}^{i}) ds \biggr]
\\
&= \frac{1}{N^2} \sum_{j=1}^N \sum_{i=1}^N \EE \biggl[ \int_{0}^T ( U_{s}^{i} -\bX_{s}^{i})\cdot 
\partial_{\mu} H(s,{\bX}_{s}^j,\mu_{s}, \bar{Y}_{s}^j,\bar{Z}_{s}^j,{\balpha}_{s}^{j})(\bX_{s}^{i}) ds \biggr]\\
&\hspace{30pt}
+ \biggl( \EE \int_{0}^T \vert  U_{s}^{1} - \bX_{s}^{1} \vert^2 ds \biggr
 )^{1/2} {\mathcal O}(N^{-1/2}).
\end{split}
\end{equation}
By (B3), Propositions \ref{pr:diff_empir} and \ref{prop:14:3:1}, we have
\begin{equation*}
\begin{split}
\frac{1}{N} \sum_{i=1}^N T_{1,3}^i
&= \frac{1}{N}  \sum_{i=1}^N \EE \t\EE \biggl[ \int_{0}^T (U_{s}^{\vartheta} -\bX_{s}^{\vartheta})\cdot 
\partial_{\mu} H(s,\bX_{s}^{i},\mu_{s},\bar{Y}_{s}^{i},\bar{Z}_{s}^i,\balpha_{s}^{i})(\bX_{s}^{\vartheta}) ds \biggr]
\\
&\hspace{30pt}
+ \biggl( \EE \int_{0}^T \vert  U_{s}^{1}  - \bX_s^1 \vert^2 ds \biggr)^{1/2} {\mathcal O}(N^{-1/2})
\\
&= \frac{1}{N} \sum_{i=1}^N \EE \t \EE \biggl[ \int_{0}^T (U_{s}^{\vartheta} -\bX_{s}^{\vartheta})\cdot 
\partial_{\mu} H(s,\bX_{s}^{i},\bar{\mu}^N_{s},\bar{Y}_{s}^{i},\bar{Z}_{s}^i,\balpha_{s}^{i})(\bX_{s}^{\vartheta}) ds \biggr]
\\
&\hspace{30pt}
+ \biggl( \EE \int_{0}^T \vert  U_{s}^{1} - \bX_{s}^1 \vert^2 ds \biggr)^{1/2} {\mathcal O}(\ell_{N}(d)).
 \end{split}
 \end{equation*} 
In order to complete the proof, we evaluate the missing term in the Taylor expansion of $T_{1}^i$ in \eqref{eq:14:3:5}, namely
\begin{equation*}
\frac{1}{N} \sum_{i=1}^N \EE \biggl[ \int_{0}^T 
( \beta^i_{s} - \balpha_{s}^{i})\cdot \partial_{\alpha} H(s,\bX_{s}^i,\bar{\mu}^N_{s},\bar{Y}_{s}^i,\bar{Z}_{s}^i,\balpha_{s}^{i})
ds \biggr],
\end{equation*}
in order to benefit from the convexity of $H$. We use 
Remark \ref{re:convergence} once more:
\begin{equation}
\label{eq:30:1:1}
\begin{split}
&\EE \biggl[ \int_{0}^T 
(\beta^i_{s} - \balpha_{s}^{i}) \cdot \partial_{\alpha} H(s,\bX_{s}^i,\bar{\mu}_{s}^N,\bar{Y}_{s}^i,\bar{Z}_{s}^i,
\balpha_{s}^{i}) ds \biggr]
\\
&=  \EE \biggl[ \int_{0}^T 
(\beta^i_{s} - \balpha_{s}^{i}) \cdot \partial_{\alpha} H(s,\bX_{s}^i,\mu_{s},\bar{Y}_{s}^i,\bar{Z}_{s}^i,\balpha_{s}^{i}) ds \biggr] 
+  \biggl(  \EE \int_{0}^T \vert  \beta_{s}^i - \balpha_{s}^{i}  \vert^2 ds \biggr)^{1/2} {\mathcal O}(\ell_{N}(d))
\\
&=   \biggl(  \EE \int_{0}^T \vert  \beta_{s}^i - \balpha_{s}^{i}  \vert^2 ds \biggr)^{1/2} {\mathcal O}(\ell_{N}(d)),
\end{split}
\end{equation}
since $\bar{\alpha}$ is an optimizer for $H$.
Using the convexity of $H$ and taking advantage of the exchangeability, we finally deduce from 
\eqref{eq:14:3:5},
\eqref{eq:30:1:2} and \eqref{eq:30:1:1} that there exists a constant $c>0$ such that
\begin{equation*}
\begin{split}
\frac{1}{N} \sum_{i=1}^N T_{1}^i &\geq 
 c {\mathbb E} \int_{0}^T \vert  \beta_{s}^1 - \balpha_{s}^1 \vert^2 ds 
 \\
&\hspace{15pt} - 
{\mathcal O}(\ell_{N}(d)) \biggl( 1+ \sup_{0 \leq t \leq T} \EE  \bigl[ \vert  U_{t}^1 - \bX_{t}^1 \vert^2 \bigr] 
+ \EE \int_{0}^T  \vert  \beta_{s}^1 - \balpha_{s}^1   \vert^2  ds \biggr)^{1/2}.
\end{split}
\end{equation*}
By \eqref{eq:14:3:3} and \eqref{eq:28:2:30}, we deduce that 
\begin{equation*}
\begin{split}
J^N( \u \beta) &\geq J+ c {\mathbb E} \int_{0}^T \vert  \beta_{s}^1 - \balpha_{s}^1 \vert^2 ds 
\\
&\hspace{15pt}
- {\mathcal O}(\ell_{N}(d)) \biggl(  1+\sup_{0 \leq t \leq T} \EE  \bigl[ \vert  U_{t}^1 -\bX_{t}^1 \vert^2 \bigr] 
+ \EE \int_{0}^T  \vert  \beta_{s}^1 - \balpha_{s}^1  \vert^2  ds \biggr)^{1/2}.
\end{split}
\end{equation*}
 From the inequality
 \begin{equation*}
  \sup_{0 \leq t \leq T} \EE \bigl[\vert  U_{t}^1 -\bX_{t}^1 \vert^2 \bigr]  \leq C  \EE \int_{0}^T 
  \vert  \beta_{s}^1 - \balpha_{s}^1  \vert^2 ds, 
 \end{equation*}
which holds for some constant $C$ independent of $N$,
we deduce that  
\begin{equation}
\label{eq:3:2:2}
J^N(\u \beta) \geq J - C  \ell_{N}(d),
 \end{equation}
 for a possibly new value of $C$, which proves that 
 \begin{equation*}
 \liminf_{N \rightarrow + \infty} \inf_{\u \beta} J^N (\u \beta) \geq J. 
 \end{equation*}
 
In order to prove Theorem \ref{th:limit cost}, it thus remains to find a sequence of controls $(\u \beta^N)_{N \geq 1}$ such that 
 \begin{equation*}
 \limsup_{N \rightarrow + \infty} J^N (\u \beta^N) \leq J. 
 \end{equation*}
 Precisely, we show right below that 
 \begin{equation*}
 \limsup_{N \rightarrow + \infty} J^N (\underline{\bar{\alpha}}) \leq J, 
 \end{equation*}
 thus proving that $\underline{\bar{\alpha}} = (\bar{\alpha}^1,\dots,\bar{\alpha}^N)$ is an approximate equilibrium, but of non-Markovian type. Denoting by $(X^1,\dots,X^N)$ the solution of \eqref{fo:stateN} with $\beta^i_{t} = \bar{\alpha}^i_{t}$, classical estimates from the theory of propagation of chaos imply (see e.g. \cite{Sznitman} or \cite{JourdainMeleardWoyczynski}) that 
\begin{equation*}
 \sup_{0 \leq t \leq T} \EE \bigl[ \vert X^i_{t} - \bar{X}_{t}^i \vert^2 \bigr] =  \sup_{0 \leq t \leq T} \EE \bigl[ \vert X^1_{t} - \bar{X}_{t}^1 \vert^2 \bigr] = {\mathcal O}(N^{-1}).
\end{equation*}
It is then plain to deduce that 
\begin{equation*}
\limsup_{N \rightarrow + \infty} J^N(\underline{ \bar{\alpha}}) \leq J.
\end{equation*}
 This completes the proof.
\end{proof}

\subsection{Approximate Equilibriums with Distributed Closed Loop Controls}
When $\sigma$ doesn't depend upon $\alpha$, we are able to provide an approximate equilibrium using only distributed controls in closed loop form. This is of real interest from the practical point of view. Indeed, in a such case, the optimizer $\hat{\alpha}$ of the Hamiltonian, as defined in 
\eqref{fo:alphahat}, doesn't depend on $z$. It thus reads as $\hat{\alpha}(t,x,\mu,y)$.
By Proposition \ref{prop:value function}, this says that the optimal control
$(\alpha_{t})_{0 \leq t \leq T}$ in Theorem \ref{th:sol:mkv} has the \emph{feedback} form:
\begin{equation}
\label{eq:3:2:3}
\alpha_{t} = \hat{\alpha}(t,X_{t},\mu_{t},v(t,X_{t})), \quad t \in [0,T].
\end{equation}
The reader might object that, also in the case when $\sigma$ depends upon $\alpha$, the process $Z_{t}$ at time $t$ is also expected to read as a function 
of $X_{t}$, since such a representation is known to hold in the classical \emph{decoupled} forward-backward setting. Even if we feel that it is indeed possible to prove such a representation in our more general setting, we must address the following points: $(i)$ From a practical point of view, \eqref{eq:3:2:3} is meaningful if the feedback function is Lipschitz-continuous, as the Lipschitz property ensures that the stochastic differential equation obtained by plugging \eqref{eq:3:2:3} into the forward equation in \eqref{fo:mkv_fbsde} is solvable; $(ii)$ In the current framework, the function $v$ is known to be Lipschitz continuous by Proposition \ref{prop:value function}, but proving the same result for the representation of $Z_{t}$ in terms of $X_{t}$ seems to be really challenging (by the way, it is already challenging in the standard case, i.e. without any McKean-Vlasov interaction); $(iii)$ We finally mention that, in any case, the relationship between $Z_{t}$ and $X_{t}$, if exists, must be rather intricate as $Z_{t}$ is expected to solve the equation $Z_{t} = \partial_{x} v(t,X_{t}) \sigma(t,X_{t},\PP_{X_{t}},\hat{\alpha}(t,X_{t},\PP_{X_{t}},Y_{t},Z_{t}))$, which can be formally derived by identifying martingale integrands when expanding $Y_{t}=v(t,X_{t})$ by a formal application of It\^o's formula. This equation has been investigated in \cite{WuZhen} in the standard case, but we feel more convenient not to repeat this analysis in the current setting in order to keep things at a reasonable level of complexity. 
\vspace{4pt}

Now, for each integer $N$, we can consider the solution $(X_{t}^1,\dots,X_{t}^N)_{0 \leq t \leq T}$ of the system of $N$ stochastic differential equations
\begin{equation}
\label{fo:gameN}
dX_{t}^i = b\bigl(t,X_{t}^i,\mu^N_{t},\hat{\alpha}\bigl(t,X_{t}^i,\mu_{t},v(t,X_{t}^i)\bigr) \bigr) dt + \sigma(t,X_{t}^i,\mu_{t}^N) dW_{t}^i, 
 \qquad \mu^N_{t} = \frac{1}{N} \sum_{j=1}^N \delta_{X_{t}^j},
\end{equation}
with $t \in [0,T]$ and $X_{0}^i = x_{0}$. The system  (\ref{fo:gameN})  is well posed since  $v$ satisfies 
Proposition \ref{prop:value function} and the minimizer $\hat{\alpha}(t,x,\mu_{t},y)$ is Lipschitz continuous and at most of linear growth in the variables $x$, $\mu$ and $y$, uniformly in $t \in [0,T]$.
The processes $(X^i)_{1 \leq i \leq N}$ give the dynamics of the private states of the $N$ players in the stochastic differential game of interest when the players use the strategies
\begin{equation}
\label{fo:alphaNi}
\alpha_{t}^{N,i} = \hat{\alpha}(t,X_{t}^i,\mu_{t},v(t,X_{t}^i)),\qquad 0\le t\le T,\;\; i\in\{1,\cdots,N\}.
\end{equation}
These strategies are in closed loop form. They are even \emph{distributed} since, at each time $t \in [0,T]$, a player only needs to know the state of his own private state in order to compute the value of the action to take at that time. By the linear growth of $v$ and of the minimizer $\hat\alpha$,  it holds, for any $p \geq 2$,
\begin{equation}
\label{eq:9:6:1}
\sup_{N \geq 1} \max_{1 \leq i \leq N} \bigl[ {\mathbb E} \bigl[ \sup_{0 \leq t \leq T} \vert X_{t}^i \vert^{p} \bigr]
\bigr]< + \infty,
\end{equation}
the expectation inside the brackets being actually independent of $i$ since the strategy is obviously exchangeable. 

We then have the approximate equilibrium property:

\begin{theorem}
\label{th:equilibrium}
In addition to assumptions (B1-4), assume that $\sigma$ doesn't depend upon $\alpha$. Then, 
\begin{equation*}
J^N(\u \beta) \geq J^N(\u \alpha^N) - {\mathcal O}(N^{-1/(d+4)}),
\end{equation*}
for any exchangeable $\u \beta=(\beta^1,\cdots,\beta^N)$, where $\alpha$ is defined in \eqref{fo:alphaNi}. 
\end{theorem}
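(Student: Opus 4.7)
The plan is to obtain the claim by combining two estimates, both of rate $N^{-1/(d+4)}$: a lower bound $J^N(\u \beta) \geq J - C N^{-1/(d+4)}$, valid for every exchangeable $\u \beta$, and a matching upper bound $J^N(\u \alpha^N) \leq J + C N^{-1/(d+4)}$ for the distributed closed-loop strategy. The first estimate is exactly inequality \eqref{eq:3:2:2} already proved in Theorem \ref{th:limit cost}; the only point to check is that $\ell_{N}(d)$ may be taken as $N^{-1/(d+4)}$. This follows from Remark \ref{re:convergence}: under (B1-4) with deterministic initial condition $x_{0}$, the optimal state process $\bar X^1$ has moments of any order, in particular of order $d+5$, so \eqref{fo:horowitz} applies to the empirical measure $\bar \mu^N$ of $(\bar X^i)_{i \leq N}$.

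For the upper bound, I would carry out a propagation-of-chaos comparison between the $N$-particle system \eqref{fo:gameN} driven by the closed-loop controls $\alpha^{N,i}_t = \hat \alpha(t, X^i_t, \mu_t, v(t, X^i_t))$ and the tensorized limit system $(\bar X^i)_{1 \leq i \leq N}$ defined before Theorem \ref{th:limit cost}. Since $\sigma$ does not depend on $\alpha$, $\hat \alpha(t,x,\mu,y,z)$ is independent of $z$, and Proposition \ref{prop:value function} yields $\bar Y^i_t = v(t, \bar X^i_t)$ almost surely, so that $\bar \alpha^i_t = \hat \alpha(t, \bar X^i_t, \mu_t, v(t, \bar X^i_t))$ has exactly the same feedback form as $\alpha^{N,i}$. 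The two SDEs for $X^i$ and $\bar X^i$ are then driven by the same Brownian motion $W^i$ and differ only through the $\mu$-argument of $b$ and $\sigma$: $X^i$ plugs in the empirical measure $\mu^N_t$ while $\bar X^i$ plugs in $\mu_t$. Using the Lipschitz property of $b$ and $\sigma$ in the Wasserstein metric (from (B1)), of $v(t,\cdot)$ (from Proposition \ref{prop:value function}), and of $\hat \alpha(t,\cdot,\cdot,\cdot)$ in $(x,\mu,y)$ (established at \eqref{eq:17:3:5} and the surrounding paragraph), together with Gronwall's lemma, one gets
\begin{equation*}
\sup_{0 \leq t \leq T} \EE \bigl[ \vert X^1_t - \bar X^1_t \vert^2 \bigr] \leq C \int_0^T \EE \bigl[ W_{2}^2(\mu^N_s, \mu_s) \bigr] ds,
\end{equation*}
by exchangeability. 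Splitting $W_{2}(\mu^N_s, \mu_s) \leq W_{2}(\mu^N_s, \bar \mu^N_s) + W_{2}(\bar \mu^N_s, \mu_s)$, controlling the first piece by $(1/N) \sum_{i=1}^N \vert X^i_s - \bar X^i_s \vert^2$ and the second by $C N^{-2/(d+4)}$ thanks to Remark \ref{re:convergence} applied to the i.i.d. sample $(\bar X^i_s)_{i \leq N}$, a second application of Gronwall yields $\sup_t \EE[\vert X^1_t - \bar X^1_t \vert^2] \leq C N^{-2/(d+4)}$.

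To conclude, I would write $J^N(\u \alpha^N) - J$ as the expectation of the differences of $f$ and $g$ evaluated along $(X^1,\mu^N,\alpha^{N,1})$ versus $(\bar X^1,\mu,\bar \alpha^1)$, and invoke the local Lipschitz property of $f$ and $g$ from (B2) together with the uniform moment estimates \eqref{eq:9:6:1} and its analogue for $\bar X^1$. Cauchy--Schwarz and the Wasserstein bounds just obtained give $\vert J^N(\u \alpha^N) - J \vert \leq C N^{-1/(d+4)}$. Combined with the lower bound from the previous paragraph, this gives $J^N(\u \beta) \geq J - C N^{-1/(d+4)} \geq J^N(\u \alpha^N) - 2 C N^{-1/(d+4)}$, which is the desired conclusion. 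The main obstacles are two-fold: first, one must ensure that the Lipschitz regularity of $\hat \alpha$ in the measure variable (shown in \eqref{eq:17:3:5}) and of the decoupling field $v$ (Proposition \ref{prop:value function}) are strong enough to close the Gronwall loop \emph{uniformly in $N$}; second, the dimension-dependent rate $N^{-1/(d+4)}$ is dictated by the Wasserstein convergence of i.i.d. empirical measures in $\RR^d$, and cannot be improved without extra structure on $\mu_t$.
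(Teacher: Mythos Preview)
Your proposal is correct and follows the same two-step strategy as the paper: the lower bound $J^N(\u\beta)\geq J-C\,N^{-1/(d+4)}$ is \eqref{eq:3:2:2} with $\ell_N(d)=N^{-1/(d+4)}$ (justified, as you say, by the feedback form of $\bar\alpha^1$, which gives $\bar X^1$ moments of all orders via the linear growth of $v$ and $\hat\alpha$), and the upper bound $J^N(\u\alpha^N)\leq J+C\,N^{-1/(d+4)}$ comes from a propagation-of-chaos comparison between $(X^i)$ and $(\bar X^i)$. The only noticeable difference is that the paper obtains the sharper intermediate estimate $\sup_t\EE[|X^1_t-\bar X^1_t|^2]=O(N^{-1})$ by invoking the classical Sznitman-type result directly---this rate is available because under (B1) the coefficients $b$ and $\sigma$ depend on $\mu$ only through its mean $\bar\mu$---whereas your general $W_2$-splitting argument yields only $O(N^{-2/(d+4)})$; either way the final rate is governed by the Wasserstein convergence of the empirical measures appearing in the cost, so both suffice.
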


\begin{proof} We use the same notations as in the proof of Theorem \ref{th:limit cost}.

Since $\bar{\alpha}^1_{t}$ now reads as $\hat{\alpha}(t,\bar{X}_{t}^1,\mu_{t},v(t,\bar{X}_{t}^1))$ for $0 \leq t \leq T$,
we first notice, by the growth property of $v$, that 
$\EE[\sup_{0 \leq t \leq T} \vert \bar{X}_{t}^1 \vert^p] < +\infty$ for any $p \geq 1$. 
As mentioned in 
\eqref{fo:horowitz} in Remark \ref{re:convergence}, this says that  
$\ell_{N}(d)$ in the lower bound 
\begin{equation*}
J^N(\u \beta) \geq J - C  \ell_{N}(d),
 \end{equation*}
see \eqref{eq:3:2:2},
can be chosen as $N^{-1/(d+4)}$.  
  
Moreover, since $v(t,\cdot)$ is Lipschitz continuous, using once again classical estimates from the theory of propagation of chaos (see e.g. \cite{Sznitman} or \cite{JourdainMeleardWoyczynski}), we also have
\begin{equation*}
 \sup_{0 \leq t \leq T} \EE \bigl[ \vert X^i_{t} - \bar{X}_{t}^i \vert^2 \bigr] =  \sup_{0 \leq t \leq T} \EE \bigl[ \vert X^1_{t} - \bar{X}_{t}^1 \vert^2 \bigr] = {\mathcal O}(N^{-1}),
\end{equation*}
so that 
\begin{equation*}
 \sup_{0 \leq t \leq T} \EE \bigl[ \vert \alpha^{N,i}_{t} - \bar{\alpha}_{t}^{i} \vert^2 \bigr]  =  \sup_{0 \leq t \leq T} \EE \bigl[ \vert \alpha^{N,1}_{t} - \bar{\alpha}_{t}^{1} \vert^2 \bigr] = {\mathcal O}(N^{-1}),
\end{equation*}
for any $1 \leq i \leq N$.
It is then plain to deduce that 
\begin{equation*}
J^N(\alpha^N) \leq J + C \ell_{N}(d).
\end{equation*}
This completes the proof.
\end{proof}

\bibliographystyle{plain}
\bibliography{games}

\begin{thebibliography}{10}

\bibitem{AmbroglioGigliSavare}
L.~Ambrosio, N.~Gigli, and G.~Savar{\'{e}}.
\newblock {\em Gradient flows in metric spaces and in the Wasserstein space of
  probability measures}.
\newblock Birkh{\"{a}}user, 2004.

\bibitem{AndersonDjehiche}
D.~Andersson and B.~Djehiche.
\newblock A maximum principle for sdes of mean-field type.
\newblock {\em Applied Mathematics \& Optimization}, 63 (3):341--356, 2010.

\bibitem{Bensoussanetal}
A.~Bensoussan, K.C.J. Sung, S.C.P. Yam, and S.P. Yung.
\newblock Linear quadratic mean field games.
\newblock Technical report, 2011.

\bibitem{BuckdahnDjehicheLiPeng}
R.~Buckdahn, B.~Djehiche, J.~Li, and S.~Peng.
\newblock Mean-field backward stochastic differential equations: A limit
  approach.
\newblock {\em The Annals of Probability}, 37:1524--1565, 2009.

\bibitem{Cardaliaguet}
P.~Cardaliaguet.
\newblock Notes on mean field games.
\newblock Technical report, 2010.

\bibitem{CarmonaDelarue_ecp}
R.~Carmona and F.~Delarue.
\newblock Mean field forward-backward stochastic differential equations.
\newblock Technical report.

\bibitem{CarmonaDelarue_sicon}
R.~Carmona and F.~Delarue.
\newblock Probabilistic analysis of mean field games.
\newblock Technical report, 2012.

\bibitem{CarmonaDelarueLaChapelle}
R.~Carmona, F.~Delarue, and A.~Lachapelle.
\newblock Control of {M}c{K}ean-{V}lasov versus {M}ean {F}ield {G}ames.
\newblock {\em Mathematical Financial Economics}, page to appear, 2012.

\bibitem{Delarue02}
F.~Delarue.
\newblock On the existence and uniqueness of solutions to {FBSDE}s in a
  non-degenerate case.
\newblock {\em Stochastic Processes and Applications}, 99:209--286, 2002.

\bibitem{JourdainMeleardWoyczynski}
B.~Jourdain, S.~Meleard, and W.~Woyczynski.
\newblock Nonlinear {S}{D}{E}s driven by {L}\'evy processes and related
  {P}{D}{E}s.
\newblock {\em ALEA, Latin American Journal of Probability}, 4:1--29, 2008.

\bibitem{McKean1}
H.P. McKean.
\newblock A class of markov processes associated with nonlinear parabolic
  equations.
\newblock {\em Proceedings of the National Academy of Science}, 56:1907--1911,
  1966.

\bibitem{McKean2}
H.P. McKean.
\newblock Propagation of chaos for a class of nonlinear parabolic equations.
\newblock {\em Lecture Series in Differential Equations}, 7:41--57, 1967.

\bibitem{PardouxPeng1990}
E.~Pardoux and S.~Peng.
\newblock Adapted solution of a backward stochastic differential equation.
\newblock {\em Systems Control Letters}, pages 55--61, 1990.

\bibitem{PengWu}
S.~Peng and Z.~Wu.
\newblock Fully coupled forward-backward stochastic differential equations and
  applications to optimal control.
\newblock {\em SIAM Journal on Control and Optimization}, 37:825--843, 1999.

\bibitem{RachevRuschendorf}
S.T. Rachev and L.~Ruschendorf.
\newblock {\em Mass Transportation Problems I: Theory}.
\newblock Springer Verlag, 1998.

\bibitem{Sznitman}
A.S. Sznitman.
\newblock Topics in propagation of chaos.
\newblock In {\em D. L. Burkholder et al. , Ecole de Probabilit\'es de Saint
  Flour, XIX-1989}, volume 1464 of {\em Lecture Notes in Mathematics}, pages
  165--251, 1989.

\bibitem{Villani}
C.~Villani.
\newblock {\em Optimal Transport, Old and New}.
\newblock Springer Verlag, 2009.

\bibitem{WuZhen}
Z.~Wu and Z.~Yu.
\newblock Probabilistic interpretation for systems of parabolic partial
  differential equations combined with algebra equations.
\newblock Technical report, 2010.

\end{thebibliography}

\end{document}